\newcommand{\comment}[1]{}
\newtheorem{theorem}{Theorem}
\newtheorem{assumption}{Assumption}
\newtheorem{corollary}{Corollary}
\newtheorem{definition}{Definition}
\newtheorem{lemma}{Lemma}
\newtheorem{proposition}{Proposition}
\newtheorem{remark}{Remark}
\newcommand{\bee}{\begin{equation}}
\newcommand{\eee}{\end{equation}}
\newcommand{\bea}{\begin{eqnarray}}
\newcommand{\eea}{\end{eqnarray}}
\newcommand{\bean}{\begin{eqnarray*}}
\newcommand{\eean}{\end{eqnarray*}}
\begin{document}
\nocite{*}

\title{ Scaling limit for stochastic control problems in population dynamics}
\author{ Paul Jusselin\footnote{paul.jusselin@polytechnique.edu}~~and Thibaut Mastrolia\footnote{thibaut.mastrolia@polytechnique.edu},\\
		\'Ecole Polytechnique, CMAP
		 }
\date{\today }
\maketitle

\begin{abstract}
\noindent Going from a scaling approach for birth/death processes, we investigate the scaling limit of solutions to non-Markovian stochastic control problems by studying the convergence of solutions to BSDEs driven a sequence of converging martingales. In particular we manage to describe how the values and optimal controls of control problems converge when the models converge towards a continuous population model.
\end{abstract}

\vspace{1em}

\noindent{\bf Key words:} stochastic control, population models, birth and death processes, backward stochastic differential equation (BSDE), stability of BSDEs, martingale properties.

\section{Introduction}
The sustainability of natural resources has become a major subject of interest in the last decades for public institutions. For instance, in 1983 the European Union has launched its common fisheries policy to manage European fish stocks. In August 2010, a report of European commission named \textit{Water Scarcity and Drought in the European Union,} has emphasized that "an adequate supply of good-quality water is a pre-requisite for economic and social progress, so we need to do two things: we must learn to save water, and also to manage our available resources more efficiently".  A large part of academic literature has dealt with such issues. For example, Reed in \cite{reed1979optimal}, Clarke and Kirkwood in \cite{clark1986uncertain}, Regnier and De Lara in \cite{regnier2015robust} or Tromer and Doyen in \cite{tromeur2019optimal} have studied the exploitation of a natural resource under uncertainty on its evolution in a multi-period model. May, Beddington, Horwood and Sherpherd in \cite{may1978exploiting} have considered the problem by assuming that the intrinsic population growth rate is given by the difference between recruitment and mortality for general recruitment functions. These models have been extended to stochastic differential equations driven by a Brownian motion (see for instance the work of Saphores \cite{saphores2003harvesting}). Evans, Hening and Shreiber in \cite{evans2015protected} or more recently Kharroubi, Lim and Ly Vath in \cite{lim2018optimal} have modelled the dynamic of the natural resource as the solution of the logistic stochastic differential equation to solve a control problem under interaction between species and delayed renewal of the resource. We also refer to the book \cite{de2008sustainable} for stochastic and deterministic models and resource management problems. All the models mentioned above use a Brownian motion to describe the uncertainty of the system evolution. We refer to this class of model as \textit{continuous models}. On the other side of the literature, Getz in \cite{getz1975optimal} has studied control problems related to a birth/death process. This work has been extended more recently by Claisse in \cite{claisse2018optimal} to branching processes. We refer to those models as \textit{discrete models}.\\

It is well known that some continuous population models can be seen as scaling limits of discrete models, see for example the work of Bansaye and M\'el\'eard in \cite{bansaye2015stochastic}. Hence continuous models can be considered as good  approximations of the macroscopic evolution of a population size. Therefore it is relevant to consider continuous models for resources management purposes. Moreover those models are attractive from a tractability viewpoint compared to discrete models. Indeed solving control problems in Brownian driven model essentially boils down to solve a partial differential equation. Whereas for discrete models it leads to integral-partial differential equation, which are often more complex to solve. Yet the remaining question is the relevancy of designing a management policy based on a continuous modeling while the controlled population (or resource) is naturally discrete.\\

To try to give an answer to this question we are going to consider a sequence of discrete population models that converges towards a continuous population model. For each of those models we consider a control problem. Each of them are the natural adaptations of the same control problem to the different models. Therefore we expect the solutions of the discrete control problems to converge towards the solution of the continuous limit problem. From $\Gamma$-convergence results adapted to stochastic control problems as in for instance the articles of Buttazzo and Del Maso \cite{buttazzo1982gamma} and Belloni, Buttazzo and Freddi \cite{belloni1993completion}, we expect to have convergence of value functions (see also for instance \cite[Theorem 10.22]{dal2012introduction}) and a kind of weak convergence of optimal controls (see for instance \cite[Proposition 2.8]{belloni1993completion}).  This is emphasized in a toy model (see Section \ref{sec:definition_model}) where besides convergence of the value functions we also get convergence in law of the state process under the optimal control. In this paper we prove the convergence of the controls as sequence of processes. This is stronger than $\Gamma-$convergence. Since we aim at dealing with non-Markovian stochastic control problems our problematic is to prove the convergence of solutions to a sequence of Backward Stochastic Differential Equations (BSDE for short) driven by a sequence of converging martingales. \\

We know from the seminal paper of Donsker \cite{donsker1951invariance} that a scaling in time procedure leads to the weak convergence of a random walk to a Brownian motion. {Following the idea of Donsker, Pardoux in \cite{pardoux1999homogeneization,pardoux1999bsde} has studied the weak convergence of Markovian BSDE driven by Brownian motions. Extending this result to the theory of non-Markovian BSDE with fixed time horizon $T>0$, Briand, Delyon and M\'emin in \cite{briand2001donsker} have provided a time discretization of the Brownian motion to get the convergence of a time discretized BSDE. More precisely, they consider a sequence of random walks $(W^n)_{n\geq 0}$ converging towards a Brownian motion $W$. Then they prove the convergence of the solutions of a sequence of BSDEs driven the $(W^n)_{n\geq 0}$ towards the solution of a BSDE driven by $W$. The main idea is to prove the convergence of the terms involved in the martingale representation with respect to $W^n$ when $n$ goes to infinity. For this they use the convergence, in the sense of Coquet, M\'emin and Slominski \cite{coquet2001weak}, of the filtrations associated to each of the $(W^n)_{n\geq 0}$ towards the natural filtration associated to $W$. Those results have been extended in \cite{briand2002robustness} to a more general situation, without assuming that $W^n$  has a predictable representation property, but assuming that the brackets of the martingales $( W^n )_{n\geq 0}$ are uniformly bounded. In a more general case, preliminary results on the convergence of BSDEs driven by general martingales have been obtained in the PhD thesis of Alexandros Saplaouras, in $L^2$ for the Skorokhod distance, see \cite[Chapter IV]{alex}. \\

In the present paper we aim at extending both the results of \cite{pardoux1999homogeneization,briand2001donsker} to a wider class of martingale convergence and the results of \cite{alex} to stronger convergence. Starting from a scaling result in \cite{bansaye2015stochastic} showing that a sequence of scaled birth/death processes $(X^K)_{K\geq 0}$ with scaling parameter $K>0$ converges in law to the solution $X$ of a stochastic Feller diffusion, we begin to extend it to more general birth and death intensities. We then consider a sequence of BSDEs of the form
$$
\mathbf{(B)}_K:~ Y^K_t = \xi^K + \int_t^T g^K(X^K_s, Y_s^K, Z^K_s)\cdot \phi^K_s \mathrm{d}A^K_s - \int_t^T Z^K_s \cdot \mathrm{d}M^K_s,
$$
where $\xi^K$ is some general terminal random condition, $g^K$ the generator of the BSDE, $M^K$ a two dimensional martingale associated to the population model $X^K$ and $\phi^K\mathrm{d} A^K$ denotes the measure associated to the angle bracket of $M^K$. We also consider the continuous counterpart of $(\textbf{B})_K$,
$$
(\mathbf{B}):~ Y_t = \xi + \int_t^T g(X_s, Y_s, Z_s)\mathrm{d}A_s - \int_t^T Z_s \mathrm{d} M^X_s,
$$
where $\xi$ is some terminal condition, $g$ is the generator of the BSDE, $M^X$ is a one dimensional martingale related to the diffusion term of $X$ and $A$ is its angle bracket. The existence and uniqueness of solutions to such BSDEs driven by general martingales have been studied, for instance, by El Karoui and Huang in \cite{el1997general}, Confortola and Fuhrman in \cite{confortola2013backward} or more recently by Papapantoleon, Possama\"i and Saplaouras in \cite{papapantoleon2018existence} in a general framework. Inspired by \cite{briand2002robustness}, we prove that when $(\xi^K)_{K\geq 0}$ converges toward $\xi$ and $(g^K)_{K\geq 0}$ towards $g$, the solution of $(\mathbf{B})_{K}$ converges to the solution of $(\mathbf{B})$. Moreover, since we are motivated by applying our results to stochastic control problems, we prove a stronger convergence, compared to \cite{alex}, for the solutions to the particular BSDEs considered. To obtain such stronger convergence, we however need to set stronger assumptions on the integrability of terminal conditions $(\xi^K)_{K\geq 0}$. The methods used are related to the so-called martingale problem as stated by Jacod and Shiryaev in \cite{jacod2013limit} and to the double-Picard iterations craftly used in \cite{briand2002robustness}. Our results enable us to investigate the scaling limit of solutions to non-Markovian stochastic control problems applied to population monitoring. \\

\paragraph{Main Contributions.} Our approach extends the results in \cite{pardoux1999bsde,pardoux1999homogeneization,briand2001donsker,briand2002robustness} to counting processes, beyond the convergence of a time discretization of the BSDE. This paper is also more focused on applications to stochastic control problems, dealing with examples in population monitoring. We prove that a sequence of solutions to BSDEs driven by birth/death processes converges to a BSDE driven by a one dimensional Brownian motion in tractable spaces to investigate the scaling limits of solutions to stochastic control problems. This also extends the preliminary results on the robustness of solutions to BSDEs in \cite[Chapter IV]{alex} to the convergence of solutions to our particular BSDE in different spaces. Moreover, the results obtained on the convergence of optimal controls in stochastic control problems go beyond $\Gamma-$convergence since we get a strong form of convergence for the optimal controls.\\

The structure of the paper is the following.  In Section \ref{sec:definition_model} we study the convergence of a rescaled birth/death process to the solution of a stochastic Feller type SDE by extending \cite{bansaye2015stochastic} to more general dynamics (see Theorem \ref{th:rescaling}). We also  provide fundamental properties of our state processes such as exponential moments (see Proposition \ref{prop:expo_moments} and Corollary \ref{cor:expo:feller}). Section \ref{sec:toy_model} introduces a toy model motivating our study and illustrated with numerical simulations. In Section \ref{sec:convergence}, Theorem \ref{th:main_theorem} gives convergence of the solutions to $(\mathbf{B})_K$ to the solution of $(\mathbf{B})$. In Section \ref{sec:application_control} from our BSDE approach we deduce convergence of the values and optimal controls to a sequence of control problems. Section \ref{sec:proof} gives the main proofs of our results. Minor proofs are given in the appendix.\\

The spaces considered are defined in Appendix \ref{appendix:space}. 

\section{From a discrete to a continuous population model}
\label{sec:definition_model}

In this section we define a sequence of discrete population models. We show that this sequence converges in law towards a continuous Feller population model by extending \cite[Theorem III-3.2]{bansaye2015stochastic} to more general population dynamic models.

\subsection{Definition of the discrete population models}
\label{subsec:definition_discrete_models}

We consider positive continuous functions $f^b$, $f^d$ and $\sigma$ defined from $\mathbb R$ into $\mathbb R^+$ that satisfy the following standing assumption.

\begin{assumption}
\label{assumption:model}~
\begin{itemize}
\item[(i)] The functions $f^b$, $f^d$ and $\sigma$ are null on $\mathbb{R}^-$ and there exists non negative constants $\nu$, $\mu$, $\eta$ and $\underline{\eta}$  such that  for any $x\in \mathbb{R}^+$
$$
f^{b}(x)\leq \nu x,~ f^d(x) \leq \mu x ,~ \underline{\eta}x \leq \sigma^2(x) \leq \eta( 1+x),
$$

\item[(ii)]$f:=f^b-f^d$ and $\sigma^2$ are Lipschitz continuous.
\end{itemize}
\end{assumption}

We note $\Omega_d$ the set of piecewise continuous increasing positive functions with jumps equal to $1$. We denote by $\mathbb{F}$ the natural filtration associated to the canonical process $(N^b, N^d)$ of $\Omega_d^2$.\\

For a fixed $K\geq 0$ and $n\geq 0$ we define a population model on the stochastic basis $(\Omega_d^2,\mathbb{F})$. The initial population is $K n$ and the processes $N^b$ and $N^d$ represent respectively the number of birth and death in the population. This means that when the process $N^b$ jumps there is a new individual in the population and when $N^d$ jumps there is one individual less in the population. Therefore at time $t$ the population size is $K n  + N_t^b-N_t^d$. As we are interested in the large population limit (which corresponds to $K$ large) we consider the rescaled population process 
$$
X^{K, n} = n + \frac{  N^b - N^d}{K}.
$$
We define the birth intensity in the model with parameter $K$ and initial population $n$ as
$$
\lambda^{K,n, b}_t = \lambda^{K, b}(X^{K,n}_{t-}) :=f^b(X^{K,n}_{t-})K + \frac{\sigma^2(X^{K,n}_{t-})}{2}K^2
$$
and the intensity of death
$$
\lambda^{K,n, d}_t = \lambda^{K, d}(X^{K,n}_{t-}) := f^d(X^{K,n}_{t-})K + \frac{\sigma^2(X^{K,n}_{t-})}{2}K^2.
$$

\begin{remark}
\label{remark:bansayeincluded}
 Note that $f^b(x)=\mu x$, $f^d(x)=\nu x$ and $\sigma^2(x)=\sigma^2 x$ satisfy Assumption \ref{assumption:model}. Consequently, the model studied in Theorem III-3.2 in \cite{bansaye2015stochastic} is included in the scope of this paper.
\end{remark}

Following Theorem 3.6 in \cite{jacod1975multivariate} there exists a unique probability measure $\mathbb{P}^{K, n}$ on $(\Omega_d^2, \mathbb{F})$ such that the processes
$$
M^{K,n, i}_t = N^i_t - \int_0^t \lambda^{ K,n,i}_s\mathrm{d}s , \text{ for }i\in\{b,d\}
$$
are local martingales. It means that under the probability $\mathbb{P}^{K,n}$ the process $N^b$ (resp. $N^d$) has intensity $\lambda^{K,n, b}$ (resp. $\lambda^{K, n,d}$). Note that if $m>n\geq 0$ then $\mathbb{P}^{K,n}$ is absolutely continuous with respect to $\mathbb{P}^{K,m}$ and we have:
\begin{equation}
\label{eq:change_proba_model}
\frac{\mathrm{d}\mathbb{P}^{K, n}}{\mathrm{d}\mathbb{P}^{K, m}} = L^{n,m}_T
\end{equation}
where
$$
\mathrm{d}L^{n,m}_t = L^{n,m}_{t-} \big( \sum_{i\in\{b,d\}} \frac{\lambda^{K,n,i}_t-\lambda^{K,m,i}_t}{\lambda^{K,m,i}_t} \mathbf{1}_{X^{K,m}_{t-} > 0} \mathrm{d}M^{K,m,i}_t \big)\text{ with }L^{n,m}_0 = 1.
$$
We justify this change of measure in Appendix \ref{appendix:change_measure}.\\

For the rest of this work we fix an initial population $x_0$ and do not write anymore the superscript $x_0$ to lighten the notations. We write $\mathbb E$ instead of $\mathbb E^{\mathbb P^K}$ when there is no ambiguity on the probability used. For any $K$ we consider the processes $M^K = (M^{K, b}, M^{K, d})$, $\lambda^K_t = \lambda^{K, b}_t-\lambda^{K, d}_t$ and for $i\in \{b, d\}$
$$
\overline \Lambda^{K,i}_t= \int_0^t \lambda^{K,i}_s K^{-2}\mathrm{d}s,~ \overline{N}^{K,i} = N^{i} K^{-2}\text{ and }\overline{M}^{K,i} = M^{K,i} K^{-1}.
$$
We note $\overline{M}^K = (\overline{M}^{K, b}, \overline{M}^{K, d})$. The rescaled population process is now noted
$$
X^K = x_0 + \frac{N^b - N^d}{K}.
$$

\subsection{Scaling limit of the sequence $(X^K)_{K\geq 0}$}
\label{subsec:scaling_limit}

Intuitively, and having in mind \cite[Theorem 3.2]{bansaye2015stochastic}, a continuous version of the processes $(X^K)_{K\geq 0}$, denoted by $X$, would be an Ito diffusion with drift equal to $f(X)$ and volatility given by $\sigma^2(X)$. We formalize this intuition in the following result  which extends \cite[Theorem 3.2]{bansaye2015stochastic}. The proof is given in Section \ref{appendix:rescalingthm}.
\begin{theorem}
\label{th:rescaling}
The sequence $\big(X^K, \overline{M}^{K}, \overline{N}^{K,b}, \overline{N}^{K,d}, \overline{\Lambda}^{K,b}, \overline{\Lambda}^{K,d} \big)_{K\geq 0}$ converges in law for the Skorokhod topology towards $ (X, M, A, A, A,A)$ such that
\begin{itemize}
\item[(i)] there exists a bi-dimensional Brownian motions $(B^b, B^d)$ satisfying
$$
M_t = \int_0^t \frac{\sigma(X_s)}{\sqrt{2}} \mathrm{d}(B^b_s, B^d_s),
$$
\item[(ii)] with $B = (B^b + B^d)/\sqrt{2}$, the process $X$ is the unique strong solution of
\[
(\mathbf{S}):~X_t = x_0 + \int_0^t f(X_s)\mathrm{d}s + \sigma(X_s)\mathrm{d}B_s,
\]
\item[(iii)] $A = \int_0^{\cdot}\sigma^2(X_s)\mathrm{d}s$.
\end{itemize} 

Moreover, there exists a probability space $(\Omega, \mathbb{F}, \mathbb{P})$ and a sequence $(N^{K,d}, N^{K,b})_{K\geq 0}$ such that for any $K\geq 0$, $(N^{K, d}, N^{K,b})$ has the law of $(N^d, N^b)$ under $\mathbb{P}^{K}$. Moreover on this space the sequence 
$$
(X^K, \overline{M}^{K}, \overline{N}^{K,b}, \overline{N}^{K,d}, \overline{\Lambda}^{K,b}, \overline{\Lambda}^{K,d} )_{K\geq 0}
$$
 converges in $\mathcal{S}^2_1 \times \mathcal{S}_2^2 \times \mathcal{S}_1^1 \times \mathcal{S}_1^1\times \mathcal{S}_1^1 \times \mathcal{S}_1^1$ to $(X, M, A, A, A, A)$ when $K$ goes to $+\infty$.
\end{theorem}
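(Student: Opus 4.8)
The plan is to establish the theorem in two stages, corresponding to the two assertions in the statement: first weak convergence in the Skorokhod topology, then the strengthening to a.s. convergence in the $\mathcal{S}^p_q$ spaces on a suitable probability space. For the first stage I would follow the martingale-problem strategy of \cite[Theorem III-3.2]{bansaye2015stochastic} and \cite{jacod2013limit}. The key steps are: (1) prove tightness of the laws of $\big(X^K, \overline{M}^{K}, \overline{N}^{K,b}, \overline{N}^{K,d}, \overline{\Lambda}^{K,b}, \overline{\Lambda}^{K,d}\big)_{K\geq 0}$ in the Skorokhod space, using Aldous's criterion together with the moment bounds on $X^K$ (Proposition \ref{prop:expo_moments}); here the linear-growth bounds $f^b(x)\le \nu x$, $f^d(x)\le \mu x$, $\underline\eta x\le\sigma^2(x)\le\eta(1+x)$ from Assumption \ref{assumption:model}(i) are what keep the rescaled intensities $\lambda^{K,i}K^{-2}=f^i(X^K)K^{-1}+\sigma^2(X^K)/2$ under control after division by $K^2$, and the jumps of $X^K$ are of size $1/K\to 0$, forcing any limit point to be continuous. (2) Identify any limit point: writing the canonical semimartingale characteristics of the prelimit vector, one checks that the predictable drift of $X^K$ is $\int_0^\cdot f(X^K_s)\,ds + o(1)$ and that the bracket $\langle \overline M^{K,b}\rangle = \overline\Lambda^{K,b} \to \int_0^\cdot \sigma^2(X_s)\,ds$, likewise for the $d$-component and for the cross-bracket (which also converges to $A$, since both intensities share the dominant $K^2\sigma^2/2$ term). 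Passing to the limit in the associated martingale problem shows that any limit point $(X,M,A,A,A,A)$ satisfies: $M$ is a continuous martingale with $\langle M^b\rangle=\langle M^d\rangle=\langle M^b,M^d\rangle = A$, so (after a standard representation) $M = \int_0^\cdot \sigma(X_s)/\sqrt2\, d(B^b_s,B^d_s)$ with $(B^b,B^d)$ a planar Brownian motion, and $X = x_0 + \int_0^\cdot f(X_s)\,ds + \int_0^\cdot \sigma(X_s)\,dB_s$ with $B=(B^b+B^d)/\sqrt2$; note $\langle B\rangle_t = \tfrac12(\langle B^b\rangle+\langle B^d\rangle+2\langle B^b,B^d\rangle)_t$ needs care — the two driving Brownian motions are \emph{not} independent in the limit, their bracket is $t$, so $B$ is indeed a standard Brownian motion. (3) Since $f=f^b-f^d$ and $\sigma^2$ are Lipschitz by Assumption \ref{assumption:model}(ii), the SDE $(\mathbf{S})$ has a unique strong solution, so the limit law is unique and the whole sequence converges.

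For the second stage I would invoke the Skorokhod representation theorem to realize the weak convergence as a.s. convergence in the Skorokhod topology on some $(\Omega,\mathbb{F},\mathbb{P})$, carrying along copies $(N^{K,d},N^{K,b})$ with the correct laws. Since the limit $X$ is continuous (and $M$, $A$ are continuous), a.s. convergence in the Skorokhod $J_1$ topology to a continuous limit upgrades to a.s. uniform convergence on compacts. To get convergence in $\mathcal{S}^2_1$, $\mathcal{S}^2_2$, $\mathcal{S}^1_1$ — i.e. $L^p$-convergence of the sup norm — it remains to produce uniform integrability of $\big(\sup_{t\le T}|X^K_t|\big)^2$ and of the corresponding suprema of $\overline M^K$, $\overline N^{K,i}$, $\overline\Lambda^{K,i}$. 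This is exactly where the exponential-moment estimates of Proposition \ref{prop:expo_moments} and Corollary \ref{cor:expo:feller} do the work: uniform-in-$K$ exponential moments of $\sup_{t\le T} X^K_t$ give more than enough uniform integrability of any fixed power, and Doob's inequality transfers this to $\overline M^{K}$ while $\overline N^{K,i} = \overline M^{K,i}/K^{\,0} \cdots$ — more precisely $\overline N^{K,i}=\overline\Lambda^{K,i}+\overline M^{K,i}K^{-1}$, both pieces controlled by the same moment bounds. Combining a.s. convergence with uniform integrability yields the claimed convergence in each of the listed Banach spaces.

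The main obstacle, I expect, is step (2) of the first stage: correctly tracking the joint characteristics so that the limiting bracket structure of $(M^b,M^d)$ comes out as $\langle M^b\rangle=\langle M^d\rangle=\langle M^b,M^d\rangle=A$ — the fact that \emph{all three} brackets converge to the same $A$ (because the $K^2\sigma^2/2$ term dominates the $K f^i$ term and is common to birth and death) is what produces the degenerate planar Brownian motion whose diagonalized combination $B$ is one-dimensional, and getting the constants $1/\sqrt2$ right requires care. A secondary technical point is verifying Aldous tightness for the intensity/counting processes $\overline\Lambda^{K,i}$, $\overline N^{K,i}$ uniformly in $K$ despite their prelimit values growing like $K^2$ before rescaling; here one leans on the $f^i(x)\le (\nu\vee\mu)x$ and $\sigma^2(x)\le\eta(1+x)$ bounds to dominate $\overline\Lambda^{K,i}_t \le C\int_0^t(1+X^K_s)\,ds$ and then on the moment bounds for $X^K$. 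Everything else — Skorokhod representation, the continuity upgrade, and the uniform-integrability argument — is routine once Proposition \ref{prop:expo_moments} is in hand.
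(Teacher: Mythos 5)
Your overall architecture (tightness, identification of limit points via the martingale problem, pathwise uniqueness of $(\mathbf{S})$ to pin down the law, then Skorokhod representation plus uniform integrability to upgrade to the $\mathcal{S}^p_q$ spaces) matches the paper's proof. But step (2) of your first stage contains a genuine error in the identification of the limit bracket structure, and it is precisely the point you flag as "the main obstacle". You claim that the cross-bracket $\langle \overline{M}^{K,b},\overline{M}^{K,d}\rangle$ "also converges to $A$, since both intensities share the dominant $K^2\sigma^2/2$ term", and consequently that $B^b,B^d$ are perfectly correlated ($\langle B^b,B^d\rangle_t=t$). This is false: $N^b$ and $N^d$ are the two marks of a single marked point process and almost surely never jump simultaneously, so $[\overline{M}^{K,b},\overline{M}^{K,d}]\equiv 0$ for every $K$, and in the limit $[M^b,M^d]=\langle M^b,M^d\rangle=0$ (this is exactly how the paper argues, via Corollary VI-6.29 of \cite{jacod2013limit}). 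The equality of the two intensities makes the \emph{individual} brackets $\overline{\Lambda}^{K,b}$ and $\overline{\Lambda}^{K,d}$ converge to the same $A$; it says nothing about the covariation, which is governed by common jumps and there are none. Your version is also internally inconsistent and would destroy the theorem: if $\langle M^b\rangle=\langle M^d\rangle=\langle M^b,M^d\rangle$ then $\langle M^b-M^d\rangle=0$, whereas the noise in $(\mathbf{S})$ comes from $X=x_0+\int_0^\cdot f(X_s)\mathrm{d}s+M^b-M^d$, so $X$ would be deterministic; and with $\langle B^b,B^d\rangle_t=t$ one gets $\langle (B^b+B^d)/\sqrt{2}\rangle_t=2t$, so your $B$ would not be standard. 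The correct mechanism is the opposite "degeneracy": $(B^b,B^d)$ is a genuinely two-dimensional Brownian motion with \emph{independent} components (obtained from $\langle M^b\rangle=\langle M^d\rangle=\int_0^\cdot\sigma^2(X_s)/2\,\mathrm{d}s$, $\langle M^b,M^d\rangle=0$ and the representation theorem, Theorem V-3.9 of Revuz--Yor), and the difference $M^b-M^d$ has bracket $\langle M^b\rangle+\langle M^d\rangle=\int_0^\cdot\sigma^2(X_s)\mathrm{d}s$, which is what produces the volatility $\sigma(X)$ and the one-dimensional driving Brownian motion.

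Two secondary remarks. First, Proposition \ref{prop:expo_moments} is stated only under $\sigma^2(x)=\sigma^2x$ and controls $\exp(\beta_0\int_0^sX^K_u\mathrm{d}u+\beta_0X^K_s)$ at fixed times, not exponential moments of $\sup_{t\le T}X^K_t$; for tightness under the general Assumption \ref{assumption:model} the paper only needs first moments via Gr\"onwall together with monotonicity of $\overline N^{K,i},\overline\Lambda^{K,i}$ and the vanishing jump sizes (Theorem VI-3.21 and Proposition VI-3.26 of \cite{jacod2013limit}), and for the $\mathcal{S}^p_q$ upgrade it establishes polynomial bounds ($\mathcal S^2$ bounds for $\overline N^{K,i},\overline\Lambda^{K,i}$, $\mathcal S^4$ bounds for $\overline M^K,X^K$ via BDG) rather than exponential bounds on the supremum; your uniform-integrability step should be phrased at that level. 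Second, the drift of $X^K$ is exactly $\int_0^\cdot f(X^K_s)\mathrm{d}s$ (the $\sigma^2K^2/2$ terms cancel identically between birth and death), no $o(1)$ correction is needed.
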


According to the last point of Theorem \ref{th:rescaling} from now on we work under the probability space $(\Omega, \mathbb{F}, \mathbb{P})$ and we consider that the processes  $
(X^K, \overline{M}^{K}, \overline{N}^{K,b}, \overline{N}^{K,d}, \overline{\Lambda}^{K,b}, \overline{\Lambda}^{K,d} )_{K\geq 0}$ and $(X, M, A, A, A, A)$ are defined on this space. For any $K$ we note $\mathbb{F}^K$ the natural filtration associated to $X^K$ and $\mathbb{F}^X$ the natural filtration associated to $X$.\\

Before going to the next section, we define some processes that we will extensively use in the rest of the paper. We note for $i\in\{b,d\}$
$$
M^i_t := \int_0^t \frac{\sigma(X_s)}{\sqrt{2}}\mathrm{d}B^i_s,
$$
so that $M=(M^d, M^b)$ and $M^X := M^b+M^d$ that is written
$$
M_t^X = \int_0^t \sigma(X_s)\mathrm{d}B_s.
$$
We also consider the processes
$$
A^K_t := \int_0^t \frac{\lambda^{K, b}_s + \lambda^{K, d}_s}{K^2} \mathrm{d}s,~p^{K, b}_t := \frac{\lambda^{K, b}_t}{\lambda^{K, b}_t + \lambda^{K, d}_t},\text{ and }p^{K, d}_t := \frac{\lambda^{K, d}_t}{\lambda^{K, b}_t + \lambda^{K, d}_t}.
$$
Note that under the probability $\mathbb{P}^{K}$ the random measure $m$ associated to the process $(N^b, N^d)$ interpreted as a compound jump process with values in $E = \{b, d\}$ admits as predictable compensator measure
$$
\pi^K(\mathrm{d}e, \mathrm{d}t) =  \big( \phi^{K,b}_t\delta_{b}(\mathrm{d}e) + \phi^{K,d}_t\delta_{d}(\mathrm{d}e) \big) \mathrm{d}A^K_t 
$$
with $\phi^K_t= (\phi^{K,b}_t,\phi^{K,d}_t)= (p^{K, b}_t, p^{K, d}_t)K^2$ and where $\delta_i$ denotes the Dirac measure at point $i\in  \{b, d\}$. This point of view is introduced in order to draw a parallel with the framework of \cite{confortola2013backward} to which we will refer extensively in Section \ref{subsec:convergence_BSDEs}.\\

\subsection{Uniform exponential moments}
\label{subsec:uniform_exponential_moments}

Finally we show that the sequence of processes $(X^K,\int_0^\cdot X_s^K)$ admits exponential moments uniformly in $K$ if $\sigma^2$ is linear. The proof of this result is postponed in Appendix \ref{proof:expo_moments}.
\begin{proposition}
\label{prop:expo_moments}
If there exists a positive constant $\sigma$ such that $\sigma^2(x) = \sigma^2 x$ there exists some positive constants $\beta_0$, $K_0$ and $T$ such that for any $s\leq T$ we have
$$
\underset{K\geq K_0}{\sup}~ \mathbb{E}[{\exp}(\beta_0 \int_0^s X^K_u\mathrm{d}u + \beta_0 X^K_s)]<\infty.
$$
\end{proposition}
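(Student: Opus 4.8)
The plan is to control the exponential moment via an exponential martingale argument. Fix the linear case $\sigma^2(x)=\sigma^2 x$, so that the birth and death intensities become $\lambda^{K,i}_t = f^i(X^K_{t-})K + \tfrac{\sigma^2}{2}X^K_{t-}K^2$ for $i\in\{b,d\}$. First I would compute, for a deterministic smooth function $t\mapsto \beta_t$ and $g^K_s := \beta_s X^K_s + \beta_s\int_0^s X^K_u\,\mathrm{d}u$, the Doléans-Dade exponential associated with the jump process $\exp(g^K_s)$. Since $X^K$ jumps by $\pm 1/K$ at rates $\lambda^{K,b}$ and $\lambda^{K,d}$, Itô's formula for pure-jump processes gives that
\[
\mathcal{E}^K_t := \exp\!\Big(\beta_t X^K_t + \int_0^t \beta_s X^K_s\,\mathrm{d}s - \int_0^t \psi^K_s\,\mathrm{d}s\Big)
\]
is a local martingale, where $\psi^K_s$ collects the compensator terms: the drift contribution $\dot\beta_s X^K_s + \beta_s X^K_s + \beta_s\int_0^s X^K_u\mathrm{d}u$ (wait — more carefully, the $\mathrm{d}s$-term coming from $\mathrm{d}(\beta_t X^K_t)$ is $\dot\beta_t X^K_t\,\mathrm{d}t$) plus the jump-compensator term $\lambda^{K,b}_s(e^{\beta_s/K}-1) + \lambda^{K,d}_s(e^{-\beta_s/K}-1)$. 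The key elementary estimate is that $e^{\beta_s/K}-1 \le \beta_s/K + \tfrac{\beta_s^2}{2K^2}e^{\beta_s/K}$ and $e^{-\beta_s/K}-1 \le -\beta_s/K + \tfrac{\beta_s^2}{2K^2}$, so that the $K^2$-part of $\lambda^{K,b}+\lambda^{K,d}$ contributes at most $\tfrac{\sigma^2}{2}X^K_s K^2 \cdot \tfrac{\beta_s^2}{K^2}(1+e^{\beta_s/K}) \le C\sigma^2\beta_s^2 X^K_s$ for $K$ large and $\beta_s$ bounded, while the $K$-part contributes $O(X^K_s)$ using $f^i(x)\le (\nu\vee\mu)x$ and $|e^{\pm\beta_s/K}-1|\le C/K$. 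Collecting terms, $\psi^K_s \le \big(\dot\beta_s + \beta_s(1+C\beta_s) + C\big)X^K_s =: h(\beta_s,\dot\beta_s)\,X^K_s$, uniformly in $K\ge K_0$.

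Next I would choose $\beta$ to absorb this. The point is that $\exp(\int_0^s \beta_u X^K_u\,\mathrm{d}u)$ appears inside $\mathcal{E}^K$, so if I instead require $\dot\beta_s + \beta_s(1+C\beta_s)+C \le \beta_s$, i.e. $\dot\beta_s \le -C\beta_s^2 - C$, then $\psi^K_s \le \beta_s X^K_s$ and hence $\exp(\beta_s X^K_s + \int_0^s\beta_u X^K_u\mathrm{d}u) \le \mathcal{E}^K_s \cdot \exp(\text{nonnegative})$... let me arrange the signs so that in fact $\exp(\beta_s X^K_s + \int_0^s \beta_u X^K_u\,\mathrm{d}u) \le \mathcal{E}^K_s$. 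The ODE $\dot\beta_s = -C\beta_s^2 - C$ with $\beta_0 = \beta_0$ small has a solution that stays positive on a time interval $[0,T]$ with $T$ depending only on the constants $C$ and on $\beta_0$ (it is the explicit $\arctan$-type blow-up time, but we only need existence on some $[0,T]$). This is exactly where the restriction to a \emph{small} $\beta_0$ and a \emph{finite} horizon $T$ enters. With such a $\beta$, $\mathcal{E}^K$ is a nonnegative local martingale hence a supermartingale, so $\mathbb{E}[\mathcal{E}^K_s] \le \mathcal{E}^K_0 = e^{\beta_0 x_0}$, giving
\[
\mathbb{E}\big[\exp(\beta_s X^K_s + \beta_s{\textstyle\int_0^s} X^K_u\,\mathrm{d}u)\big] \le \mathbb{E}\big[\exp(\beta_s X^K_s + {\textstyle\int_0^s}\beta_u X^K_u\,\mathrm{d}u)\big] \le \mathbb{E}[\mathcal{E}^K_s] \le e^{\beta_0 x_0},
\]
where the first inequality uses $\beta$ decreasing so $\beta_s \le \beta_u$ for $u\le s$ — one should double check monotonicity; if $\beta$ is decreasing then $\int_0^s\beta_u X^K_u\mathrm{d}u \ge \beta_s\int_0^s X^K_u\mathrm{d}u$, which is the right direction. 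Taking the supremum over $K\ge K_0$ and renaming $\beta_T$ (the smallest value of $\beta$ on $[0,T]$, still positive) as the constant $\beta_0$ in the statement finishes the proof.

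The main obstacle is the rigorous justification that $\mathcal{E}^K$ is a genuine (super)martingale and not merely a local martingale: a priori $X^K$ could explode, and the exponential weight is delicate. I would handle this by a localization along the stopping times $T_m := \inf\{t : X^K_t \ge m\}$, obtaining $\mathbb{E}[\mathcal{E}^K_{s\wedge T_m}] \le e^{\beta_0 x_0}$, then pass to the limit $m\to\infty$ using Fatou on the left side together with the fact (from Assumption \ref{assumption:model}, linear growth of the intensities) that $X^K$ is non-explosive and $\mathcal{E}^K_{s\wedge T_m}\to\mathcal{E}^K_s$ a.s. A secondary technical point is keeping all the $O(1/K)$ Taylor-remainder bounds uniform in $K\ge K_0$; this is routine since $\beta$ is bounded on $[0,T]$, but it must be stated carefully because it is what forces $K\ge K_0$ rather than all $K$. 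Everything else — the Itô formula for the finite-variation-plus-jump process $X^K$, the computation of the compensator, the explicit ODE for $\beta$ — is a direct calculation.
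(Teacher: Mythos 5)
Your route is genuinely different from the paper's. The paper never tilts an exponential martingale: it first computes, via the branching property and an explicit Riccati ODE, the exact exponential moment generating function of a \emph{linear} birth/death (branching) process; it then dominates the (unscaled) population process, by a thinning/coupling construction, with a linear branching process with rates $\nu+\tfrac{\sigma^2}{2}K$ and $\tfrac{\sigma^2}{2}K$; finally it passes to the limit in the explicit formulas to obtain a bound uniform in $K\geq K_0$ for $s\leq T$ below the limiting blow-up time. Your exponential-supermartingale argument with a time-dependent tilt $\beta_t$ solving a Riccati-type ODE is more direct, avoids the coupling construction altogether, and (suitably executed) would even give the statement for an arbitrary prescribed horizon $T$ with $\beta_0$ depending on $T$; so as a strategy it is perfectly adequate for the proposition as stated, and the localization at $T_m$ plus Fatou, the supermartingale property of a nonnegative local martingale, the uniformity in $K\geq K_0$ of the Taylor remainders, and the use of monotonicity of $\beta$ to replace $\int_0^s\beta_u X^K_u\,\mathrm{d}u$ by $\beta_s\int_0^s X^K_u\,\mathrm{d}u$ are all sound.

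There is, however, a concrete gap in the comparison step. With $\mathcal{E}^K_s=\exp\big(\beta_s X^K_s+\int_0^s\beta_u X^K_u\,\mathrm{d}u-\int_0^s\psi^K_u\,\mathrm{d}u\big)$, the inequality $\exp\big(\beta_s X^K_s+\int_0^s\beta_u X^K_u\,\mathrm{d}u\big)\leq\mathcal{E}^K_s$ is equivalent to $\int_0^s\psi^K_u\,\mathrm{d}u\leq 0$, whereas your ODE condition $\dot\beta_s\leq-C\beta_s^2-C$ only yields $\psi^K_s\leq\beta_s X^K_s$; this gives $\mathcal{E}^K_s\geq\exp(\beta_s X^K_s)$ and the time-integral term is lost, so the middle inequality of your final display does not follow from what you proved. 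The repair is one line: either require $\psi^K_s\leq 0$, i.e. $\dot\beta_s\leq-\beta_s-C\beta_s-C\beta_s^2-C$ (the drift term $\beta_s X^K_s$ created by differentiating the running integral must also be absorbed, which your condition does not do), or run the same computation with $\int_0^t 2\beta_u X^K_u\,\mathrm{d}u$ inside the exponent of $\mathcal{E}^K$ and keep the condition $\psi^K_s\leq\beta_s X^K_s$, so that subtracting $\int_0^s\beta_u X^K_u\,\mathrm{d}u$ still leaves the desired integral. In either variant the Riccati solution stays positive on a short interval $[0,T]$, and the rest of your argument goes through unchanged, yielding the claimed uniform bound for $K\geq K_0$ and $s\leq T$.
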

Without loss of of generality we assume that $K_0=0$. From now we fix a positive constant $\beta$ strictly smaller than $\beta_0$. As a consequence of Proposition \ref{prop:expo_moments}, for any integer $q$ we have for any $s \leq T$
\begin{equation*}
\underset{K\geq 0}{\sup} ~ \mathbb{E}[{\exp}(\beta \int_0^s X^K_u\mathrm{d}u + \beta X^K_s)(1 + |X^K_s|^q)]<+\infty
\end{equation*}
and
\begin{equation*}
\underset{K\geq 0}{\sup} ~ \mathbb{E}[\int_0^ T {\exp}(\beta \int_0^s X^K_u\mathrm{d}u + \beta X^K_s)(1 + |X^K_s|^q)\mathrm{d}s]<+\infty.
\end{equation*}

We deduce from Fatou's Lemma together with Proposition \ref{prop:expo_moments} that $X$ inherits from the exponential moments of $X^K$ as stated in the following corollary.
\begin{corollary}\label{cor:expo:feller}
If there exists $\sigma$ positive such that $\sigma^2(x) = \sigma^2 x$ there exists some positive constants $\beta_0$ and $T$ such that for any $s\leq T$ we have
$$
\mathbb{E}[{\exp}(\beta_0 \int_0^s X_u\mathrm{d}u + \beta_0 X_s)]<\infty.
$$
\end{corollary}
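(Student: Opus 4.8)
The plan is to use precisely the two ingredients announced in the paragraph preceding the statement: the uniform-in-$K$ exponential bound of Proposition~\ref{prop:expo_moments} and the convergence $X^K\to X$ from Theorem~\ref{th:rescaling}, combined through Fatou's lemma. So I would fix $s\le T$, take $\beta_0$ and $T$ as in Proposition~\ref{prop:expo_moments}, and recall that by the remark following that proposition we may assume $K_0=0$, so that the bound reads $\sup_{K\ge0}\mathbb E[\exp(\beta_0\int_0^s X^K_u\,\mathrm{d}u+\beta_0 X^K_s)]<\infty$.

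First I would upgrade the $\mathcal S^2_1$-convergence to an almost sure statement. By the last part of Theorem~\ref{th:rescaling} the processes $X^K$ and $X$ live on the common space $(\Omega,\mathbb F,\mathbb P)$ and $X^K\to X$ in $\mathcal S^2_1$, hence $\sup_{t\le T}|X^K_t-X_t|\to 0$ in $L^2(\mathbb P)$ and in probability; along a suitable subsequence $(K_j)_{j\ge0}$ this supremum tends to $0$ $\mathbb P$-almost surely. On the corresponding almost sure event one has $X^{K_j}_s\to X_s$ and, from the elementary bound $|\int_0^s X^{K_j}_u\,\mathrm{d}u-\int_0^s X_u\,\mathrm{d}u|\le s\,\sup_{t\le T}|X^{K_j}_t-X_t|$, also $\int_0^s X^{K_j}_u\,\mathrm{d}u\to\int_0^s X_u\,\mathrm{d}u$. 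Continuity of the exponential then gives
\[
\exp\!\Big(\beta_0\!\int_0^s X^{K_j}_u\,\mathrm{d}u+\beta_0 X^{K_j}_s\Big)\xrightarrow[j\to\infty]{}\exp\!\Big(\beta_0\!\int_0^s X_u\,\mathrm{d}u+\beta_0 X_s\Big)\quad\mathbb P\text{-a.s.}
\]

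Second, since all the variables involved are nonnegative, Fatou's lemma applied along this subsequence yields
\[
\mathbb E\!\left[\exp\!\Big(\beta_0\!\int_0^s X_u\,\mathrm{d}u+\beta_0 X_s\Big)\right]\le\liminf_{j\to\infty}\mathbb E\!\left[\exp\!\Big(\beta_0\!\int_0^s X^{K_j}_u\,\mathrm{d}u+\beta_0 X^{K_j}_s\Big)\right]\le\sup_{K\ge0}\mathbb E\!\left[\exp\!\Big(\beta_0\!\int_0^s X^{K}_u\,\mathrm{d}u+\beta_0 X^{K}_s\Big)\right],
\]
and the right-hand side is finite by Proposition~\ref{prop:expo_moments}. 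As $s\le T$ was arbitrary, the corollary follows.

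As for the main obstacle, there is essentially none once the common-space framework of Theorem~\ref{th:rescaling} is invoked; the only point requiring a little care is that $\mathcal S^2_1$-convergence only delivers almost sure convergence along a subsequence, but this is harmless because Fatou merely needs one sequence attaining the $\liminf$. Equivalently, one could appeal to the Skorokhod convergence in Theorem~\ref{th:rescaling} together with the fact that convergence to a continuous path in the Skorokhod topology is uniform on compacts, which again makes both $\int_0^s X^{K_j}_u\,\mathrm{d}u\to\int_0^s X_u\,\mathrm{d}u$ and $X^{K_j}_s\to X_s$ immediate.
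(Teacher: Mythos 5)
Your proof is correct and follows exactly the route the paper intends: the corollary is deduced from Proposition \ref{prop:expo_moments} via Fatou's lemma, using the convergence of $X^K$ to $X$ on the common space from Theorem \ref{th:rescaling}; your only addition is to spell out the (harmless) extraction of an almost surely convergent subsequence from the $\mathcal{S}^2_1$-convergence, which is a legitimate and standard detail.
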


In order to benefit from those exponential moments we now assume that $\sigma^2(x) =\sigma^2 x$ for some positive constant $\sigma$ fixed.

\section{Illustration of the study on a toy model}
\label{sec:toy_model}

In this section, we illustrate the $\Gamma-$convergence result applied to optimization problems in population dynamics. We consider specific parameters $f^d,~f^b$ and a sequence of control problems for which we are able to make explicit computations. Then, we show that the sequence of optimal controls converges in law to the optimal control of a continuous problem. In this section, we aim at providing the general main ideas of the paper rather than being perfectly accurate. Rigorous statements will be given in Section \ref{sec:application_control}.

\subsection{Discrete populations models}
\label{subsec:toy_model_discrete}

 We consider a discrete birth/death model as studied in \cite{bansaye2015stochastic} by choosing:
\begin{itemize}
\item[-] the initial population $x_0 \in \mathbb{N}$,
\item[-] the birth rate $f^b(x)=\nu x$ for some $\nu>0$, 
\item[-] the death rate $f^d(x)=\mu x$ for some $\mu>0$.
\end{itemize}
Recall from Remark \ref{remark:bansayeincluded} and from \cite[Theorem 3.2]{bansaye2015stochastic} that $(X_t^K)_{t \in [0,T]}$ converges in law for the Skorokhod topology towards the continuous diffusion process $(X_t)_{t \in [0, T ]}$ solution of the Feller stochastic differential equation
\begin{equation}
\label{feller:sto}
\mathrm{d}X_t = (\nu - \mu)X_t \mathrm{d}t + \sigma \sqrt{X_s}\mathrm{d}W_t,
\end{equation}
for $W$ a Brownian motion.\\

In this toy model, we assume that a resource manager regulates the population $X^K$ through an $\mathbb F^K$-predictable control $\alpha$. A control $\alpha$ is admissible if
\begin{itemize}
\item there exists a unique law $\mathbb{P}^{K, \alpha}$ under which the death intensity of the population is
\begin{equation*}
\lambda_t^{K,d,\alpha}:= KX_t^K(\mu+ K\frac{\sigma^2}{2}) + K X_t^K\alpha_t,
\end{equation*}
and the birth intensity is $\lambda^{K,b}$. When this probability exists, it is the law of the population under the control $\alpha$. 
\item $\lambda^{K,d, \alpha}$ is a non negative process $\mathbb{P}^{K, \alpha}$ almost surely. 
\end{itemize}
We denote by $\mathcal A^K$ the set of admissible controls.\\

The agent is assumed to be penalized if he fails at reaching a fixed level $\widetilde{x}>0$ of the resource at time $T$ determined by a regulator. We model this penalization by the square of the difference between the effective population size at time $T$ and the target $\tilde{x}$. So that the manager pays $\gamma(X_T^K-\widetilde{x})^2$ at time $T$ where $\gamma$ is a positive constant. The manager payoff is also assumed to be penalized by the instantaneous amount $\frac{|\alpha_t X_t^K|^2}{2}$ per unit of time when its effort is $\alpha$. The problem of the resource manager is thus to solve 
\begin{equation*}
(\textbf{TM})_K:~V_0^{K} = \underset{\alpha\in \mathcal{A}^K}{\sup} \; \mathbb{E}^{K, \alpha}[ -\gamma(X_T^K-\widetilde{x})^2 - \int_0^T \frac{ (\alpha_s X_s^K)^2}{2}\mathrm{d}s ]
\end{equation*}
where $\mathbb{E}^{K, \alpha}$ denotes the expectation taken under the probability $\mathbb{P}^{K, \alpha}$. We assume that $\sigma^2>2\gamma \tilde{x}$ and $\gamma<\mu$.\\

To solve this problem, as usual in stochastic control theory, we study the corresponding Hamilton-Jacobi-Bellman (HJB for short) equation and use a verification argument. The HJB equation associated to the control problem $(\textbf{TM})_K$ is
\[(\textbf{HJB})_K\begin{cases}
\partial_t U^K(t,x) +H^K\big(x,D_+^KU^K(t,x),D_-^KU^K(t,x)\big) = 0,\quad  (t,x)\in[0, T)\times (\mathbb{N}^*/K),\\
 U^K(T, x) = -\gamma(x-\widetilde{x})^2, \quad x\in (\mathbb N^*/K),
\end{cases}\]
with Hamiltonian $H^K$ given by
\[
H^K(x,p^+,p_-)=\sup_{\alpha} \Big\{ Kx (\nu+\frac{\sigma^2}2K) p_+ + Kx(\mu+\alpha+\frac{\sigma^2}2K)p_- - \frac{(\alpha x)^2 }{2}\Big\},
\]
and where
$$
D^K_+ U^K (t, x) = U^K (t, x+1/K) - U^K (t, x)\text{ and }~D^K_- U^K (t, x) = U^K (t, x-1/K) - U^K (t, x).
$$
The maximizer of the Hamiltonian is $\alpha^{K,*} = \frac{K p_-}{x}\mathbf 1_{x>0}$, hence
$$
H^K(x, p^+, p_-) = Kx (\nu+\frac{\sigma^2}2K) p_+ + Kx(\mu+\frac{\sigma^2}2K)p_- +  \frac{(Kp_-)^2}{2}\mathbf{1}_{x>0}.
$$
Note that we do not actually care about the value of the control when $x=0$ since if the population reaches $0$, it is stuck at this value. The partial differential equation (PDE for short) $(\textbf{HJB})_K$ is quadratic, so we search for a solution under the form
$$
U^K(t, x) = a_K(t) x^2 + b_K(t) x + c_K(t).
$$
Identifying the monomials, we get that $U^K$ is solution of $(\mathbf{HJB})_K$ if and only if $(a_K, b_K, c_K)$ is solution of the following systems of ODEs:
\begin{equation*}
\textbf{(ODE)}_K:~\left\{
\begin{array}{ll}
a_K'(t) + 2a_K(t) (\nu-\mu) + 2a_K^2(t) = 0,&~a_K(T) = -\gamma,\\
b_K'(t) -2 a_K(t)\big(\frac{a_K(t)}{K} - b_K(t) \big) + a_K(t) (\sigma^2+\frac{\mu+\nu}K) + b_K(t) (\nu-\mu) = 0,&~b_K(T) = 2\gamma  \widetilde{x},\\
c_K'(t) + \frac{1}{2  }\big( \frac{a_K(t)}{K} - b_K(t) \big)^2 = 0,&~c_K(T) = -\gamma \widetilde{x}^2.
\end{array} 
\right.
\end{equation*}
By Cauchy-Lipschitz theorem this system admits a unique solution. Thus the optimal effort of the agent is 
\[\alpha^{K,*}_t=\frac{1}{X_t^K}\Big(\frac{a_K(t)}{K}- 2X_t^Ka_K(t)-b_K(t)\Big)\mathbf{1}_{X^K_t>0} \]
and the corresponding death intensity is given by
$$
\lambda^{K,d,\alpha^{K,*}}_t=  KX_t^K(\mu+ K\frac{\sigma^2}{2}) + \big( -2X_t^Ka_K(t) +  \frac{a_K(t)X^K_t}K-b_K(t)\big)\mathbf{1}_{X_t^K>0}.
$$

Note that in view of $(\textbf{ODE})_K$ and since $a_K(T)$ is negative and $b_K(T)$ positive, there exists a $T$ small enough, independent of $K$, such that for any $K$ the control $\alpha^{K,*}$ is in $\mathcal{A}^K$. We refer to Appendix \ref{appendix:toy_model} for more details on this point. We assume that we are considering such short enough time horizon here.

\subsection{Continuous populations model}
\label{subsec:toy_model_continuous}

We now turn to the continuous version of the control problem. We assume that the manager controls the drift term in \eqref{feller:sto} through an $\mathbb F^X-$predictable process $\alpha$. We say that $\alpha$ is an admissible control when the following SDE admits a unique weak solution
$$
\mathrm{d}X_t = (\nu -\mu - \alpha_t)X_t \mathrm{d}t + \sigma \sqrt{X_t} \mathrm{d}W_t.
$$
When such solution exists we note $\mathbb{P}^{\alpha}$ its law that is the law of the population under the control $\alpha$. We denote by $\mathcal A$ the set of admissible controls. \\

The control problem in the continuous framework is written
\begin{equation*}
(\textbf{TM}):~V_0= \underset{\alpha\in \mathcal{A}}{\sup} \; \mathbb{E}^{ \alpha}[ -\gamma(X_T-\widetilde{x})^2 - \int_0^T \frac{ (\alpha_s X_s)^2}{2}\mathrm{d}s ].
\end{equation*}
The associated HJB equation is given by
\[(\textbf{HJB})\begin{cases}
\partial_t U(t,x) +H(x,DU(t,x),\Delta U(t,x)) = 0,\quad  (t,x)\in[0, T)\times \mathbb R^+,\\
 U(T, x) = -\gamma(x-\widetilde{x})^2, \quad x\in \mathbb R^+,
\end{cases}\]
where the Hamiltonian $H$ is
\begin{align*}
H(x,p,q)=\sup_{\alpha} \Big\{ (\nu-\mu-\alpha)xp-\frac{|\alpha x|^2}2+\frac12 x \sigma^2 q\Big\}=(\nu-\mu)xp+\frac12 x \sigma^2 q+\frac{p^2}2\mathbf{1}_{x>0}.
\end{align*}
The maximizer of the Hamiltonian is
$$
\alpha^*(x, p) = \frac{-p}{x}\mathbf{1}_{x>0}
$$ 
As previously, we are looking for a quadratic solution of the form
\[U(t,x)=a(t)x^2+b(t)x+c(t).\]
Identifying the monomials, we get that $U$ is solution of $(\textbf{HJB})$ if and only if $(a, b, c)$ is solution of the following system of ODEs.
\begin{equation*}
\textbf{(ODE)}:~\left\{
\begin{array}{ll}
a'(t) + 2a(t) (\nu-\mu) + 2a^2(t) = 0,&~a(T) = -\gamma,\\
b'(t) +2 a(t)b(t)  + a(t) \sigma^2 + b(t) (\nu-\mu) = 0,&~b(T) = 2\gamma \widetilde{x},\\
c'(t) + \frac{|b(t)|^2}2 = 0,&~c(T) = -\gamma \widetilde{x}^2.
\end{array} 
\right.
\end{equation*}

Hence, the optimal control is given by 
$$
\alpha^*_t=-\frac{DU(t,X_t)}{X_t}\mathbf{1}_{X_t>0}=-\frac{2a(t)X_t+b(t)}{X_t}\mathbf{1}_{X_t>0}.
$$
Note that in view of $(\textbf{ODE})$ and since $a(T)$ is negative and $b(T)$ positive, there exists a $T$ small enough such that the control $\alpha^{*}$ is in $\mathcal{A}$, see Appendix \ref{appendix:toy_model} for details. We assume that we are considering such time horizon here. We also note that $a_K=a$ and as consequence of Gr\"onwall Lemma $(b_K,c_K)_{K\geq 0}$ converges to $(b,c)$ when $K$ goes to $+\infty$. Consequently we get the convergence of the value of the control problems, $\lim\limits_{K\to+\infty}V_0^K=V_0$. Moreover a direct adaption of the proof of Proposition \ref{th:rescaling} gives the convergence in law of the optimally controlled population:
$$
\lim\limits_{K\to+\infty}\mathbb{P}^{K,\alpha^{K,*}} = \mathbb{P}^{\alpha^*}.
$$
Those convergences are illustrated in Figures \ref{fig:value} and \ref{fig:control} respectively.

\begin{figure}[tbph!]
\begin{center}
\includegraphics[width=8cm,height=6cm]{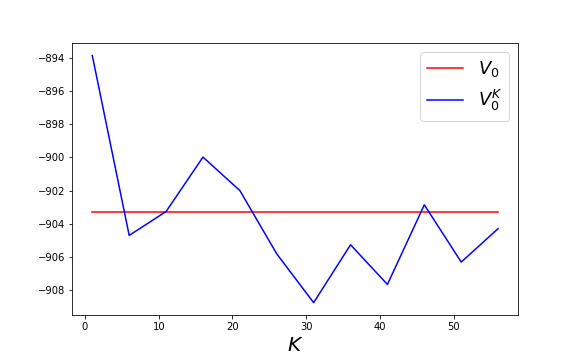}
\caption{Convergence of $(V_0^K)_{K\geq 0}$ towards $V_0$ with $\sigma^2 = 0.3$, $\mu = 0.1$, $\nu = 0.2$, $T = 0.1$, $x_0 = 50$, $\tilde{x} = 20$ and $\gamma = 1$.}
\label{fig:value}
\end{center}
\end{figure}

\begin{figure}[tbph!]
\begin{center}
\includegraphics[width=14cm,height=8cm]{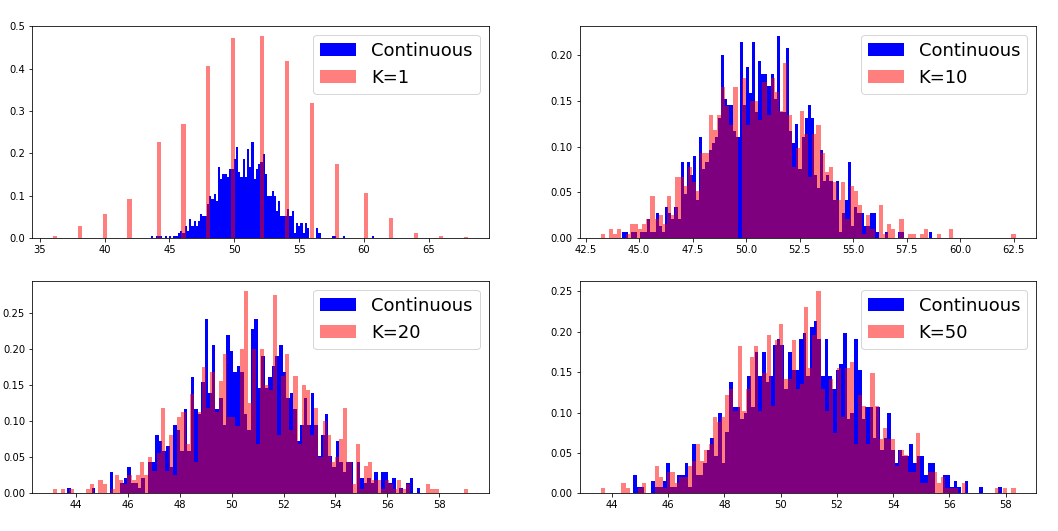}
\caption{Empirical distribution of the discrete optimal controls at time $t = 0.1$ for different values of $K$ (in red) compared to the distribution of the continuous optimal control. The parameters are the same than in Figure \ref{fig:value}}.
\label{fig:control}
\end{center}
\end{figure}

\section{Convergence of BSDEs}
\label{sec:convergence}

In this section we prove the main results of this paper. We recall a convergence result of a sequence of martingale representations given in \cite{papapantoleon201stability}. Then we extend it to the convergence of a sequence of BSDEs driven by the sequence of martingales $(M^K)_{K\geq 0}$.

\subsection{Convergence of martingale representations}
\label{subsec:convergence_representation}

From Theorem 2 in \cite{davis1976representation} we know any $\mathbb{F}^K-$martingale has the representation property with respect to $M^K$ (in the sense of Definition III-4.22 in \cite{jacod2013limit}). Moreover we prove in Appendix \ref{appendix:martingale_representation.} that any $\mathbb{F}^X-$ martingale has the representation property relative to $M^X$.\\

For any $K\geq 0$ we consider $\xi^K \in L^2$ an $\mathcal{F}_T^{K}$-measurable real random variable and $\xi\in L^2$ an $\mathcal{F}_T^{X}$-measurable real random variable. We define the closed martingale $Q^K$ by $Q^K_t = \mathbb{E}[\xi^K|\mathcal{F}^K_t]$, $\mathbb P-$a.s.. Since $Q^K$ is an $\mathbb F^K$-martingale and $\xi^K\in  L^2$, we know that there exists a unique process $Z^K\in L^2(M^K)$ such that 
$$
Q^K_t = \mathbb{E}[\xi^K|\mathcal{F}^K_t] = Q^K_0 + \int_0^t Z^K_s\cdot \mathrm{d}M^K_s.
$$
Similarly considering the $\mathbb F^X$-martingale $Q$ defined by $Q_t = \mathbb{E}[\xi|\mathcal{F}_t]$, $\mathbb P-a.s.$ since $\xi \in \mathbb{L}^2$ we have existence and uniqueness of $Z\in L^2(M^X)$ such that
$$
Q_t = \mathbb{E}[\xi|\mathcal{F}^X_t] = Q_0 + \int_0^tZ_s \mathrm{d}M^X_s.
$$
We have the following result as a mere extension of \cite[Theorem 3.3]{papapantoleon201stability}. 
\begin{proposition}[\textbf{martingale representations convergence}]
\label{prop:convergence_representation}
If the sequence $(\xi^K)_{K\geq 0}$ and $\xi$ are in $L^{2+\varepsilon}$ and $(\xi^K)_{K\geq 0}$ converges towards $\xi$ in $L^{2+\varepsilon}$ for $\varepsilon>0$ then
$$
\Big( Q^K, \langle Q^K, Q^K \rangle, \langle Q^K, \overline{M}^K \rangle \Big) \rightarrow \Big( Q, \langle Q , Q \rangle, \langle Q , M\rangle  \Big) \text{ as }K\rightarrow +\infty
$$
in $\mathcal{S}^{2+\varepsilon'}_1 \times \mathcal{S}^{1+\varepsilon'/2}_1\times \mathcal{S}^{2+\varepsilon'}_2$ for any $\varepsilon'\in [0,\varepsilon)$.
\end{proposition}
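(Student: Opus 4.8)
The plan is to deduce Proposition~\ref{prop:convergence_representation} from \cite[Theorem~3.3]{papapantoleon201stability} by checking that all of its hypotheses are met in our setting, and then to upgrade the mode of convergence from $L^2$ to $\mathcal{S}^{2+\varepsilon'}$-type spaces using the higher integrability assumed on $(\xi^K)_{K\geq 0}$ together with the uniform (exponential, hence polynomial) moment bounds from Proposition~\ref{prop:expo_moments} and Corollary~\ref{cor:expo:feller}. First I would record the structural ingredients: by Theorem~\ref{th:rescaling} the driving processes $(X^K,\overline M^K,\overline N^{K,b},\overline N^{K,d},\overline\Lambda^{K,b},\overline\Lambda^{K,d})$ converge in $\mathcal{S}^2$-sense on the common probability space $(\Omega,\mathbb F,\mathbb P)$ to $(X,M,A,A,A,A)$; by \cite[Theorem~2]{davis1976representation} every $\mathbb F^K$-martingale has the predictable representation property w.r.t.\ $M^K$, and by Appendix~\ref{appendix:martingale_representation.} every $\mathbb F^X$-martingale has the representation property w.r.t.\ $M^X$. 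One also needs the convergence of the filtrations $\mathbb F^K\to\mathbb F^X$ in the sense of Coquet--M\'emin--Slominski; this should follow from the $\mathcal{S}^2$-convergence $X^K\to X$ (convergence of the generating processes implies weak convergence of filtrations under the usual conditions), and I would state it as a preliminary lemma or cite the relevant statement in \cite{coquet2001weak,papapantoleon201stability}.

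Second, I would verify the $L^{2+\varepsilon}$-type hypotheses of \cite[Theorem~3.3]{papapantoleon201stability}: we assume $\xi^K,\xi\in L^{2+\varepsilon}$ with $\xi^K\to\xi$ in $L^{2+\varepsilon}$, which is exactly the strengthened terminal-condition integrability that the cited theorem requires in order to conclude convergence of the closed martingales $Q^K=\mathbb{E}[\xi^K|\mathcal F^K_\cdot]$, of their quadratic variations $\langle Q^K,Q^K\rangle$, and of the cross-brackets $\langle Q^K,\overline M^K\rangle$, to the corresponding objects built from $\xi$ and $M^X$. The only genuinely new points relative to the cited theorem are (a) the driving martingales here are the two-dimensional pure-jump martingales $\overline M^K$ with angle brackets governed by $\overline\Lambda^{K,i}$, whose convergence is precisely the content of Theorem~\ref{th:rescaling}, so the abstract hypotheses on the sequence of driving martingales are satisfied; and (b) we want convergence in $\mathcal{S}_1^{2+\varepsilon'}\times\mathcal{S}_1^{1+\varepsilon'/2}\times\mathcal{S}_2^{2+\varepsilon'}$ rather than merely in $\mathcal{S}^2$. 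For (b) I would argue by interpolation: the cited theorem already gives convergence in (at least) the $\mathcal{S}^2$-topologies, while the sequences $(Q^K)$, $(\langle Q^K,Q^K\rangle)$ and $(\langle Q^K,\overline M^K\rangle)$ are bounded in $\mathcal{S}^{2+\varepsilon}$ uniformly in $K$. The uniform $\mathcal{S}^{2+\varepsilon}$-bound on $Q^K$ is Doob's $L^p$ inequality applied to the martingale closed by $\xi^K$, using $\sup_K\|\xi^K\|_{L^{2+\varepsilon}}<\infty$; the uniform bounds on the bracket processes follow from the Burkholder--Davis--Gundy inequality combined with the explicit form $\mathrm d\langle Q^K,Q^K\rangle_t = |Z^K_t|^2 \,\mathrm d\langle\overline M^K\rangle_t$ and $\mathrm d\langle\overline M^K\rangle_t$ being controlled by $\overline\Lambda^{K,b}_t+\overline\Lambda^{K,d}_t$, which is bounded in $\mathcal{S}^{1+\varepsilon/2}$ thanks to the uniform exponential moments of $\int_0^\cdot X^K_u\,\mathrm du$. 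Then $\mathcal{S}^2$-convergence plus a uniform $\mathcal{S}^{2+\varepsilon}$-bound yields $\mathcal{S}^{2+\varepsilon'}$-convergence for every $\varepsilon'<\varepsilon$ by the standard uniform-integrability/interpolation argument (write $\|U^K-U\|_{\mathcal{S}^{2+\varepsilon'}}^{2+\varepsilon'}\le \|U^K-U\|_{\mathcal{S}^2}^{\theta(2+\varepsilon')}\|U^K-U\|_{\mathcal{S}^{2+\varepsilon}}^{(1-\theta)(2+\varepsilon')}$ for the appropriate $\theta\in(0,1)$ via H\"older, then let $K\to\infty$).

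The main obstacle I expect is matching the precise hypotheses of \cite[Theorem~3.3]{papapantoleon201stability} — which is stated for a general sequence of martingales with predictable representation property — to the two-dimensional birth/death martingales $\overline M^K$ here, in particular making sure that the weak convergence of filtrations, the $\mathcal{S}^2$-convergence of $\overline M^K$ together with its brackets, and the joint convergence with the terminal data $\xi^K$ are all simultaneously in force; this is essentially the bookkeeping of Theorem~\ref{th:rescaling} plus the Appendix~\ref{appendix:martingale_representation.} representation result, but it must be assembled carefully. A secondary technical point is the uniform-in-$K$ $\mathcal{S}^{2+\varepsilon}$ bound on the cross-bracket $\langle Q^K,\overline M^K\rangle$, where one has to control $Z^K$ against the compensator $\overline\Lambda^{K,\cdot}$; the exponential-moment estimates of Section~\ref{subsec:uniform_exponential_moments} are exactly what makes this bound uniform, and I would invoke them at that step. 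Once these two points are in place, the interpolation upgrade and the passage to the limit are routine.
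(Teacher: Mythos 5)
Your proposal follows essentially the same route as the paper, which itself establishes Proposition~\ref{prop:convergence_representation} by invoking \cite[Theorem~3.3]{papapantoleon201stability} (with the hypotheses supplied by Theorem~\ref{th:rescaling}, the representation properties, and filtration convergence) and then using the $L^{2+\varepsilon}$ assumption together with the uniform exponential moments of Proposition~\ref{prop:expo_moments} to push the convergence of $\langle Q^K,\overline M^K\rangle$ beyond $\mathcal S^1_2$; your Doob/BDG/Kunita--Watanabe bounds plus the uniform-integrability upgrade to $\mathcal S^{2+\varepsilon'}$ for $\varepsilon'<\varepsilon$ are exactly the intended extension. The only cosmetic caveat is that the cross-bracket is uniformly bounded in $\mathcal S^{2+\varepsilon''}_2$ for every $\varepsilon''<\varepsilon$ rather than in $\mathcal S^{2+\varepsilon}_2$ itself, which is all the interpolation step needs.
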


Compared to Theorem 5 in \cite{briand2002robustness} or Theorem 3.3 \cite{papapantoleon201stability} we have assumed that the convergence of $(\xi^K)_{K\geq 0}$ takes place in $L^{2+\varepsilon}$ instead in $L^2$. This is in order to extend the convergence of $( \langle Q^K, \overline{M}^K \rangle)_{K\geq 0}$ beyond $\mathcal{S}^1_2$. Indeed if we only assume that $\langle Q^K \rangle \in \mathcal{S}^1_1$, $\langle Q^K, \overline{M}^K \rangle$ is not squared integrable \textit{a priori}. In \cite{briand2002robustness} the authors do not face this issue since they assume that the brackets of the martingales they consider are bounded, see Hypothesis $(H1)$. In our framework the sequence $(\langle \overline{M}^K \rangle_T)_{K\geq 0}$ is not bounded in general. However, if we instead consider a sequence of models with a bounded population then $(\langle \overline{M}^K \rangle_T)_{K\geq 0}$ would be bounded and we could get the same result assuming only the convergence of $(\xi^K)_{K\geq 0}$ in $L^2$ only.\\

\subsection{Convergence of BSDEs}
\label{subsec:convergence_BSDEs}

We now extend the previous result to convergence of a sequence of BSDEs driven by $M^K$.\\

For any $K\geq 0$ we consider an $\mathcal{F}^K_T$ random variable $\xi^K$ and two continuous functions $g^{K}_b$ and $g^{K}_d$ from $\mathbb{R}^3 $ into $\mathbb{R}$. We write for $(x, y, z)\in \mathbb{R}\times \mathbb{R} \times \mathbb{R}^2$
$$
g^K(x, y, z) = \big( g_b^K(x, y, z^b), g_d^K(x, y, z^d) \big).
$$
Note that in the above equation, we implicitly use the decomposition $z=(z^b, z^d)$. We will always assume such convention when we are dealing with a pair of elements such that one element of the pair is related to the birth in the population and the other is related to the death.\\

We introduce the BSDE with generator $g^K$ and terminal value $\xi^K$ by setting
$$
(\mathbf{B})_K:~ Y^K_t = \xi^K + \int_t^T g^K(X^K_s, Y^K_s, Z^{K}_s) \cdot \phi^K_s \mathrm{d}A^K_s - \int_t^T Z^K_s \cdot \mathrm{d} M^K_s.
$$

\begin{definition}
\label{def:BSDE_K}
A solution to BSDE $(\mathbf{B})_{K}$ is a pair of processes $(Y,Z)\in \mathbb{S}^K$ such that the relation $(\mathbf{B})_{K}$ holds $\mathbb P-a.s.$
\end{definition}

As a consequence of Theorem 3.4 in \cite{confortola2013backward} we have the following result.

\begin{lemma}
\label{lemma:bsde_a}
Assume that
\begin{itemize}
\item[(i)] $\xi^K \in \mathbb{T}^K$,
\item[(ii)]there exists a positive constant $L$ such that $\beta > L^2+2L $ and for any $x, y, y', z, z'$ and $K \geq 0$ we have for $j\in\{b, d\}$
\begin{equation*}
K^2|g_j^K(x, y, z/K) - g_j^K(x, y', z'/K)|\leq L\big( |y-y'| + |z-z'|\big),
\end{equation*}
\item[(iii)] $g_j(X^K_t, 0,0)\in \mathbb{H}_1^K$ for $j\in \{b, d\}$,
\end{itemize}
then the BSDE $(\mathbf{B})_K$ has a unique solution $(Y^K, Z^K)\in \mathbb{S}^K$.
\end{lemma}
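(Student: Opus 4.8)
The plan is to recognize $(\mathbf{B})_K$ as a BSDE driven by the marked point process $(N^b,N^d)$ in the setting of Confortola and Fuhrman, and then to invoke \cite[Theorem~3.4]{confortola2013backward}. First I would recall that $\mathbb{F}^K$ is the natural filtration of the $E$-marked point process $m$ associated with $(N^b,N^d)$, $E=\{b,d\}$, whose $\mathbb{P}^K$-compensator is $\pi^K(\mathrm{d}e,\mathrm{d}t)=\big(\phi^{K,b}_t\delta_b(\mathrm{d}e)+\phi^{K,d}_t\delta_d(\mathrm{d}e)\big)\mathrm{d}A^K_t$, as recorded in Section~\ref{subsec:scaling_limit}. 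Setting $q^K:=m-\pi^K$, identifying an $\mathbb{R}^2$-valued process $Z^K$ with the function $e\mapsto Z^K_s(e)$ on $E$, and introducing the scalar generator
$$
\widehat{g}^K(s,\omega,y,\zeta):=g^K\big(X^K_{s-}(\omega),y,\zeta\big)\cdot\phi^K_s(\omega)=\sum_{j\in\{b,d\}}g^K_j\big(X^K_{s-}(\omega),y,\zeta(j)\big)\,\phi^{K,j}_s(\omega),
$$
the identities $\int_t^T Z^K_s\cdot\mathrm{d}M^K_s=\int_{(t,T]}\int_E Z^K_s(e)\,q^K(\mathrm{d}s\,\mathrm{d}e)$ and $g^K(X^K_s,Y^K_s,Z^K_s)\cdot\phi^K_s\,\mathrm{d}A^K_s=\widehat{g}^K(s,Y^K_s,Z^K_s(\cdot))\,\mathrm{d}A^K_s$ turn $(\mathbf{B})_K$ into
$$
Y^K_t+\int_{(t,T]}\int_E Z^K_s(e)\,q^K(\mathrm{d}s\,\mathrm{d}e)=\xi^K+\int_t^T\widehat{g}^K\big(s,Y^K_s,Z^K_s(\cdot)\big)\,\mathrm{d}A^K_s,
$$
which is exactly the class of equations treated in \cite{confortola2013backward}.

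Next I would check the structural assumptions of \cite{confortola2013backward} on the pair $(\pi^K,A^K)$. The process $A^K=\int_0^\cdot(\lambda^{K,b}_s+\lambda^{K,d}_s)K^{-2}\mathrm{d}s$ is continuous, adapted, nondecreasing with $A^K_0=0$, so $\pi^K$ charges no predictable time; one has $\phi^K_s(E)=K^2\in(0,\infty)$; and $\mathbb{E}[N^b_T+N^d_T]=\mathbb{E}\big[\int_0^T(\lambda^{K,b}_s+\lambda^{K,d}_s)\mathrm{d}s\big]\le C_K\,\mathbb{E}\big[\int_0^T X^K_s\,\mathrm{d}s\big]<\infty$ by Proposition~\ref{prop:expo_moments}, so the marked point process is non-explosive on $[0,T]$ (and $A^K_T$ even has exponential moments, which is what makes the weighted spaces of Appendix~\ref{appendix:space} the natural ones here). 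The weak representation property of $q^K$ — equivalently the representation property of $\mathbb{F}^K$-martingales with respect to $M^K$ — is Theorem~2 of \cite{davis1976representation}, already invoked above.

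It then remains to match the data. From assumption~(ii), writing $z^j=\zeta(j)$, using $\phi^{K,j}_s=K^2p^{K,j}_s$ with $p^{K,b}_s+p^{K,d}_s=1$ and Cauchy--Schwarz against the weights $(p^{K,b}_s,p^{K,d}_s)$, one gets for all $y,y',\zeta,\zeta'$
$$
\big|\widehat{g}^K(s,y,\zeta)-\widehat{g}^K(s,y',\zeta')\big|\le L\,|y-y'|+L\Big(\sum_{j\in\{b,d\}}|\zeta(j)-\zeta'(j)|^2\,\phi^{K,j}_s\Big)^{1/2},
$$
i.e. $\widehat{g}^K$ is Lipschitz in $(y,\zeta)$ with constants $L_y=L_z=L$ for the $L^2\big(\phi^K_s(\mathrm{d}e)\big)$-norm, which is precisely the Lipschitz hypothesis of \cite{confortola2013backward}. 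Assumption~(i), $\xi^K\in\mathbb{T}^K$, is the square-integrability of the terminal datum in the $e^{\beta A^K_T}$-weighted space; assumption~(iii), via the definition of $\mathbb{H}_1^K$, places $\widehat{g}^K(\cdot,0,0)=\sum_j g^K_j(X^K_{\cdot-},0,0)\phi^{K,j}$ in the corresponding weighted $L^2(\mathrm{d}A^K\otimes\mathbb{P})$-space; and the Picard fixed-point argument of \cite{confortola2013backward} in the $e^{\beta A^K_\cdot}$-weighted $L^2$-spaces is a contraction under the standard BSDE condition, which with $L_y=L_z=L$ reads $\beta>L^2+2L$ — exactly the standing requirement in~(ii). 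Hence \cite[Theorem~3.4]{confortola2013backward} applies and yields a unique pair $(Y^K,Z^K(\cdot))$ in that space; reading $Z^K=(Z^{K,b},Z^{K,d})$ with $Z^{K,j}_s=Z^K_s(j)$ gives the unique solution of $(\mathbf{B})_K$ in $\mathbb{S}^K$ in the sense of Definition~\ref{def:BSDE_K}.

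\textbf{Main obstacle.} The only genuinely delicate part is the bookkeeping of the $K$-scalings: one must verify that the peculiar-looking assumption~(ii) (the factor $K^2$, the argument $z/K$) becomes precisely the Lipschitz condition of \cite{confortola2013backward} once the generator is rewritten in the $\phi^K\,\mathrm{d}A^K$-integrated form $\widehat{g}^K$, and that the weighted spaces $\mathbb{T}^K$, $\mathbb{H}_1^K$, $\mathbb{S}^K$ of Appendix~\ref{appendix:space} coincide with, or embed continuously into, the $e^{\beta A^K_\cdot}$-weighted spaces used in \cite{confortola2013backward} — the exponential moments of $A^K_T$ from Proposition~\ref{prop:expo_moments} being what guarantees that these spaces actually contain the data of interest (e.g. terminal conditions of the form $\gamma(X^K_T-\widetilde{x})^2$). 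Everything else is a routine check of that paper's standing assumptions.
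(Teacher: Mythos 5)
Your proposal is correct and takes essentially the same route as the paper: the paper proves Lemma \ref{lemma:bsde_a} simply by invoking \cite[Theorem 3.4]{confortola2013backward}, which is exactly the reduction to the marked-point-process framework that you carry out. Your explicit verification that assumption (ii) (with the $K^2$ factor and the $z/K$ argument) becomes the Lipschitz bound in the $L^2\big(\phi^K_s(\mathrm{d}e)\big)$-norm with $L_y=L_z=L$, hence the condition $\beta>L^2+2L$, and that $\mathbb{T}^K$, $\mathbb{H}^K_1$, $\mathbb{S}^K$ are the corresponding $e^{\beta A^K}$-weighted spaces, just supplies the bookkeeping the paper leaves implicit.
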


We also introduce a class of BSDE driven by the martingale $M^X$. For an $\mathcal{F}^X_T$ real valued random variable $\xi$ and a continuous function $g$ from $\mathbb{R}^3$ into $ \mathbb{R}$ we consider the BSDE
$$
(\mathbf{B}):~ Y_t = \xi + \int_t^T g(X_s, Y_s, Z_s)\mathrm{d}A_s - \int_t^T Z_s \mathrm{d} M^X_s.
$$

\begin{definition}
\label{def:BSDE}
A solution to BSDE $(\mathbf{B})$ is a pair of processes $(Y,Z)\in \mathbb{S}$ such that the relation $(\mathbf{B})$ holds $\mathbb P-a.s.$
\end{definition}

We get the following result on existence and uniqueness of solution to $(\mathbf{B})$ which is a consequence of Theorem 6.1 in \cite{el1997general} or Theorem 2.1 in \cite{carbone2008backward}.

\begin{lemma}
\label{lemma:bsde_b}
Assume that
\begin{itemize}
\item[(i)] $\xi\in \mathbb{T}$,
\item[(ii)] there exists a positive constant $L$ such that $\beta > L^2+2L $ and for any $x, y, y', z, z'$ we have:
$$
|g(x, y, z) - g(x, y', z')|\leq L\big( |y-y'| + |z-z'|\big),
$$
\item[(iii)] $g(X_t, 0, 0)\in \mathbb{H}$,
\end{itemize}
then the BSDE $(\mathbf{B})$ has a unique solution $(Y, Z)\in \mathbb{S}$.
\end{lemma}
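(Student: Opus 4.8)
The plan is to present $(\mathbf{B})$ as a BSDE driven by a continuous martingale in the setting of a general existence and uniqueness theorem — e.g. Theorem 2.1 in \cite{carbone2008backward} or Theorem 6.1 in \cite{el1997general} — and to verify that hypotheses (i)--(iii) are exactly the assumptions required there. First recall the structure: $M^X=\int_0^\cdot \sigma(X_s)\,\mathrm dB_s$ is a continuous square-integrable martingale with angle bracket $\langle M^X\rangle=\int_0^\cdot\sigma^2(X_s)\,\mathrm ds=A$, and since $\sigma^2(x)=\sigma^2 x$ here, $A_t=\sigma^2\int_0^t X_s\,\mathrm ds$. Thus $(\mathbf{B})$ is precisely a BSDE whose generator is integrated against the (continuous, increasing, possibly unbounded) process $A=\langle M^X\rangle$. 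Two structural inputs feed the machinery: (a) every $\mathbb F^X$-martingale has the representation property relative to $M^X$, established in Appendix \ref{appendix:martingale_representation.}, which is what produces the control component $Z$; and (b) the map $(s,\omega,y,z)\mapsto g(X_s(\omega),y,z)$ is a legitimate driver field, being continuous in $(y,z)$ and progressively measurable in $(s,\omega)$ because $g$ is continuous and $X$ is continuous and adapted, with value $g(X_s,0,0)$ at the origin.

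Next I would check the stochastic-Lipschitz and integrability conditions in the weighted spaces $\mathbb S,\mathbb T,\mathbb H$ of Appendix \ref{appendix:space}. Assumption (ii) gives $|g(x,y,z)-g(x,y',z')|\le L(|y-y'|+|z-z'|)$ with a \emph{constant} $L$; since the driver is integrated against $\mathrm dA$, the natural exponential weight in the a priori estimates is the one of the form $\exp(\beta A_t)$ (up to the boundary term in the precise norms) that is built into $\mathbb S,\mathbb T,\mathbb H$, and the condition $\beta>L^2+2L$ is exactly the margin under which Itô's formula applied to $\exp(\beta A_t)|Y_t|^2$ absorbs the cross terms produced by the Lipschitz bound and yields a strict contraction for the Picard map sending $(U,V)$ to the solution $(Y,Z)$ of the BSDE with frozen driver $s\mapsto g(X_s,U_s,V_s)$. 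Hypotheses (i) and (iii), namely $\xi\in\mathbb T$ and $g(X_\cdot,0,0)\in\mathbb H$, are precisely the finiteness of the weighted norms of the terminal datum and of the driver at the origin demanded by the theorem; Corollary \ref{cor:expo:feller}, together with $\beta<\beta_0$, ensures the weight has finite exponential moments, so these spaces are nontrivial and the iteration stays inside them and converges. Invoking the cited theorem then delivers a unique pair $(Y,Z)\in\mathbb S$ solving $(\mathbf{B})$, which is the claim.

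The main obstacle is bookkeeping rather than conceptual: one must carefully align the weighted spaces $\mathbb S,\mathbb T,\mathbb H$ with the hypotheses of \cite{carbone2008backward}/\cite{el1997general} — in particular confirming that the weight $\exp(\beta A_t)$ with $A$ absolutely continuous but unbounded meets their ``stochastic Lipschitz'' integrability requirement, and that $\beta>L^2+2L$ is indeed the sharp threshold under their normalization of the a priori inequality. On the structural side, the one genuine subtlety is that $M^X$ stops once $X$ reaches $0$ (since $\sigma(0)=0$ and $f(0)=0$), so the representation property cannot be stated for $\mathbb F^B$ but must be phrased for $\mathbb F^X$; this is the reason input (a) is proved separately in the appendix.
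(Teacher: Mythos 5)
Your proposal is correct and follows essentially the same route as the paper, which obtains the lemma directly as a consequence of Theorem 6.1 in \cite{el1997general} or Theorem 2.1 in \cite{carbone2008backward}; you simply spell out the verification of their hypotheses (martingale representation relative to $M^X$, the stochastic Lipschitz condition with weight $e^{\beta A_t}$ and margin $\beta>L^2+2L$, and the weighted integrability of $\xi$ and $g(X_\cdot,0,0)$), which the paper leaves implicit.
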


We are interested in the convergence of the solutions to $(\mathbf{B})_K$ when $(\xi^K)_{K\geq K}$ and $(g^K)_{K\geq 0}$ converge. Therefore we make the following converging assumptions on the drivers of the BSDEs $(\mathbf{B})_K$.

\begin{assumption}
\label{assumption:bsde_b}~
 \begin{itemize}
\item[(i)] The sequence $(\xi^K)_{K\geq 0}$ converges towards $\xi\in \mathbb{T}$ in $L^{2+\varepsilon}$ for $\varepsilon>0$,
\item[(ii)] there exists a positive constant $C$ such that for any $x, x', y, z$, $K\geq 0$ and $j\in \{b, d\}$
$$
K^2|g_j^K(x, y, z) - g_j^K(x', y, z)|\leq C|x-x'|,
$$
\item[(iii)] there exists a pair of continuous functions $(g_b, g_d)$ from $\mathbb{R}^3$ and a positive sequence $(\upsilon_K)_{K\geq 0}$ converging towards $0$ such that for any $K \geq 0$, $x, y$ and $z$ we have for $j\in\{b, d\}$:
\begin{equation*}
|K^2g_j^K(x, y, z/K) - g_j(x, y, z) | \leq \upsilon_K (1 + x^2 + y^2 + \|z\|^2).
\end{equation*} 
\end{itemize}
\end{assumption}

\begin{remark}
\label{remark:bsde}
Under Assumption \ref{assumption:bsde_b} (iii) if for any $K$ the pair $(g_b^K, g^K_d)$ satisfies assumptions (ii) and (iii) in Lemma \ref{lemma:bsde_a} then the function $g = (g_b+g_d)/2$ satisfies the assumptions (ii) and (iii) in Lemma \ref{lemma:bsde_b}.
\end{remark}

For any $K\geq 0$ we consider $(Y^K, Z^K)\in \mathbb{S}^K$ the unique solution of $(\mathbf{B})_K$. We have the following convergence result for the sequence $(Y^K, Z^K)_{K\geq0}$ whose proof is given in Section \ref{proof:main_theorem}.
\begin{theorem}
\label{th:main_theorem}
Under Assumption \ref{assumption:bsde_b} if the assumptions of Lemma \ref{lemma:bsde_a} are satisfied  for any $K$ then the BSDE driven by $M^X$ with generator $g := \frac{g_b+g_d}{2}$ and terminal value $\xi$ has a unique solution $(Y, Z)$ and we have the following convergence:
$$
\Big(Y^K, \int_0^{\cdot} Z^K_t \cdot \mathrm{d}M^K_t, \langle Y^K , \overline{M}^K \rangle, \langle Y^K \rangle \Big) \rightarrow \Big(Y, \int_0^{\cdot} Z_t \mathrm{d} M_t^X, \langle Y , M \rangle, \langle Y \rangle \Big) \text{ as }K\rightarrow + \infty
$$
in $\mathcal{S}^{2}_1 \times \mathcal{S}^{2}_1 \times \mathcal{S}^{1}_2 \times \mathcal{S}^{1}_1$.
\end{theorem}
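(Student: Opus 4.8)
The plan is to follow the double Picard iteration of \cite{briand2002robustness}, using Proposition \ref{prop:convergence_representation} to pass to the limit in the associated linear (closed-martingale) sub-problems, and the uniform exponential moments of Proposition \ref{prop:expo_moments} to keep every constant independent of $K$. For each $K\ge0$ introduce the Picard scheme $(Y^{K,p},Z^{K,p})_{p\ge0}$ for $(\mathbf{B})_K$, namely $(Y^{K,0},Z^{K,0})=(0,0)$ and
$$
Y^{K,p+1}_t=\xi^K+\int_t^Tg^K(X^K_s,Y^{K,p}_s,Z^{K,p}_s)\cdot\phi^K_s\,\mathrm{d}A^K_s-\int_t^TZ^{K,p+1}_s\cdot\mathrm{d}M^K_s,
$$
together with the analogous scheme $(Y^p,Z^p)_{p\ge0}$ for $(\mathbf{B})$ with generator $g=(g_b+g_d)/2$. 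The point is that $Q^{K,p+1}_t:=Y^{K,p+1}_t+\int_0^tg^K(X^K_s,Y^{K,p}_s,Z^{K,p}_s)\cdot\phi^K_s\,\mathrm{d}A^K_s$ is a closed $\mathbb{F}^K$-martingale whose terminal value $\eta^{K,p}:=\xi^K+\int_0^Tg^K(X^K_s,Y^{K,p}_s,Z^{K,p}_s)\cdot\phi^K_s\,\mathrm{d}A^K_s$ only involves level $p$, and $Q^{K,p+1}=Q^{K,p+1}_0+\int_0^\cdot Z^{K,p+1}_s\cdot\mathrm{d}M^K_s$; hence Proposition \ref{prop:convergence_representation} applies verbatim to it once $\eta^{K,p}$ is controlled.

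I would then establish two facts. \emph{(a) Geometric convergence of the iterates, uniformly in $K$.} The a priori estimates for BSDEs driven by $M^K$ (Theorem 3.4 in \cite{confortola2013backward}) and by $M^X$ (Theorem 6.1 in \cite{el1997general}), under the Lipschitz condition of Lemma \ref{lemma:bsde_a}(ii) with $\beta>L^2+2L$, make $p\mapsto(Y^{K,p},Z^{K,p})$ a contraction in $\mathbb{S}^K$ with a rate $\theta\in(0,1)$ independent of $K$, while Proposition \ref{prop:expo_moments} together with Assumption \ref{assumption:bsde_b} bounds $\sup_K\|(Y^K,Z^K)\|_{\mathbb{S}^K}$; hence $\sup_K\|(Y^K,Z^K)-(Y^{K,p},Z^{K,p})\|_{\mathbb{S}^K}\to0$ and $\|(Y,Z)-(Y^p,Z^p)\|_{\mathbb{S}}\to0$ as $p\to\infty$, and by Doob's inequality, the isometry, and Cauchy--Schwarz (for the brackets) the four processes of the statement built from level $p$ approximate those built from $(Y^K,Z^K)$ uniformly in $K$. \emph{(b) Convergence at each fixed level, by induction on $p$.} The case $p=0$ is trivial; assuming the four processes built from $(Y^{K,p},Z^{K,p})$ converge to those built from $(Y^p,Z^p)$, one first proves $\eta^{K,p}\to\eta^p:=\xi+\int_0^Tg(X_s,Y^p_s,Z^p_s)\,\mathrm{d}A_s$ in $L^{2+\varepsilon'}$ for some $\varepsilon'>0$. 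For this, write $\phi^{K,j}_s\,\mathrm{d}A^K_s=\lambda^{K,j}_s\,\mathrm{d}s$ with $\lambda^{K,j}_s=f^j(X^K_{s-})K+\tfrac{\sigma^2}{2}X^K_{s-}K^2$: the $O(K)$ part is negligible; on the dominant part, Assumption \ref{assumption:bsde_b}(iii) replaces $K^2g^K_j(X^K_s,Y^{K,p}_s,z/K)$ by $g_j(X^K_s,Y^{K,p}_s,z)$ up to an error of order $\upsilon_K(1+(X^K_s)^2+\cdots)$, Assumption \ref{assumption:bsde_b}(ii) replaces $X^K$ by $X$, summation over $j\in\{b,d\}$ produces the factor $\tfrac12$, the measure $\mathrm{d}A=\sigma^2X\,\mathrm{d}s$ and the generator $g$, and the convergences of $\langle Y^{K,p}\rangle$, of $\langle Y^{K,p},\overline{M}^K\rangle$ and of $\int_0^\cdot Z^{K,p}\cdot\mathrm{d}M^K$ in the inductive hypothesis upgrade — via the Hilbert-space identity $\|U-V\|^2=\|U\|^2-2\langle U,V\rangle+\|V\|^2$ — to \emph{strong} $L^2(\mathrm{d}A)$-convergence of the rescaled integrands $KZ^{K,p,b}$ and $KZ^{K,p,d}$ to the \emph{common} limit $Z^p$ (the limiting martingale $M^X$ being one-dimensional, cf.\ Appendix \ref{appendix:martingale_representation.}), which is what lets one pass to the limit inside the nonlinear $g^K$; the quadratic-growth terms are absorbed by Proposition \ref{prop:expo_moments}. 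Then Proposition \ref{prop:convergence_representation}, applied to $Q^{K,p+1}$ with terminal data $\eta^{K,p}\to\eta^p$, yields convergence of $Q^{K,p+1}$, of $\langle Y^{K,p+1}\rangle=\langle Q^{K,p+1}\rangle$, of $\langle Y^{K,p+1},\overline{M}^K\rangle=\langle Q^{K,p+1},\overline{M}^K\rangle$ and of $\int_0^\cdot Z^{K,p+1}_s\cdot\mathrm{d}M^K_s=Q^{K,p+1}-Q^{K,p+1}_0$; subtracting the finite-variation driver term (convergent by the same analysis as $\eta^{K,p}$) gives convergence of $Y^{K,p+1}$. This closes the induction, identifies the limit of the iterates with the Picard scheme of $(\mathbf{B})$ — admissible by Remark \ref{remark:bsde} — and so gives $(Y^p,Z^p)\to(Y,Z)$.

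Combining (a) and (b) by a $3\varepsilon$ argument — fix $p$ so large that the $p$-th iterates approximate the solutions uniformly in $K$, then let $K\to\infty$ at that fixed $p$ — produces the four convergences claimed. I expect the genuine difficulty to lie in step (b), and within it in the $L^{2+\varepsilon'}$-convergence of the driver functionals $\eta^{K,p}$: it requires handling simultaneously the substitution of $X^K$ by $X$, the singular rescaling of $g^K$ against the near-explosive intensities $\lambda^{K,j}_s\sim\tfrac{\sigma^2}{2}X^K_sK^2$, the passage from convergence of brackets to strong convergence of the common rescaled integrand $Z^p$, and the uniform integrability of terms of quadratic growth in $X^K$ — which is precisely where Proposition \ref{prop:expo_moments} and the strengthening of the terminal-condition convergence to $L^{2+\varepsilon}$ (needed to offset the unboundedness of $\langle\overline{M}^K\rangle_T$) are used; a subsidiary point is to verify that $\eta^{K,p}\in L^{2+\varepsilon}$ uniformly in $K$ so that Proposition \ref{prop:convergence_representation} is legitimately applicable at every level.
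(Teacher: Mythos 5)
Your proposal follows essentially the same route as the paper: a double Picard iteration with a contraction rate uniform in $K$, an induction over the iteration level in which the driver functional is shown to converge in $L^{2+\varepsilon'}$ (using Assumption \ref{assumption:bsde_b}, the exponential moments of Proposition \ref{prop:expo_moments}, and the polarization/truncation trick to upgrade the convergence of $\langle Q^{K,p}\rangle$ and $\langle Q^{K,p},\overline{M}^K\rangle$ to convergence of the rescaled integrands) so that Proposition \ref{prop:convergence_representation} can be applied to the closed martingale at level $p+1$, and finally a three-term triangle inequality combining the uniform contraction estimate with the fixed-level convergence. This matches the paper's Steps (i)--(iii), including the bookkeeping of the slowly decreasing integrability exponents across levels, so the approach is correct and essentially identical.
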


The convergence in Theorem \ref{th:main_theorem} implies the following convergence
\begin{align*}
\Big( \int_0^{\cdot} \frac{Z^{K,b}_t}{K}  \lambda^{K,b}_t \mathrm{d}t, \int_0^{\cdot} \frac{Z^{K,d}_t}{K}  \lambda^{K,d}_t \mathrm{d}t, \int_0^{\cdot} |Z^{K}_t|^2 \cdot \phi^K_t  \mathrm{d}A^K_t\Big)  \rightarrow \Big( \int_0^{\cdot} Z_t \mathrm{d}A_t/2, \int_0^{\cdot} Z_t \mathrm{d}A_t/2, \int_0^{\cdot} Z^2_t \mathrm{d}A_t  \Big),
\end{align*}
in $\mathcal{S}^{1}_1\times \mathcal{S}^{1}_1 \times \mathcal{S}^{1}_1$ when $K\rightarrow +\infty$.

\section{Application to a control problem}
\label{sec:application_control}

In this section we apply the results of Section \ref{sec:convergence} to the convergence of a sequence of controls problems.

\subsection{The discrete problem}
\label{subsec:discrete_control}

We first focus on the discrete control problem in the same spirit than Section \ref{sec:toy_model}. We consider that a resource manager monitors his harvesting intensity through a control $\alpha$, which is assumed to be bounded with bounds $\underline a,\overline a>0$. We assume that his harvesting modifies the death rate of the natural resource according to a continuous function $h^K:\mathbb R^+\times [-\underline a,\overline a]\longmapsto \mathbb R$ which satisfies the following assumption.
\begin{assumption}
\label{assumption:harvest_intensity}
There exists a positive constant $C< 2 \beta$ such that for any $(x,\alpha)\in \mathbb R^+\times [-\underline a,\overline a]$
$$
\frac{|h^K(x,\alpha)|^2}{\lambda^{K,d}(x)}\leq Cx \text{ and } Kf^{K, d}(x) + h^K(x, \alpha) \geq 0
$$
with equality if $x=0$.
\end{assumption}
The set of admissible controls is defined by
$$
\mathcal{A}^K = \{\alpha-\mathbb{F}^K~\text{predictable s.t. } \alpha\in[-\underline{a},\overline {a}]  \}.
$$

For any $\alpha\in \mathcal{A}^K$ we define 
$$
L^{K,\alpha}_t = \mathcal E\big( \int_0^{\cdot} \frac{h^K(X_s^K,\alpha_s)}{\lambda^{K,d}_s}\mathrm{d}M_s^{K,d}\big)_t,
$$
 where $\mathcal{E}$ denotes the Doleans-Dade exponential process. We deduce from Assumption \ref{assumption:harvest_intensity}, Proposition \ref{prop:expo_moments} together with \cite[Corollary 2.6]{sokol2013optimal} that $(L^{K,\alpha}_t)_{t\in [0, T]}$ is a true martingale. Hence the law of the population process under the control $\alpha$ is given by $\mathbb{P}^{K, \alpha}$ characterized by
$$
\frac{\mathrm{d}\mathbb{P}^{K, \alpha}}{\mathrm{d}\mathbb{P}} = L^{K,\alpha}_T.
$$
Under the probability $\mathbb{P}^{K, \alpha}$ the death intensity of the population becomes
$$
\lambda^{K, d, \alpha}_t = \lambda^{K, d}_t + h^K(X^K_t, \alpha_t)
$$
and the birth intensity is unchanged.\\

We assume that the manager receives at maturity $T$ a lump sum random compensation $\xi^K\in \mathbb T^K$  for his action. In addition, the manager receives continuous incomes along the time depending on the size of the population and on his control that is given by a function $c^K$ from $\mathbb R^+\times [-\underline a, \overline a]$ into $\mathbb R$. This gain can be negative which corresponds to a cost related to the effort of the manager. This is what we have considered in Section \ref{sec:toy_model}. Therefore, the goal of the manager is to solve the following maximization problem
$$
\textbf{(P)}_K:~V_0^{K} = \underset{\alpha\in \mathcal{A}^K}{\sup} J_0^{K,\alpha} \text{ with }J_0^{K,\alpha}:= \mathbb{E}^{K, \alpha}[ \xi^K + \int_0^T c^K(X^K_s, \alpha_s)\mathrm{d}s ],
$$
where $\mathbb{E}^{K, \alpha}$ denotes the expectation taken under the probability $\mathbb{P}^{K, \alpha}$. Using the notations of Section \ref{subsec:convergence_BSDEs} the BSDE associated to this control problem is
$$
\textbf{(BSDE)}_K:~ Y^K_t = \xi^K + \int_t^T g^K(X^K_s, Z^K_s)\cdot \phi^K_s \mathrm{d}A^K_s - \int_t^T Z^K_s \cdot \mathrm{d}M^K_s
$$
with for any $(x,z)\in \mathbb R^+\times \mathbb R^2$ 
\[g^K(x,z) = (0,g^{K,d}(x,z^d))\; \text{ where } g^{K,d}(x, z^d) =  \sup_{\alpha\in [-\underline a,\overline a]}\Big( c^K\big(x, \alpha\big) + z^d h^K\big(x, \alpha\big) \Big) \frac{1}{\lambda^{K, d}(x)}.\]
We need to assume that the functions $c^K$ and $h^K$ are chosen such that $g^K$ satisfies  the assumptions (ii) and (iii) of Lemma \ref{lemma:bsde_a} and that the maximizer in the above equation is unique. Formally we make the following assumption.
\begin{assumption}
\label{assumption:cv_control_a}~
\begin{itemize}
\item[(i)] $ g^{K,d}(X^K, 0)  \in \mathbb{H}^K_1 $,
\item[(ii)] there exists a positive $L$ satisfying $\beta>L^2 + 2L$ and such that for any $x,z, z',\alpha$ and $K\geq 0$ we have
$$
\frac{|K zh^K(x, \alpha)-K z'h^K(x, \alpha)|}{\lambda^{K,d}(x)}\leq L |z-z'|.
$$
\item[(iii)] For any $x,z$ there exists a unique $\alpha^{K,*}(x, z)$ such that
$$
g^{K,d}(x, z) = \frac{c^K\big( x, \alpha^{K,*}(x, z)\big) + z h^K(x, \alpha^{K,*}(x, z))}{\lambda^{K,d}(x)}.
$$
\end{itemize} 
\end{assumption}

 We thus have the following characterization of the optimal control (we refer to Appendix \ref{proof:pbdiscret} for the proof).

\begin{theorem}[Verification for $\textbf{(P)}_K$]
\label{thm:pbdiscret}
 Let $(Y^K,Z^K)\in \mathbb S^K$ be the unique solution of $\textbf{(BSDE)}_K$. Then $V_0^K=Y_0^K$ and  $\alpha^{K,\star}_t:=\alpha^{K,\star}(X_t^K,Z_t^{K,d})$ solves the problem $\textbf{(P)}_K$.
\end{theorem}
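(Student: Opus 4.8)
The plan is to run the standard BSDE verification argument. Let $(Y^K,Z^K)\in\mathbb{S}^K$ be the unique solution of $\textbf{(BSDE)}_K$, which exists by Lemma \ref{lemma:bsde_a}, whose hypotheses are ensured by the standing assumptions on $\xi^K$, $c^K$, $h^K$ (Assumption \ref{assumption:cv_control_a}). The first step is to rewrite the equation in a more convenient form: since $g^{K,b}\equiv 0$ and, by the definitions of $\phi^K$ and $A^K$, one has $\phi^{K,d}_s\,\mathrm{d}A^K_s=\lambda^{K,d}_s\,\mathrm{d}s$, the BSDE $\textbf{(BSDE)}_K$ reads
\[
Y^K_t=\xi^K+\int_t^T g^{K,d}(X^K_s,Z^{K,d}_s)\,\lambda^{K,d}_s\,\mathrm{d}s-\int_t^T Z^{K,b}_s\,\mathrm{d}M^{K,b}_s-\int_t^T Z^{K,d}_s\,\mathrm{d}M^{K,d}_s .
\]

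Next, fix an admissible control $\alpha\in\mathcal{A}^K$ and work under $\mathbb{P}^{K,\alpha}$. By Girsanov's theorem for point processes applied to the density $L^{K,\alpha}$, the process $M^{K,b}$ remains a $\mathbb{P}^{K,\alpha}$-local martingale, while $\widetilde M^{K,d,\alpha}_t:=M^{K,d}_t-\int_0^t h^K(X^K_s,\alpha_s)\,\mathrm{d}s=N^d_t-\int_0^t\lambda^{K,d,\alpha}_s\,\mathrm{d}s$ is a $\mathbb{P}^{K,\alpha}$-local martingale. Substituting $\mathrm{d}M^{K,d}_s=\mathrm{d}\widetilde M^{K,d,\alpha}_s+h^K(X^K_s,\alpha_s)\,\mathrm{d}s$ turns the driver into $g^{K,d}(X^K_s,Z^{K,d}_s)\lambda^{K,d}_s-Z^{K,d}_s h^K(X^K_s,\alpha_s)$. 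By the very definition of $g^{K,d}$ this quantity is $\geq c^K(X^K_s,\alpha_s)$ for every value $\alpha_s\in[-\underline a,\overline a]$, with equality exactly when $\alpha_s=\alpha^{K,\star}(X^K_s,Z^{K,d}_s)$ (Assumption \ref{assumption:cv_control_a}(iii)). Hence, once the two stochastic integrals are known to be true $\mathbb{P}^{K,\alpha}$-martingales, evaluating the rewritten equation at $t=0$ and taking $\mathbb{E}^{K,\alpha}$ gives $Y^K_0\geq\mathbb{E}^{K,\alpha}[\xi^K+\int_0^T c^K(X^K_s,\alpha_s)\,\mathrm{d}s]=J_0^{K,\alpha}$ for every $\alpha\in\mathcal{A}^K$, with equality for the feedback control $\alpha^{K,\star}_t:=\alpha^{K,\star}(X^K_t,Z^{K,d}_t)$; taking the supremum over $\alpha$ yields $V_0^K=Y_0^K$ and optimality of $\alpha^{K,\star}$. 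One also has to check that $\alpha^{K,\star}$ is admissible: uniqueness of the maximizer together with continuity of $c^K,h^K$ on the compact set $[-\underline a,\overline a]$ makes $(x,z)\mapsto\alpha^{K,\star}(x,z)$ a Borel (in fact continuous) map, so $\alpha^{K,\star}_t$ is $\mathbb{F}^K$-predictable and $[-\underline a,\overline a]$-valued.

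The delicate point, and the main obstacle, is the true-martingale property of $\int_0^\cdot Z^{K,b}_s\,\mathrm{d}M^{K,b}_s$ and $\int_0^\cdot Z^{K,d}_s\,\mathrm{d}\widetilde M^{K,d,\alpha}_s$ under $\mathbb{P}^{K,\alpha}$, since a priori these are only local martingales and mass could be lost along a localizing sequence. I would handle this by the usual localize--dominate--pass to the limit scheme: stop at a localizing sequence $(\tau_n)_n$, take $\mathbb{E}^{K,\alpha}$ to get $Y_0^K=\mathbb{E}^{K,\alpha}[Y^K_{\tau_n}+\int_0^{\tau_n}(\,g^{K,d}(X^K_s,Z^{K,d}_s)\lambda^{K,d}_s-Z^{K,d}_s h^K(X^K_s,\alpha_s)\,)\,\mathrm{d}s]$, and let $n\to\infty$ by dominated convergence. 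The required domination, namely $\mathbb{E}^{K,\alpha}[\sup_{t\le T}|Y^K_t|]<\infty$ and $\mathbb{E}^{K,\alpha}[\int_0^T(|g^{K,d}(X^K_s,Z^{K,d}_s)|\lambda^{K,d}_s+|Z^{K,d}_s|\,|h^K(X^K_s,\alpha_s)|)\,\mathrm{d}s]<\infty$, follows by Cauchy--Schwarz/H\"older from: (a) the norm bounds built into $\mathbb{S}^K$ for $(Y^K,Z^K)$ and the Lipschitz/growth estimates of Assumption \ref{assumption:cv_control_a}, using also $|h^K(x,\alpha)|^2\le Cx\,\lambda^{K,d}(x)$ from Assumption \ref{assumption:harvest_intensity}, which give $\mathbb{P}$-side integrability; and (b) the fact that $L^{K,\alpha}_T$ has finite moments of every order, a consequence of Assumption \ref{assumption:harvest_intensity} (in particular $C<2\beta$), Proposition \ref{prop:expo_moments} and \cite[Corollary 2.6]{sokol2013optimal}, which lets H\"older's inequality transfer those bounds to $\mathbb{P}^{K,\alpha}$. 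Equivalently, one may verify directly that the families $\{\int_0^{\tau}Z^{K,b}_s\,\mathrm{d}M^{K,b}_s\}$ and $\{\int_0^{\tau}Z^{K,d}_s\,\mathrm{d}\widetilde M^{K,d,\alpha}_s\}$ over stopping times $\tau\le T$ are uniformly integrable under $\mathbb{P}^{K,\alpha}$. Everything else, the Girsanov computation and the measurable-selection check, is routine and can be relegated to the appendix.
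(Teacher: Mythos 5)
Your proposal is correct and follows essentially the same route as the paper's proof: write $\textbf{(BSDE)}_K$ at $t=0$ using $g^{K,b}\equiv 0$ and $\phi^{K,d}_s\,\mathrm{d}A^K_s=\lambda^{K,d}_s\,\mathrm{d}s$, use the supremum definition of $g^{K,d}$ to bound the driver below by $c^K(X^K_s,\alpha_s)+Z^{K,d}_s h^K(X^K_s,\alpha_s)$ with equality at $\alpha^{K,\star}$, absorb the $Z^{K,d}h^K$ term into the $\mathbb{P}^{K,\alpha}$-compensated martingale (Girsanov), and take $\mathbb{E}^{K,\alpha}$. The only difference is that you make explicit the localization/uniform-integrability step ensuring the stochastic integrals have zero expectation under $\mathbb{P}^{K,\alpha}$, which the paper takes for granted; this is a welcome addition, though claiming moments of every order for $L^{K,\alpha}_T$ is stronger than needed (and than what Assumption \ref{assumption:harvest_intensity} with $C<2\beta$ obviously yields) — some moment of order $p>1$ combined with the $\mathbb{S}^K$-bounds suffices.
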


We now define the continuous version of this control problem.

\subsection{The continuous problem}
\label{subsec:continuous_control}

As previously the resource manager monitors his harvesting intensity through a control $\alpha$, assumed to be bounded with bounds $\underline a,\overline a>0$. We assume that his harvesting modified the death rate of the natural resource according to a continuous function $h:\mathbb R^+\times [-\underline a,\overline a]\longmapsto \mathbb R$ which is assumed to satisfy the following assumption.
\begin{assumption}
\label{assumption:continuous_control}
There exists a positive constant $C < 2\beta$ such that for any $(x,\alpha)\in \mathbb R^+\times [-\underline a,\overline a]$
$$
\frac{h^2(x,\alpha)}{\sigma^2 x }\leq Cx \text{ and } h(x,\alpha) + f^d(x)\geq 0.
$$
with equality if $x=0$.
\end{assumption}
The set of admissible control is
$$
\mathcal{A} = \{\alpha-\mathbb{F}^X~\text{predictable s.t. } \alpha\in[-\underline{a},\overline {a}] \}.
$$
Considering the process 
$$
L^{\alpha}_t = \mathcal E\big( \int_0^{\cdot} -\frac{ h(X_s,\alpha_s)}{\sigma^2 X_s}\mathrm{d}M_s^{X}\big)_t,
$$
where we recall that $X$ is given by $(\mathbf{S})$. We deduce from Assumption \ref{assumption:continuous_control} and Corollary 2.6 in \cite{sokol2013optimal} that $L^{\alpha}$ is a true martingale. Hence, we define the probability $\mathbb{P}^{\alpha}$ by
$$
\frac{\mathrm{d}\mathbb{P}^{\alpha}}{\mathrm{d}\mathbb{P}} =L^{\alpha}_T
$$
which is the probability measure corresponding to the control $\alpha$. Under $\mathbb{P}^{\alpha}$ the process $X$ is a strong solution of 
\[X_t = x_0 + \int_0^t \big(f(X_s)-h(X_s,\alpha_s)\big)\mathrm{d}s + \int_0^t \sigma \sqrt{X_s}\mathrm{d}B^\alpha_s,\]
where $B^\alpha:=B+\int_0^\cdot \frac{h(X_s,\alpha_s)}{\sigma \sqrt{X_s}}ds$ is a $\mathbb P^\alpha-$Brownian motion.\\

As in the discrete case we assume that the manager receives at maturity $T$ a lump sum random compensation $\xi\in \mathbb T$  for his action. In addition, the manager receives continuous incomes term depending on the size of the population and his control. This term is given by a function $c$ from $\mathbb R^+\times [-\underline a, \overline a]$ into $\mathbb R$. Therefore, the goal of the manager is to solve the following maximization problem
$$
\textbf{(P)}:~V_0 = \underset{\alpha\in \mathcal{A}}{\sup}~ J_0^{\alpha}\text{ with }J_0^{\alpha}:= \mathbb{E}^{\alpha}[ \xi + \int_0^T c(X_s, \alpha_s)\mathrm{d}s ],
$$
where $\mathbb{E}^{ \alpha}$ denotes the expectation taken under the probability $\mathbb{P}^{ \alpha}$. The BSDE associated to this control problem is
$$
\textbf{(BSDE)}:~ Y_t = \xi + \int_t^T g(X_s, Z_s)  \mathrm{d}A_s - \int_t^T Z_s  \mathrm{d}M^X_s
$$
with for any $(x,z)\in \mathbb R^+\times \mathbb R$ 
\[
g(x,z) =  \sup_{\alpha\in [-\underline a,\overline a])}\Big( c\big(x, \alpha\big) + z h\big(x, \alpha\big) \Big) \frac{1}{\sigma^{2} x}.
\]
We need to assume that the functions $c$ and $h$ are chosen such that $g$ satisfies the assumptions (ii) and (iii) of Lemma \ref{lemma:bsde_b} and that the maximizer in the above equation is unique. Formally we make the following assumption.

\begin{assumption}
\label{assumption:cv_control_cont}~
\begin{itemize}
\item[(i)]  $  g(X, 0)  \in \mathbb{H}_1 $,
\item[(ii)] there exists a positive $L$ satisfying $\beta>L^2 + 2L$ such that for any $x,z, z'$ and $\alpha$ we have
$$
\frac{| zh(x, \alpha)-z'h(x, \alpha)|}{\sigma^2 x}\leq L|z-z'|.
$$
\item[(iii)] For any $x,z$ there exists a unique $\alpha^{*}(x, z)$ such that
$$
g(x, z) = \frac{c\big( x, \alpha^{*}(x, z)\big) + z h(x, \alpha^{*}(x, z))}{\sigma^2 x}
$$
\end{itemize} 
\end{assumption}
 We thus have the following characterization of the optimal control (we refer to Appendix \ref{proof:pbcontinuous} for the proof).

\begin{theorem}[Verification for $\textbf{(P)}$]
\label{thm:pbcontinuous}
 Let $(Y,Z)\in \mathbb S$ be the unique solution of $\textbf{(BSDE)}$. Then $V_0=Y_0$ and $\alpha^{\star}_t:=\alpha^{\star}(X_t,Z_t)$ solves the problem $\textbf{(P)}$.
\end{theorem}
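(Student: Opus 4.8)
The plan is to follow the classical verification route for BSDE-based stochastic control, exploiting that the solution $(Y,Z)\in\mathbb S$ of $\textbf{(BSDE)}$ provides, after a Girsanov change of measure, a representation of the payoff $J_0^\alpha$ that is dominated by $Y_0$ and attained at $\alpha^\star$. First I would fix an arbitrary admissible control $\alpha\in\mathcal A$ and write down the dynamics of $Y$ under $\mathbb P^\alpha$. Recall that under $\mathbb P^\alpha$ the process $B^\alpha:=B+\int_0^\cdot \frac{h(X_s,\alpha_s)}{\sigma\sqrt{X_s}}\,\mathrm ds$ is a Brownian motion and $M^X_t=\int_0^t\sigma\sqrt{X_s}\,\mathrm dB_s=\int_0^t\sigma\sqrt{X_s}\,\mathrm dB^\alpha_s-\int_0^t h(X_s,\alpha_s)\,\mathrm ds$, so that $\mathrm dA_t=\sigma^2 X_t\,\mathrm dt$ and
\[
-\int_t^T Z_s\,\mathrm dM^X_s=-\int_t^T Z_s\sigma\sqrt{X_s}\,\mathrm dB^\alpha_s+\int_t^T Z_s h(X_s,\alpha_s)\,\mathrm ds.
\]
Substituting into $\textbf{(BSDE)}$ and taking $t=0$ gives
\[
Y_0=\xi+\int_0^T\Big(g(X_s,Z_s)\sigma^2 X_s+Z_s h(X_s,\alpha_s)\Big)\mathrm ds-\int_0^T Z_s\sigma\sqrt{X_s}\,\mathrm dB^\alpha_s.
\]
By the definition of $g$, for every $\alpha$ we have $g(x,z)\sigma^2 x\ge c(x,\alpha)+z\,h(x,\alpha)$, with equality exactly when $\alpha=\alpha^\star(x,z)$ by Assumption \ref{assumption:cv_control_cont}(iii); hence the integrand is $\ge c(X_s,\alpha_s)$ pointwise, and equals it under $\alpha^\star$.

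The next step is to take $\mathbb P^\alpha$-expectations. I would argue that the stochastic integral $\int_0^\cdot Z_s\sigma\sqrt{X_s}\,\mathrm dB^\alpha_s$ is a true $\mathbb P^\alpha$-martingale: this uses that $(Y,Z)\in\mathbb S$ (so $Z\in\mathbb H$, giving $\mathbb E[\int_0^T Z_s^2\,\mathrm dA_s]<\infty$ under $\mathbb P$), that $L^\alpha$ is a true martingale with good moments thanks to Assumption \ref{assumption:continuous_control} and Corollary 2.6 of \cite{sokol2013optimal}, together with the exponential moments of $X$ from Corollary \ref{cor:expo:feller}; a Cauchy–Schwarz / de la Vallée-Poussin argument (or a localization followed by uniform integrability) then removes the local-martingale caveat. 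Taking expectations yields
\[
Y_0=\mathbb E^\alpha\Big[\xi+\int_0^T\big(g(X_s,Z_s)\sigma^2 X_s+Z_s h(X_s,\alpha_s)\big)\mathrm ds\Big]\ge \mathbb E^\alpha\Big[\xi+\int_0^T c(X_s,\alpha_s)\,\mathrm ds\Big]=J_0^\alpha,
\]
so $Y_0\ge\sup_{\alpha\in\mathcal A}J_0^\alpha=V_0$. Then I would check that $\alpha^\star_t:=\alpha^\star(X_t,Z_t)$ is admissible — it is $\mathbb F^X$-predictable because $(X,Z)$ is and $\alpha^\star(\cdot,\cdot)$ is measurable (uniqueness of the maximizer plus continuity of $c,h$ give measurability via a measurable-selection/Berge-type argument), and it takes values in $[-\underline a,\overline a]$ by construction — and that along $\alpha^\star$ the inequality above is an equality, giving $Y_0=J_0^{\alpha^\star}\le V_0$. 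Combining, $V_0=Y_0$ and $\alpha^\star$ is optimal.

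The main obstacle is the integrability/true-martingale verification for $\int_0^\cdot Z_s\sigma\sqrt{X_s}\,\mathrm dB^\alpha_s$ under the changed measure $\mathbb P^\alpha$: one must control $\mathbb E[L^\alpha_T\cdot(\int_0^T Z_s^2\sigma^2 X_s\,\mathrm ds)]$ and similar cross terms, which is exactly where the boundedness of $\alpha$, the growth condition $h^2(x,\alpha)\le C\sigma^2 x^2$ with $C<2\beta$ in Assumption \ref{assumption:continuous_control}, and the uniform exponential moments of $(X,\int_0^\cdot X_s\,\mathrm ds)$ from Corollary \ref{cor:expo:feller} enter decisively. A secondary technical point is the measurable selection needed to guarantee that $\alpha^\star$ is a bona fide predictable process; this is routine given the stated uniqueness of the maximizer but should be mentioned. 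Everything else is the standard comparison-of-integrands computation, so I would keep it brief and refer to Appendix \ref{proof:pbcontinuous} for the details.
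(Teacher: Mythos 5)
Your plan is correct and follows essentially the same route as the paper's proof in Appendix \ref{proof:pbcontinuous}: use the definition of $g$ as a supremum to compare integrands, rewrite the stochastic integral under $\mathbb{P}^{\alpha}$ via Girsanov, take $\mathbb{P}^{\alpha}$-expectations to get $Y_0\geq J_0^{\alpha}$ for every admissible $\alpha$, and use the unique maximizer from Assumption \ref{assumption:cv_control_cont}(iii) to get $Y_0=J_0^{\alpha^{\star}}$. If anything, you are more explicit than the paper about the true-martingale/integrability justification and the measurability of $\alpha^{\star}$, both of which the paper leaves implicit.
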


\subsection{Convergence of the value functions and of the optimal controls}
\label{subsec:convergence_control}

In this section, we show that under some natural assumptions the sequences of value functions $(V_0^K)_{K\geq 0}$ and of controls $(\alpha^{K,*})_{K\geq 0}$ converge respectively towards $V_0$ and $\alpha^*$. More precisely we consider the following assumptions.

\begin{assumption}
\label{assumption:control}\text{ }

\begin{itemize}
\item[(i)] $(\xi^K)_{K\geq 0}$ converges to $\xi$ in $L^{2+\varepsilon}$ for some $\varepsilon>0$,
\item[(ii)] there exists a positive sequence $(\eta_K)_{K\geq 0}$ that converges towards $0$ such that for any $x, \alpha, z$ and $K$ we have
$$
|K^2\frac{c^K(x, \alpha)}{\lambda^{K,d}(x)} - \frac{c(x, \alpha)}{\sigma^2 x /2}| + | K \frac{h^K(x, \alpha)}{\lambda^{K,d}(x)} - \frac{h(x, \alpha)}{\sigma^2 x /2}| \leq \eta_K(1 + |x| )
$$
and
$$
|\alpha^{K,*}(x, z/K) - \alpha^{*}(x, z)| + \Big| \Big(  \frac{Kh^K(x, \alpha)}{\lambda^{K,d}(x)} \Big)^2 - \Big(  \frac{h(x, \alpha)}{\sigma^2 x /2} \Big)^2 \Big| \leq \eta_K (1 + |x|+ |z|).
$$
\item[(iii)] There exists a positive constant $C>0$ such that for any $x, x', z$ and $K$ we have
$$
K^2|\frac{c^K(x, \alpha) - zK^{-1}h^K(x, \alpha)}{\lambda^{K,d}(x)} - \frac{c^K(x', \alpha) - zK^{-1}h^K(x', \alpha)}{\lambda^{K,d}(x')}| \leq C|x-x'|
$$
and for any $x, x',z,z', \alpha,\alpha'$ and $K$ we have
\begin{align*}
|\alpha^{K,*}(x, \frac{z}{K}) - \alpha^{K,*}(x', \frac{z'}{K})| + \Big| \Big(  \frac{Kh^K(x, \alpha)}{\lambda^{K,d}(x)} \Big)^2 - \Big(    \frac{Kh^K(x', \alpha')}{\lambda^{K,d}(x')} \Big)^2 \Big| \leq  C(|x-x'|+|\alpha-\alpha'| +|z-z'|).
\end{align*}
\end{itemize}
\end{assumption}

Assumption \ref{assumption:control} contains the natural assumptions ensuring that the problem $(\textbf{P})$ is the version of the problems $(\textbf{P})_K$ in the framework of the continuous population model $X$.\\

Using a slight abuse we note $\mathbb{P}^{K, *}$ the law of $X^K$ under the control $\alpha^{K,*}$ and $\mathbb{P}^{*}$ the law of $X$ under the control $\alpha^{*}$. We have the following convergence result which proof is given in Appendix \ref{proof:convergence_control}.
\begin{theorem}
\label{th:convergence_control}
\begin{itemize}
\item[(i)] We have in $\mathcal{S}^2_1\times \mathcal{S}^1_1 \times\mathcal{S}^1_1$:
$$
\lim\limits_{K\to+\infty} \Big( Y^K, \int_0^{\cdot} \alpha^{K,*}_s \lambda^{K, d}_sK^{-2}\mathrm{d}s, \int_0^{\cdot} (\alpha^{K,*}_s)^2 \lambda^{K, d}_sK^{-2}\mathrm{d}_s \Big) = \Big( Y,\int_0^{\cdot} \alpha_s  \mathrm{d}A_s/2, \int_0^{\cdot} \alpha_s^2  \mathrm{d}A_s/2 \Big).
$$ 
\item[(ii)] The sequence $(\mathbb{P}^{K, *})_{K\geq 0}$ converges for the Skorohod topology towards $\mathbb{P}^*$.
\end{itemize}
\end{theorem}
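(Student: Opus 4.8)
Here is how I would approach the proof.

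\medskip
\noindent The plan is to deduce both parts from the BSDE convergence of Section~\ref{sec:convergence} once it is fed into the verification results of Section~\ref{sec:application_control}; the work is then mostly bookkeeping, plus one genuine limit identification.

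\medskip
\noindent\textbf{Step 1: reduction of (i) to Theorem~\ref{th:main_theorem}.} By Theorems~\ref{thm:pbdiscret} and~\ref{thm:pbcontinuous}, $V_0^K=Y_0^K$, $V_0=Y_0$, $\alpha^{K,\star}_t=\alpha^{K,\star}(X^K_t,Z^{K,d}_t)$ and $\alpha^{\star}_t=\alpha^{\star}(X_t,Z_t)$, where $(Y^K,Z^K)$ and $(Y,Z)$ solve $\textbf{(BSDE)}_K$ and $\textbf{(BSDE)}$. I would first check that the generators $g^K=(0,g^{K,d})$ and $g$ satisfy Assumption~\ref{assumption:bsde_b}: item (i) is Assumption~\ref{assumption:control}(i) together with $\xi\in\mathbb{T}$; item (ii) follows from the first bound in Assumption~\ref{assumption:control}(iii) since a supremum of functions that are uniformly Lipschitz in $x$ is Lipschitz in $x$; item (iii) holds with $\upsilon_K:=\eta_K$ and limiting generator $(g_b,g_d):=(0,2g)$ by inserting the first bound of Assumption~\ref{assumption:control}(ii) — the uniform convergence of $K^2c^K/\lambda^{K,d}$ and of $Kh^K/\lambda^{K,d}$ — inside the supremum defining $g^{K,d}$. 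Since $(g_b+g_d)/2=g$, the hypotheses of Lemmas~\ref{lemma:bsde_a} and~\ref{lemma:bsde_b} are exactly Assumptions~\ref{assumption:cv_control_a} and~\ref{assumption:cv_control_cont} (the latter also obtainable from Remark~\ref{remark:bsde}). Theorem~\ref{th:main_theorem} then gives $Y^K\to Y$ in $\mathcal{S}^2_1$ — the first coordinate of (i), and $V_0^K\to V_0$ as a by-product — together with $\int_0^\cdot Z^K_t\cdot\mathrm{d}M^K_t\to\int_0^\cdot Z_t\,\mathrm{d}M^X_t$, $\langle Y^K\rangle\to\langle Y\rangle$, and, through the corollary of Theorem~\ref{th:main_theorem}, $\int_0^\cdot|Z^K_t|^2\cdot\phi^K_t\,\mathrm{d}A^K_t\to\int_0^\cdot Z^2_t\,\mathrm{d}A_t$ and $\int_0^\cdot\frac{Z^{K,d}_t}{K}\lambda^{K,d}_t\,\mathrm{d}t\to\int_0^\cdot Z_t\,\mathrm{d}A_t/2$.

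\medskip
\noindent\textbf{Step 2: the two $\alpha^{K,\star}$-coordinates.} Set $\phi_K(x,\zeta):=\alpha^{K,\star}(x,\zeta/K)$, so $\alpha^{K,\star}_t=\phi_K(X^K_t,KZ^{K,d}_t)$; by Assumption~\ref{assumption:control}(ii)--(iii) the $\phi_K$ are uniformly bounded (valued in $[-\underline a,\overline a]$), uniformly Lipschitz, and converge to $\alpha^{\star}$ locally uniformly. I would first replace $\alpha^{K,\star}_s$ by $\alpha^{\star}(X^K_s,KZ^{K,d}_s)$ in both integrals, the error being bounded by $\eta_K\int_0^T(1+X^K_s+K|Z^{K,d}_s|)\,\mathrm{d}\overline{\Lambda}^{K,d}_s$, which tends to $0$ in $L^1$: indeed $\lambda^{K,d}K^{-2}\le C X^K$, the family $\big(\int_0^T(1+X^K_s)^2\,\mathrm{d}\overline{\Lambda}^{K,d}_s\big)_K$ is uniformly bounded by Proposition~\ref{prop:expo_moments}, $\int_0^T(KZ^{K,d}_s)^2\lambda^{K,d}_sK^{-2}\,\mathrm{d}s\le\langle Y^K\rangle_T$ is uniformly bounded by Step~1, and Cauchy--Schwarz finishes it. It then remains to pass to the limit in $\int_0^\cdot\alpha^{\star}(X^K_s,KZ^{K,d}_s)\,\mathrm{d}\overline{\Lambda}^{K,d}_s$ and in its analogue with $(\alpha^{\star})^2$; these form a relatively compact family in $\mathcal{S}^1_1$ (uniformly Lipschitz in $t$, uniformly $L^1$-bounded densities, again via $\lambda^{K,d}K^{-2}\le CX^K$ and Proposition~\ref{prop:expo_moments}), and I would identify the limit by re-running the double-Picard scheme of Section~\ref{proof:main_theorem}: along the iterates the $Z$-component is an explicit process converging in the relevant spaces, the Lipschitz continuity of $\alpha^{\star}$ turns the $\mathcal{S}^2$/bracket convergence of the $Z$'s into convergence of $\alpha^{\star}$ composed with $(X^K,KZ^{K,\cdot})$, and one lands on $\int_0^\cdot\alpha^{\star}(X_s,Z_s)\,\mathrm{d}A_s/2$ and $\int_0^\cdot\alpha^{\star}(X_s,Z_s)^2\,\mathrm{d}A_s/2$. \emph{This limit identification — pushing the nonlinear map $\alpha^{K,\star}$ through the limit when $Z^{K,d}$ is controlled only in an integrated $L^2$/bracket sense and not pathwise — is the main obstacle}, and absorbs the bulk of the technical work, borrowed from the proof of Theorem~\ref{th:main_theorem}.

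\medskip
\noindent\textbf{Step 3: part (ii).} For $F$ bounded and continuous for the Skorokhod topology I would write $\mathbb{E}^{\mathbb{P}^{K,\star}}[F]=\mathbb{E}[F(X^K)L^{K,\alpha^{K,\star}}_T]$ and $\mathbb{E}^{\mathbb{P}^{\star}}[F]=\mathbb{E}[F(X)L^{\alpha^{\star}}_T]$. By Theorem~\ref{th:rescaling}, $X^K\to X$ in $\mathcal{S}^2_1$, hence, along any subsequence and up to a further one, $\mathbb{P}$-a.s. uniformly, so $F(X^K)\to F(X)$ a.s. as $X$ is continuous. Expanding the Dol\'eans--Dade exponential of $\int_0^\cdot\frac{h^K(X^K_s,\alpha^{K,\star}_s)}{\lambda^{K,d}_s}\mathrm{d}M^{K,d}_s$, and using that the jumps $h^K/\lambda^{K,d}$ are $O(1/K)$ while $N^d$ has $O(K^2)$ jumps on $[0,T]$, one gets that $\log L^{K,\alpha^{K,\star}}_T$ equals, up to an $L^1$-negligible remainder, $\int_0^T\frac{Kh^K(X^K_s,\alpha^{K,\star}_s)}{\lambda^{K,d}_s}\,\mathrm{d}\overline{M}^{K,d}_s-\frac12\int_0^T\frac{(h^K(X^K_s,\alpha^{K,\star}_s))^2}{\lambda^{K,d}_s}\,\mathrm{d}s$; by Assumption~\ref{assumption:control}(ii), the convergence of the compensated death count $\overline{M}^{K,d}$ from Theorem~\ref{th:rescaling}, a stochastic-integral stability argument of the type used there, and Step~2 for the $\alpha^{K,\star}$-dependence, these converge (as functionals of the path of $X^K$) to the stochastic integral and the compensator appearing in $\log L^{\alpha^{\star}}_T$ (as functionals of the path of $X$), whence $L^{K,\alpha^{K,\star}}_T\to L^{\alpha^{\star}}_T$ in probability. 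Uniform integrability of $F(X^K)L^{K,\alpha^{K,\star}}_T$ follows from a uniform $L^p$-bound, $p>1$, on the densities: the requirement $|h^K|^2/\lambda^{K,d}\le Cx$ with $C<2\beta$ in Assumption~\ref{assumption:harvest_intensity} is precisely what makes the uniform exponential moments of $X^K$ of Proposition~\ref{prop:expo_moments} dominate these $L^p$-moments (as in \cite[Corollary~2.6]{sokol2013optimal}). Therefore $\mathbb{E}^{\mathbb{P}^{K,\star}}[F]\to\mathbb{E}^{\mathbb{P}^{\star}}[F]$ along the subsequence, and since the limit is independent of the subsequence, $\mathbb{P}^{K,\star}\to\mathbb{P}^{\star}$ for the Skorokhod topology.
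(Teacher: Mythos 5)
Your overall architecture (verification theorems to reduce (i) to the BSDE convergence, then density convergence plus a change of measure for (ii)) matches the paper, but your Step 2 leaves the central point unproved. After you replace $\alpha^{K,*}_s$ by $\alpha^{*}(X^K_s,KZ^{K,d}_s)$ via Assumption~\ref{assumption:control}(ii), you still must pass to the limit in $\int_0^\cdot \alpha^{*}(X^K_s,KZ^{K,d}_s)\,\lambda^{K,d}_sK^{-2}\mathrm{d}s$, and your proposal to ``re-run the double-Picard scheme'' is exactly the step you do not carry out; as you note yourself, bracket-type convergence of $KZ^{K,d}$ does not by itself let you push a nonlinear map through the integral. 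The paper closes this gap without any new Picard argument: since $\alpha^{K,*}_t=\alpha^{K,*}(X^K_t,Z^{K,d}_t)$, it inserts the comparison process $\alpha^{K,*}(X_t,Z_t/K)$ and uses the uniform Lipschitz bound of Assumption~\ref{assumption:control}(iii) together with Cauchy--Schwarz to dominate the error by $A^K_T\int_0^T\big(|X^K_t-X_t|^2+|KZ^{K,d}_t-Z_t|^2\big)\lambda^{K,d}_tK^{-2}\mathrm{d}t$, which vanishes by Theorem~\ref{th:rescaling} and the $Z$-integral convergences \eqref{eq:convergence_controls} furnished by Theorem~\ref{th:main_theorem}; the two remaining terms are handled by the P-UT/weak-convergence argument of Theorem~\ref{th:rescaling} and by Assumption~\ref{assumption:control}(ii). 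In short, the missing idea is to exploit the Lipschitz property of the \emph{discrete} maximizer map evaluated at the limiting pair $(X,Z/K)$, which reduces everything to an $L^2(\lambda^{K,d}K^{-2}\mathrm{d}s)$-distance between $KZ^{K,d}$ and $Z$ that is already controlled; your route, as written, is both heavier and incomplete.

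In Step 3 your structure (Taylor expansion of the Dol\'eans--Dade exponential, compensation of the quadratic term, convergence of the compensator and of the stochastic integral) is the same as the paper's decomposition into the errors $\varepsilon^K_1,\dots,\varepsilon^K_4$, but your passage from convergence in probability of $L^{K,\alpha^{K,*}}_T$ to convergence of the expectations rests on an asserted uniform $L^p$ bound, $p>1$, on the densities. That bound is not justified: the condition $C<2\beta$ in Assumption~\ref{assumption:harvest_intensity} is used (via Sokol's criterion and Proposition~\ref{prop:expo_moments}) to guarantee the \emph{martingale} property of $L^{K,\alpha}$, not uniform $L^p$ moments, and a power-moment estimate for these jump exponentials would require a separate argument with different constants. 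The paper avoids this entirely with an elementary Scheff\'e-type step: since $\mathbb{E}[L^{K,\alpha^{K,*}}_T]=\mathbb{E}[L^{\alpha^*}_T]=1$ and $L^{K,\alpha^{K,*}}_T\to L^{\alpha^*}_T$ in probability, writing $|L^{\alpha^*}_T-L^{K,\alpha^{K,*}}_T|=2(L^{\alpha^*}_T-L^{K,\alpha^{K,*}}_T)_+-(L^{\alpha^*}_T-L^{K,\alpha^{K,*}}_T)$ yields $L^1$ convergence by dominated convergence on the positive part, which is all that is needed to conclude. You should either adopt that argument or supply a genuine proof of your $L^p$ claim.
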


Since $Y^K_0 = V^K_0$ and $Y_0 = V_0$ a consequence of Theorem \ref{th:convergence_control} (i) is that $(V^K_0)_{K\geq 0}$ converges towards $V_0$. The point (i) also implies that the sequence of controls $(\alpha^{K})_{K\geq 0}$ converges towards the control $\alpha$. But this convergence is in a weak sense and we do not get the convergence of $(\alpha^{K,*})_{K\geq 0}$ towards $\alpha^*$ in law for the Skorohod topology.

\begin{remark}Note that the sequence of control problems considered in Section \ref{sec:toy_model} when $\alpha\in [-\nu, \overline{a}]$ (for $\overline{a}$ positive) satisfy any of the assumptions of Section \ref{sec:application_control}.
\end{remark}

\clearpage

\section{Proofs of Theorems}
\label{sec:proof}

\subsection{Proof of Theorem \ref{th:rescaling}}
\label{appendix:rescalingthm}

We introduce the process
$$
Y^K_t = \int_0^t f(X^K_s) \mathrm{d}s.
$$
The proof is divided in four main steps detailed below.
\begin{enumerate}
\item We prove that $\mathbf{(S)}$ admits a unique strong solution.
\item We show that the sequence $(Y^K, \overline{M}^{K}, \overline{N}^{K,b}, \overline{N}^{K,d}, \overline{\Lambda}^{K,b}, \overline{\Lambda}^{K,d})_{K\geq 0}$ is C-tight.
\item We show that for any limit point $(Y, M, N^b, N^d, \Lambda^b, \Lambda^d)$ of the above sequence we have $\Lambda^b = N^b=\Lambda^d = N^d$ and $Y$ is almost surely differentiable with derivative $X$ weak solution of $(\mathbf{S})$.
\item Finally, we prove that up to a probability space $(\Omega, \mathbb{F}, \mathbb{P})$ such that the above convergence holds in probability, the process  
$
(X^K, \overline{M}^{K}, \overline{N}^{K,b}, \overline{N}^{K,d}, \overline{\Lambda}^{K,b}, \overline{\Lambda}^{K,d} )_{K\geq 0}$ actually converges to $(X, M, A, A, A, A)$ when $K$ goes to $+\infty$ in $\mathcal{S}^2_1 \times \mathcal{S}^2_2 \times \mathcal{S}^1_1 \times \mathcal{S}^1_1 \times \mathcal{S}^1_1 \times \mathcal{S}^1_1$.
\end{enumerate}
\textit{Step 1: Pathwise uniqueness under existence.}
The uniqueness result is a direct consequence of \cite[IX-Theorem 3.5 (ii)]{revuz2013continuous} under Assumption $\textbf{(A)}(ii)-(iii)$. \\

\textit{Step 2: Tightness property.}

In order to show tightness we first show that the sequence $\big(\underset{t\in [0, 1] }{\sup}\mathbb{E}[X^K_t]\big)_{K\geq 0}$ is bounded uniformly with respect to $K$. We have
$$
\mathbb{E}[X^K_t] = n_0 +  \int_0^t \mathbb{E}[f^b(X^K_s) - f^d(X^K_s)]\mathrm{d}s.
$$
Hence, according to $\textbf{(A)}-(iii)$, there exists a positive constant $C$ (independent of $K$) such that
$$
\mathbb{E}[X^K_t] \leq n_0 + \int_0^t C \mathbb{E}[X^K_s] \mathrm{d}s.
$$
By using Gr\"onwall's inequality we deduce that $\big(\underset{t\in [0, 1]}{\sup}\mathbb{E}[X^K_t]\big)_{K\geq 0}$ is bounded uniformly with respect to $K$.\\

We have
$$
 \mathbb{E}[\overline{N}^{K,b}_t] \leq \int_0^t \mathbb{E}\big[  f^b(X^K_t)/K + \frac{\sigma^2(X^K_t)}{2}  \big] \mathrm{d}s,
$$
therefore $\big(\mathbb{E}[ \overline{N}^{K,b}_T] \big)_{K\geq 0}$ is bounded and since $\overline{N}^{K,d} \leq \overline{N}^{K,b}$ then $\big(\mathbb{E}[ \overline{N}^{K,d}_1] \big)_{K\geq 0}$ is also bounded. Moreover since $f(X^K) \leq C X^K$ the sequence $\big(\mathbb{E}[ Y^K_T] \big)_{K\geq 0}$ bounded. Using Theorem VI-3.21 in \cite{jacod2013limit} and that the processes $Y^K$, $\overline{N}^{K,i}$ and $\overline{\Lambda}^{K,i}$ for $i\in\{b,d\}$ are nondecreasing for any $K$ we get that the sequences $(Y^K)_{K\geq 0}$, $(\overline{N}^{K,b})_{K\geq 0}$, $(\overline{N}^{K,d})_{K\geq 0}$, $(\overline{\Lambda}^{K,b})_{K\geq 0}$ and $(\overline{\Lambda}^{K,d})_{K\geq 0}$ are tight.\\

Moreover since $|\Delta \overline{N}^{K,i}| = 1/K^2$ for $i\in\{b,d\}$ and the processes $Y^K$, $\Lambda^{K,b}$ and $\Lambda^{K,d}$ are continuous for any $K$ following Proposition VI-3.26 in \cite{jacod2013limit} we get that the sequences $(Y^K)_{K\geq 0}$, $(\overline{N}^{K,b})_{K\geq 0}$, $(\overline{N}^{K,d})_{K\geq 0}$, $(\overline{\Lambda}^{K,b})_{K\geq 0}$ and $(\overline{\Lambda}^{K,d})_{K\geq 0}$ C-are tight. \\

The tightness of $(\overline{M}^{K,b}, \overline{M}^{K,d})_{K\geq 0}$ then follows from Theorem VI-4.13 in \cite{jacod2013limit} since $\langle \overline{M}^{K,i} \rangle = \overline{\Lambda}^{K,i}$. We then get C-tightness since $|\Delta \overline{M}^{K,i}| \leq K^{-1}$. Since marginal tightness implies tightness (Corollary IV-3.33 in \cite{jacod1975multivariate}) we get that $(Y^K, \overline{M}^{K}, \overline{N}^{K,b}, \overline{N}^{K,d}, \overline{\Lambda}^{K,b}, \overline{\Lambda}^{K,d})_{K\geq 0}$ is C-tight.

\textit{Step 3: convergence of the processes and existence of a solution to $(\mathbf{S})$}

We first show the following lemma:
\begin{lemma}
\label{lemma:ucp}
For $i\in\{b,d\}$ the process $ |\overline{N}^{K,i} - \overline{\Lambda}^{K,i}|$ converges uniformly towards $0$ in probability.
\end{lemma}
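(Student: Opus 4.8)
The statement to prove: for $i \in \{b,d\}$, $\sup_{t \le T} |\overline N^{K,i}_t - \overline\Lambda^{K,i}_t| \to 0$ in probability as $K \to \infty$. Note that $\overline N^{K,i} - \overline\Lambda^{K,i} = N^i K^{-2} - \int_0^\cdot \lambda^{K,i}_s K^{-2}\mathrm{d}s = (N^i - \int_0^\cdot \lambda^{K,i}_s \mathrm{d}s) K^{-2} = M^{K,i} K^{-2} = \overline M^{K,i} K^{-1}$. So the lemma is really the statement that $\sup_{t\le T} |\overline M^{K,i}_t| / K \to 0$ in probability.

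\textbf{Main approach.} I would use Doob's maximal inequality together with the explicit form of the angle bracket. Since $M^{K,i}$ is a (locally) square-integrable martingale with $\langle M^{K,i}\rangle_t = \int_0^t \lambda^{K,i}_s \mathrm{d}s$, we have $\langle \overline M^{K,i}/K \rangle_t = K^{-4}\int_0^t \lambda^{K,i}_s\mathrm{d}s = K^{-2}\overline\Lambda^{K,i}_t$. By Doob's $L^2$-inequality (applied after a localization argument, or directly once we know the relevant expectations are finite),
\[
\mathbb{E}\Big[\sup_{t\le T} \big|\overline M^{K,i}_t/K\big|^2\Big] \le 4\, \mathbb{E}\big[\langle \overline M^{K,i}/K\rangle_T\big] = \frac{4}{K^2}\, \mathbb{E}\big[\overline\Lambda^{K,i}_T\big].
\]
From Step 2 of the present proof we already know that $\big(\mathbb{E}[\overline\Lambda^{K,i}_T]\big)_{K\ge 0}$ is bounded uniformly in $K$ (this follows from the uniform bound on $\sup_{t\le T}\mathbb{E}[X^K_t]$ via Assumption \ref{assumption:model}(i), since $\lambda^{K,i}_s K^{-2} \le f^i(X^K_s)/K + \tfrac12\sigma^2(X^K_s) \le \nu X^K_s / K + \tfrac12\eta(1+X^K_s)$). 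Hence the right-hand side is $O(K^{-2}) \to 0$, which gives convergence in $L^2$ and a fortiori in probability.

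\textbf{Steps in order.} First, I would record the identity $\overline N^{K,i}_t - \overline\Lambda^{K,i}_t = \overline M^{K,i}_t/K$ and note that $\overline M^{K,i}/K = M^{K,i}/K^2$ is a local martingale with angle bracket $K^{-2}\overline\Lambda^{K,i}$. Second, handle integrability: introduce a localizing sequence of stopping times $(\tau_n)$ if needed so that the stopped processes are genuine $L^2$-martingales, apply Doob to $M^{K,i}_{\cdot\wedge\tau_n}/K^2$, then let $n\to\infty$ using monotone convergence on the bracket side (the bracket is nondecreasing) and Fatou on the supremum side; alternatively, observe that $\mathbb{E}[\langle \overline M^{K,i}/K\rangle_T] < \infty$ already by the Step 2 bound, so $\overline M^{K,i}/K$ is a true square-integrable martingale on $[0,T]$ and Doob applies directly. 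Third, invoke the uniform-in-$K$ bound $\sup_K \mathbb{E}[\overline\Lambda^{K,i}_T] =: C < \infty$ established in Step 2, conclude $\mathbb{E}[\sup_{t\le T}|\overline N^{K,i}_t - \overline\Lambda^{K,i}_t|^2] \le 4C/K^2$, and finish by Markov's inequality to get the stated convergence in probability (indeed in $\mathcal{S}^2_1$).

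\textbf{Expected main obstacle.} The only genuinely delicate point is the integrability/localization bookkeeping: a priori $M^{K,i}$ is only a \emph{local} martingale (as stated in Section \ref{subsec:definition_discrete_models}), so one must justify that Doob's inequality applies, which is where the uniform moment bound from Step 2 does the real work — it upgrades local to true square-integrable martingale and simultaneously supplies the $O(K^{-2})$ decay. Everything else is a one-line application of Doob. I would also remark that this $L^2$ estimate is in fact slightly stronger than the claimed ucp convergence and will be reused when upgrading Skorokhod convergence to $\mathcal{S}^p$-convergence later in the proof of Theorem \ref{th:rescaling}.
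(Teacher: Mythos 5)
Your argument is correct and is essentially the paper's own proof: both start from the identity $\overline{N}^{K,i}-\overline{\Lambda}^{K,i}=\overline{M}^{K,i}/K$, apply a maximal inequality to get $\mathbb{E}\big[\sup_{t\le T}|\overline{M}^{K,i}_t/K|^2\big]=O(K^{-2})$ via the uniform-in-$K$ moment bound from Step 2, and conclude with Markov's inequality. The only cosmetic difference is that you use Doob's $L^2$ inequality with the angle bracket $K^{-2}\overline{\Lambda}^{K,i}$ while the paper uses BDG with the quadratic variation $K^{-2}\overline{N}^{K,i}$; since these have the same expectation, the estimates coincide, and your extra care with localization is a welcome (if minor) tightening of the paper's one-line argument.
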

\begin{proof}[Proof of Lemma \ref{lemma:ucp}]
Obviously we have  $ \overline{N}^{K,i}_t -  \overline{\Lambda}^{K,i}_t = \overline{M}^{K,i}_t / K$ so using the BDG inequality we have
$$
\mathbb{E}\big[ \underset{t\in [0, T]}{\sup} \frac{|\overline{M}^{K,i}_t|^2}{K^2} \big] \leq \frac{\mathbb{E}[\overline{N}^{K,i}_T]}{K^2}.
$$
that converges towards $0$. We conclude using Markov inequality.
\end{proof}

In view of the tightness result obtained in Step 1, we denote by $(Y, M, N^b, N^d, \Lambda^b, \Lambda^d)$ a limit point of $(Y^K, \overline{M}^{K}, \overline{N}^{K,b}, \overline{N}^{K,d}, \overline{\Lambda}^{K,b}, \overline{\Lambda}^{K,d})_{K\geq 0}$ with $M = (M^b,M^d)$. By the Skorohod representation theorem since the limit of each marginal is continuous we can consider that $(Y^K, \overline{M}^{K}, \overline{N}^{K,b}, \overline{N}^{K,d})_{K\geq 0}$ converges almost surely and uniformly on $[0, T]$ towards $(Y, M, N^b, N^d)$, \textit{i.e.}
$$ 
\underset{t \in [0, 1]}{\sup }|Y^{ K}_t - Y_t| \underset{K \rightarrow + \infty}{\rightarrow} 0,
$$
and for any $i\in\{b,d\}$
$$
\underset{t \in [0, 1]}{\sup }|\overline{M}^{K,i}_t - M^i_t| \underset{K \rightarrow + \infty}{\rightarrow} 0,\quad\underset{t \in [0, 1]}{\sup }|\overline{N}^{K,i}_t - N^i_t| \underset{K \rightarrow + \infty}{\rightarrow} 0,\quad \underset{t \in [0, 1]}{\sup }|\overline{\Lambda}^{K,i}_t - \Lambda^i_t| \underset{K \rightarrow + \infty}{\rightarrow} 0.
$$
According to Corollary IX-1.19 in \cite{jacod2013limit} we have that $M$ is a local martingales. Moreover we have $[\overline{M}^{K,i}] = \overline{N}^{K,i} $ and $[\overline{M}^{K,b}, \overline{M}^{K,d}] = 0$ so Corollary VI-6.29 in \cite{jacod2013limit} gives $[M^i] = N^i = \Lambda^i$ and $[M^b, M^d] = 0$. Since $M^i$ is a continuous martingale we get
$\langle M^i \rangle = [M^i] = \Lambda^i.$
We also notice that $\mathbb{E}[\overline{\Lambda}^{K,i}_T]$ is uniformly bounded in $K$, so according to Fatou's lemma $\Lambda^i$ is integrable, therefore $M^i$ is a true martingale.\\

We recall that
$$
X^K_t = n_0 + \int_0^t f(X^K_s)\mathrm{d}s + \overline{M}^{K,b}_t - \overline{M}^{K,d}_t.
$$
Then, $X^K$ converges almost surely and uniformly on $[0, T]$ towards 
$$
X_t := n_0 + Y_t + M^b_t - M^d_t
$$
and $Y^K$ converges almost surely uniformly on $[0, T]$ towards
$$
\int_0^{\cdot} f(X_s)\mathrm{d}s.
$$
Since we have
$$
\langle M^b \rangle_t = \langle M^d \rangle_t = \int_0^t \frac{\sigma^2(X_s)}{2} \mathrm{d}s \text{ and }\langle M^b, M^d \rangle = 0
$$
we get from Theorem V-3.9 in \cite{revuz2013continuous} that there exists a bi-dimensional Brownian motion $(B^b, B^d)$ such that
$$
(M^b_t, M^d_t) = \int_0^t \frac{\sigma(X_s)}{\sqrt{2}}\mathrm{d}(B^b_s, B^d_s).
$$
So finally we have shown that 
\begin{equation*}
X_t = n_0 + \int_0^t f(X_s) \mathrm{d}s + \int_0^t \sigma (X_s)\mathrm{d}(\frac{B^b_s + B^d_s}{\sqrt{2}}).
\end{equation*}
This concludes the proof of the first part of Theorem \ref{th:rescaling} since $
(X^K, \overline{M}^{K}, \overline{N}^{K,b}, \overline{N}^{K,d}, \overline{\Lambda}^{K,b}, \overline{\Lambda}^{K,d} )$ converges in law for the Skorohod topology to $(X,M,A,A,A,A)$.\\

\textit{Step 4: convergence of a copy $
(X^K, \overline{M}^{K}, \overline{N}^{K,b}, \overline{N}^{K,d}, \overline{\Lambda}^{K,b}, \overline{\Lambda}^{K,d} )_{K\geq 0}$ in $\mathcal{S}^2_1 \times \mathcal{S}^2_2 \times \mathcal{S}^1_1 \times \mathcal{S}^1_1\times \mathcal{S}^1_1 \times \mathcal{S}^1_1$. }

In view of the conclusion of Step 3, and by the Skorohod representation theorem, there exists a probability space $(\Omega, \mathbb{F}, \mathbb{P})$ and a copy in law of the sequence of processes  
$
(X^K, \overline{M}^{K}, \overline{N}^{K,b}, \overline{N}^{K,d}, \overline{\Lambda}^{K,b}, \overline{\Lambda}^{K,d} )_{K\geq 0}$ that converges in probability toward a copy of $(X, M, A, A, A, A)$ when $K$ goes to $+\infty$. To prove that the convergence actually holds in $\mathcal{S}^2_1 \times \mathcal{S}^2_2 \times \mathcal{S}^1_1 \times \mathcal{S}^1_1 \times \mathcal{S}^1_1 \times \mathcal{S}^1_1$ we show that:
\begin{itemize}
\item[$(i)$] $(\overline{N}^{K, b})_{K\geq 0}$ and $(\overline{N}^{K, b})_{K\geq 0}$ are bounded in $\mathcal{S}^2_1$,
\item[$(ii)$] $(\overline{\Lambda}^{K, b})_{K\geq 0}$ and $(\overline{\Lambda}^{K, d})_{K\geq 0}$ are bounded in $\mathcal{S}^2_1$,
\item[$(iii)$] $(\overline{M}^K)_{K\geq 0}$ is bounded in $\mathcal{S}^4$,
\item[$(iv)$] $(X^{K})_{K\geq 0}$ is bounded in $\mathcal{S}^4_1$.
\end{itemize}
Then we will get the convergence using dominated convergence.

\textit{Proof of $(i)$.}
We write
$$
\underset{t\in [0, T]}{\sup}(\overline{N}^{K, b}_s)^2 =  (\overline{N}^{K, b}_T)^2 = \int_0^T (2\frac{\overline{N}_s^{K, b}}{K^2}+K^{-4})\mathrm{d}N^{K, b}_s.
$$
Therefore we have for a positive constant $C$ independent of $K$ such that
$$
\mathbb{E}[\underset{t\in [0, T]}{\sup}(\overline{N}^{K, b}_s)^2] = \mathbb{E}[\int_0^T (2\overline{N}_s^{K, b}+K^{-2})\lambda^{K, b}_s\mathrm{d}s] \leq \mathbb{E}[\int_0^T (2\overline{N}_s^{K, b}+K^{-2})C X^K_s\mathrm{d}s].
$$
Hence to conclude it is enough to show that  $(\mathbb{E}[\overline{N}^{K, b}_t X^K_t])_{t\in [0, T]}$ is bounded. We have
\begin{align*}
\overline{N}^{K, b}_t X^K_t &= \int_0^t (X^K_sK^{-2}+ \overline{N}^{K, b}_sK^{-1} +K^{-3}) \mathrm{d}N^{K, b}_s - \overline{N}^{K, b}_s K^{-1} \mathrm{d}N^{K, d}_s\\
& = \int_0^t (X^K_sK^{-2} +K^{-3}) \mathrm{d}N^{K, b}_s + \int_0^t \overline{N}^{K, b}_s K^{-1} \mathrm{d}(N^{K, b}_s-N^{K, d}_s).  
\end{align*}
So we get
\begin{align*}
\mathbb{E}[\overline{N}^{K, b}_t X^K_t] &= \mathbb{E}[ \int_0^t (X^K_s  + K^{-1}) K^{-2} \lambda^{K, b}_s \mathrm{d}s + \int_0^t \overline{N}^{K, b}_s K^{-1}( \lambda^{K, b}_s - \lambda^{K, d}_s ) \mathrm{d}s]\\
&\leq \mathbb{E}[ \int_0^t (X^K_s  + K^{-1}) C X^K_s \mathrm{d}s + \int_0^t \overline{N}^{K, b}_s C X^{K}_s \mathrm{d}s].
\end{align*}
Therefore by Proposition \ref{prop:expo_moments} and Gr\"onwall lemma we get point (i) (since the same results follows for $N^{K, d}$).\\

\textit{Proof of $(ii)$.} We have
$$
\underset{t\in [0, T]}{\sup}\overline{\Lambda}^{K, d}_t \leq C \int_0^T X^K_s \mathrm{d}s,
$$
therefore point $(ii)$ follows from Proposition \ref{prop:expo_moments}. Same proof holds for $\overline{N}^{K, d}$.\\

\textit{Proof of $(iii)$. }Using the Burkholder-Davis-Gundy inequality we get
$$
\mathbb{E}[\underset{t\in [0, T]}{\sup}|\overline{M}^{K, b}_t|^4] \leq C \mathbb{E}[(\overline{N}^{K, b}_T)^2].
$$
Therefore because of point $(i)$ we get point $(iii)$. Same proof holds for $\overline{M}^{K, d}$.

\textit{Proof of $(iv)$.} We write
\begin{align*}
X^K_t &= x_0^K + \int_0^{t} (\lambda^{K, b}_s-\lambda^{K, d}_s)K^{-1} \mathrm{d}s + \overline{M}^{K, b}_t - \overline{M}^{K, d}_t \\
&\leq x_0^K + \int_0^{t} C X^K_s \mathrm{d}s + \overline{M}^{K, b}_t - \overline{M}^{K, d}_t.
\end{align*}
So we have
$$
| X^K_t |^4 \leq C\Big( (x_0^K)^4 + \big( \int_0^{T} X^K_s \mathrm{d}s \big)^4 + |\overline{M}^{K, b}_t|^4 + |\overline{M}^{K, d}_t|^4 \Big)
$$
taking the supremum over $t\in [0, T]$ and then the expectation we obtain point $(iv)$ as corollary of Proposition \ref{prop:expo_moments} and point $(iii)$.

\subsection{Proof of Theorem \ref{th:main_theorem}.}
\label{proof:main_theorem}

The proof of Theorem \ref{th:main_theorem} is inspired from the proof of Theorem 12 in \cite{briand2002robustness}.\\

We proceed in $3$ steps:
\begin{itemize}
\item[(i)] We show that there exists $\alpha\in (0, 1)$ and some $\alpha-$contracting functions $(F^K)_{K\geq 0}$ and $F$ such that for any $K$, the unique solution of $(\textbf{B})_K$ is the fixed point of $F^K$ and the fixed point of $F$ is solution to $(\mathbf{B})$.
\item[(ii)] We introduce a double indexed sequence and prove a convergence result by induction.
\item[(iii)] We conclude.
\end{itemize}

\subsubsection{Step $(i)$}
\label{proof:contracting_functions}

For any $K$ we define the function 
$$
\begin{array}{l|rcl}
F^K : & \mathbb{S}^{K} & \longrightarrow & \mathbb{S}^K \\
    & (Y, Z) & \longmapsto & (Y', Z') \end{array}
$$
where $(Y', Z')$ is the unique solution of the BSDE:
$$
Y'_t = \xi^K + \int_t^T g^K(X^K_s, Y_s, Z_s) \cdot \phi^K_s \mathrm{d}A^K_s - \int_t^T Z'_s \cdot \mathrm{d} M^K_s.
$$
Since $(Y, Z)\in \mathbb{S}^K$ and because of assumptions (ii) and (iii) in Lemma \ref{lemma:bsde_a} we have $g^K(X^K, Y, Z) \in \mathbb{H}_2^K$. So we can properly define
$$
Y'_t = \mathbb{E}[ \xi^K + \int_t^T g^K(X^K_s, Y_s, Z_s) \cdot \phi^K_s \mathrm{d}A^K_s |\mathcal{F}^K_t ]
$$
and $Z'$ is the unique process in $\mathbb{H}_2^K$ satisfying
$$
\mathbb{E}[\xi^K + \int_0^T g^K(X^K_s, Y_s, Z_s) \cdot \phi^K_s \mathrm{d}A^K_s |\mathcal{F}^K_t ] = {Y_0'}^K + \int_0^t Z'_s \cdot \mathrm{d} M^K_s.
$$

Consider two pairs $(Y^1, Z^1)$, $(Y^2, Z^2)\in \mathbb{S}^K$ and noting $(\overline{Y}^1, \overline{Z}^1) = F^K(Y^1, Z^1)$ (resp. $(\overline{Y}^2, \overline{Z}^2) = F^K(Y^2, Z^2)$). Using Ito's formula on $e^{\beta A^K_t}|\overline{Y}^1_t - \overline{Y}^2_t|^2$ between $0$ and $T$ we get
\begin{align*}
 - |\overline{Y}^1_0 - \overline{Y}^2_0|^2 =& \int_0^T e^{\beta A^K_t}\Big( \beta |\overline{Y}^1_t - \overline{Y}^2_t|^2 -  2(\overline{Y}^1_t - \overline{Y}^2_t) \big( g_K(X^K_t, Y^1_t, Z^1_t)-g_K(X^K_t, Y^2_t, Z^2_t)\big) \cdot \phi^K_t \Big) \mathrm{d}A^K_t \\
&+ \int_0^T e^{\beta A^K_t}2(\overline{Y}^1_t - \overline{Y}^2_t)(\overline{Z}^{1}_t - \overline{Z}^{2}_t)\cdot \mathrm{d}M^K_t\\
&+ \int_0^T e^{\beta A^K_t} |\overline{Z}^{1,b}_t - \overline{Z}^{2,b}_t |^2 \mathrm{d}N^b_t + \int_0^T e^{\beta A^K_t} |\overline{Z}^{1,d}_t - \overline{Z}^{2,d}_t |^2 \mathrm{d}N^d_t.
\end{align*}
Taking the expectation we get
\begin{align*}
 |\overline{Y}^1_0 - \overline{Y}^2_0|^2 &+ \mathbb{E}[\int_0^T e^{\beta A^K_t} \beta |\overline{Y}^1_t - \overline{Y}^2_t|^2\mathrm{d}A^K_t ]  + \mathbb{E}[\int_0^T e^{\beta A^K_t} |\overline{Z}^{1}_t - \overline{Z}^{2}_t |^2 \cdot \phi^K_t \mathrm{d}A^K_t]\\
 & \leq \mathbb{E}[\int_0^T e^{\beta A^K_t }  2|\overline{Y}^1_t - \overline{Y}^2_t| \big| g^K(X^K_t, Y^1_t, Z^1_t)-g^K(X^K_t, Y^2_t, Z^2_t)\big| \cdot \phi^K_t  \mathrm{d}A^K_t].
\end{align*}
Therefore using the assumptions of Lemma \ref{lemma:bsde_a} together with Young's inequality we get for any positive $\alpha$ and $\gamma$ that
$$
\beta \|\overline{Y}^1 - \overline{Y}^2\|_{\mathbb{H}^K_1}^2 + \|\overline{Z}^1 - \overline{Z}^2\|_{\mathbb{H}^K_2}^2 \leq ( \frac{L}{\gamma} + \frac{L}{\alpha}) \|\overline{Y}^1-\overline{Y}^2\|^2_{\mathbb{H}^K_1}  + L \gamma \|Z^1-Z^2\|^2_{\mathbb{H}^K_2} + L\alpha \|Y^1-Y^2\|^2_{\mathbb{H}^K_1}
$$
or equivalently
$$
(\beta - \frac{L}{\alpha} - \frac{L}{\gamma} ) \|\overline{Y}^1 - \overline{Y}^2\|_{\mathbb{H}^K_1}^2 + \|\overline{Z}^1 - \overline{Z}^2\|_{\mathbb{H}^K_2}^2 \leq  L\gamma \|Z^1-Z^2\|^2_{\mathbb{H}^K_2}  +  L\alpha \|Y^1-Y^2\|^2_{\mathbb{H}^K_1}.
$$
Inspired by the proof of Theorem 3.4 in \cite{confortola2013backward} we choose $\gamma = \alpha/L $ and $\alpha\in (0, 1)$ such that
$$
\beta - \frac{L^2 + L}{\alpha} > L.
$$
We can make such choice since $\beta - L^2 -L>L$. Therefore we obtain
$$
L\|\overline{Y}^1 - \overline{Y}^2\|_{\mathbb{H}^K_1}^2 + \|\overline{Z}^1 - \overline{Z}^2\|_{\mathbb{H}^K_2}^2 \leq \alpha \big( L \|Y^1 - Y^2\|_{\mathbb{H}^K_1}^2 + \|Z^1 - Z^2\|_{\mathbb{H}^K_2}^2 \big).
$$
Therefore for any $K$ the function $F^K$ is an $\alpha-$contraction on $\mathbb{S}^K$ for the norm equivalent to $\|\cdot\|_{\mathbb{S}^K}$ and defined by
$$
\|(Y, Z)\|'_{\mathbb{S}^K} = \Big( L \|Y\|_{\mathbb{H}^K_1}^2 + \|Z\|_{\mathbb{H}^K_2}^2\Big)^{1/2}
$$ 

In the continuous case we consider
$$
\begin{array}{l|rcl}
F : & \mathbb{S} & \longrightarrow & \mathbb{S} \\
    & (Y, Z) & \longmapsto & (Y', Z') \end{array}
$$
where $(Y', Z')$ is the unique solution of the BSDE:
$$
Y'_t = \xi + \int_t^T g(X_s, Y_s, Z_s) \mathrm{d}As - \int_t^T Z'_s \cdot \mathrm{d} M^X_s.
$$
Since $(Y, Z)\in \mathbb{S}$ and because of Remark \ref{remark:bsde} we have $g(X, Y, Z) \in \mathbb{H}_2$. So we can properly define
$$
Y'_t = \mathbb{E}[ \xi + \int_t^T g(X_s, Y_s, Z_s) \cdot \phi_s \mathrm{d}As |\mathcal{F}_t ]
$$
and $Z'$ is the unique process in $\mathbb{H}_2$ satisfying
$$
\mathbb{E}[\xi + \int_t^T g(X_s, Y_s, Z_s) \cdot \phi^K_s \mathrm{d}As |\mathcal{F}_t ] = \int_0^t Z'_s \cdot \mathrm{d} M_s.
$$

Similarly we obtain that $F$ is an $\alpha-$contraction for the equivalent norm on $\mathbb{S}$:
$$
\|(Y, Z)\|'_{\mathbb{S}} = \Big( L \|Y\|_{\mathbb{H}}^2 + \|Z\|_{\mathbb{H}}^2 \Big)^{1/2}.
$$

\subsubsection{Step $(ii)$}
\label{proof:induction}

For any $K\geq 0$ we define the sequence $(Y^{K, p}, Z^{K, p})_{p\geq 0}$ satisfying 
$$
(Y^{K,0}, Z^{K, 0}) = 0 \text{ and } (Y^{K, p+1}, Z^{K, p+1}) = F^K(Y^{K, p}, Z^{K, p}).
$$
We similarly consider the sequence $(Y^p, Z^p)_{p\geq 0}$ defined by 
$$
(Y^0, Z^0) = 0 \text{ and } (Y^{p+1}, Z^{p+1}) = F(Y^p, Z^p).
$$

Since for any $K\geq 0$, $F^K$ is a contraction. For any $K\geq0$ the sequence $(Y^{K,p}, Z^{K,p})_{p\geq 0}$ converges towards $(Y^K, Z^K)$ in $\mathbb{S}^K$. In the same way $(Y^{p}, Z^{p})_{p\geq 0}$ converges towards $(Y, Z)$ in $\mathbb{S}$.\\

We use the following notation:
$$
Q^{K, p+1}_t = \int_0^t Z^{K, p+1}_s \cdot \mathrm{d}M^K_s \text{, } \chi^{K, p}_t:= \int_0^t g_K(X^K_s, Y^{K, p}_s, Z^{K, p}_s)\cdot\phi_s^K \mathrm{d}A^K_s,
$$ 
$$
Q^{p+1}_t = \int_0^t Z^{p+1}_s  \mathrm{d}M^X_s \text{ and }\chi^p_t:=\int_0^t g(X_s, Y^p_s, Z^p_s)\mathrm{d}A_s.
$$
So that we can write:
\begin{equation}
\label{eq:writing_y_k}
Y^{K, p+1}_t = \xi^K + \chi^{K, p}_T - \chi^{K, p}_t - Q^{K, p+1}_T + Q^{K, p+1}_t
\end{equation}
and
\begin{equation}
\label{eq:writing_y}
Y^{p+1}_t = \xi + \chi^p_T - \chi^p_t - Q^{p+1}_T + Q^{p+1}_t.
\end{equation}

We prove by induction that the following convergence holds for any $p$:
$$
\Big( Y^{K, p}, Q^{K, p}, \langle Q^{K, p},\overline{M}^{K}\rangle, \langle Q^{K, p}\rangle \Big) \rightarrow \Big( Y^{p}, Q^p, \langle Q^{p},M\rangle, \langle Q^{p}\rangle \Big)\text{ as }K\rightarrow +\infty 
$$
in $\mathcal{S}^{2+\varepsilon_p}_1 \times \mathcal{S}^{2+\varepsilon_p}_1 \times \mathcal{S}^{2+\varepsilon_p}_2 \times \mathcal{S}^{1+\varepsilon_p/2}_1 $ where $\varepsilon_p = \varepsilon/2^p $.\\

Obviously the result holds for $p=0$. We assume that the converge holds for $p$ and show that it implies the convergence for $p+1$.\\

We write
$$
\mathbb{E}[\xi^K + \chi^{K, p}_T |\mathcal{F}^K_t] = Y_0^{K, p+1} +Q^{K, p+1}_t\text{ and }\mathbb{E}[\xi+\chi^{p}_T|\mathcal{F}^X_t] = Y_0^{p+1} + Q^{p+1}_t.
$$
We prove in Appendix \ref{proof:convergence_integral} that the induction hypothesis implies that $(\chi^{K, p})_{K\geq 0}$ converges towards $\chi^p$ in $\mathcal{S}_1^{2 + \tilde{\varepsilon}_{p}}$ where $\tilde{\varepsilon}_{p} = (\varepsilon_{p} + \varepsilon_{p+1})/2$. Therefore $(\xi^K + \chi^{p, K}_T)_{K\geq 0}$ converges towards $(\xi + \chi^p)$ in $L^{2+\tilde{\varepsilon}_{p}}$. Since $\tilde{\varepsilon}_{p}>\varepsilon_{p+1}$ using Proposition \ref{prop:convergence_representation} we get
$$
\Big( Q^{K, p+1}, \langle Q^{K, p+1}, \overline{M}^K \rangle, \langle Q^{K, p+1} \rangle  \Big) \rightarrow \Big( Q^{p+1}, \langle Q^{p+1}, M \rangle,  \langle Q^{p+1}\rangle \Big)
$$
in $\mathcal{S}^{2+\varepsilon_{p+1}}_1 \times\mathcal{S}^{2+\varepsilon_{p+1}}_2 \times \mathcal{S}^{1+\varepsilon_{p+1}/2}_1$. From equations \eqref{eq:writing_y_k} and \eqref{eq:writing_y} we immediatly get that $(Y^{K, p+1})_{K\geq 0}$ converges towards $Y^p$ in $\mathcal{S}^{2+\varepsilon_{p+1}}_1$. Therefore we get the convergence for $p+1$.

\subsubsection{Step $(iii)$}
\label{proof:conclusion}

Note that a consequence of Step $(i)$ is that for a certain positive constant $C$ we have
\begin{equation}
\label{eq:ineq_contraction}
\|(Y^{K, p}, Z^{K, p}) - (Y^{K}, Z^{K})\|_{\mathbb{S}^K} + \|(Y^{p}, Z^{p}) - (Y, Z)\|_{\mathbb{S}}\leq C \alpha^p.
\end{equation}

We write
$$
\| Q^K - Q \|_{2} \leq  \|Q^p-Q\|_{2} + \|Q^K-Q^{K,p}\|_{2} + \|Q^{K,p}-Q^p\|_{2}.
$$
Notice that according to the BDG inequality there exists a positive constant $C$ such that for any $K$
$$
\|Q^{K, p} -Q^K\|^2_2 + \|Q^{p} -Q \|^2_2 \leq C \big( \|Z^{K, p} - Z^K\|^2_{\mathbb{H}^K}  + \|Z^{p} - Z\|^2_{\mathbb{H}} \big) 
$$
which converges towards $0$ uniformly in $K$ when $p\rightarrow +\infty$ by Equation \eqref{eq:ineq_contraction}. Hence $(Q^K)_{K\geq 0}$ converges in $\mathcal{S}^2_1$ towards $Q$.\\

Similarly we write
$$
\|Y^K-Y\|_{2} \leq \|Y^p-Y\|_{2}  + \|Y^{K, p}-Y^K\|_{2} + \|Y^{K,p}-Y^p\|_{2}.
$$
We proved in the previous section that the last term goes to $0$ when $K\rightarrow + \infty$. Remark that we have
$$
Y^K_t - Y_t^{K, p} = \mathbb{E}[\int_t^T \big( g^K(X^K_s, Y^K_s, Z^K_s)- g^K(X^K_s, Y^{K,p-1}_s, Z^{K, p-1}_s)\big) \cdot \phi^{K}_s \mathrm{d}A^K_s|\mathcal{F}^K_t ]
$$
so using Jensen and Doob's inequality we get
\begin{align*}
\mathbb{E}[\underset{t\in [0, T]}{\sup} e^{\beta A^K_t} |Y^K_t - Y_t^{K, p-1}|^2]& \leq L^2 \mathbb{E}[\int_0^Te^{\beta A^K_s} |Y^K_s-Y^{K,p-1}_s|^2 \mathrm{d}A^K_s +\int_0^T  e^{\beta A^K_s} |Z^K_s-Z^{K,p-1}_s|^2 \cdot \phi^K_s \mathrm{d}A^K_s]\\
& \leq L \|(Y^{K,p-1}, Z^{K,p-1}) - (Y^K, Z^K)\|^2_{\mathbb{S}^K}.
\end{align*}
Therefore $\| Y^K - Y^{K,p} \|_{\mathbb{K}^K}$ goes to $0$ when $p\rightarrow + \infty$. In the same way we get that $\| Y - Y^{p} \|_{\mathbb{K}}$ goes to $0$ when $p\rightarrow + \infty$. So $\|Y^K-Y\|_{2}$ converges towards $0$.\\

Finally notice that,
$$
\langle Y^K, \overline{M}^K\rangle = \langle Q^K, \overline{M}^K\rangle \text{, }\langle Y^K \rangle = \langle Q^K\rangle,~\langle Y, ~M \rangle = \langle Q, M\rangle \text{ and }\langle Y \rangle = \langle Q\rangle.
$$
So the convergence 
$$
\big(\langle Y^K, \overline{M}^K\rangle, \langle Y^K\rangle \big)\rightarrow \big(\langle Y, M \rangle, \langle Y \rangle \big) \text{ as }K\rightarrow +\infty
$$ 
in $\mathcal{S}^1_2 \times \mathcal{S}^1_1$ follows from Proposition 2 in \cite{briand2002robustness} and from the convergence of $(Q^K, \overline{M}^K)_{K\geq 0}$ in $\mathcal{S}^{2}_1 \times \mathcal{S}^2_2$ towards $(Q, M)$.

\subsubsection{Convergence of $(\chi^{p, K})_{K\geq 0}$ towards $\chi^p$}
\label{proof:convergence_integral}

To prove the convergence we first prove that $(\chi^{p, K})_{K\geq 0}$ converges towards $\chi^p$ in probability for the uniform topology. Then we show that $(|\chi^{p, K}|+|\chi^p|)_{K\geq 0}$ is bounded in $\mathcal{S}^{2+\hat{\varepsilon}_{p}}$ where $\hat{\varepsilon}_{p} = (\varepsilon_p + \tilde{\varepsilon}_p )/2> \tilde{\varepsilon}_{p}$. We conclude by dominated convergence.\\

For any $n$ we note $Z^{p, n}=Z^p \mathbf{1}_{|Z^p|<n}$. We write
\begin{align*}
\underset{t\in [0, T]}{\sup} | \phi^K_t - \phi_t |  \leq  \sum_{i = b, d} T^{n, K, p}_{i,1} + T^{n, K, p}_{i,2} + T^{n, K, p}_{i,3} + T^{n, p}_{i,4}/2
\end{align*}
where for $i\in {b, d}$, we recall that $g = (g_b+g_d)/2$,
\begin{align*}
T^{n, K, p}_{i,1} &= \underset{t\in [0, T]}{\sup} |\int_0^t g^K_i(X^K_s, Y^{K, p}_s, Z^{K, p}_s) \phi^{K, i}_s\mathrm{d}A^K_s - \int_0^t g^K_i(X^K_s, Y^{K, p}_s, Z^{p,n}_s/K) \phi^{K, i}_s\mathrm{d}A^K_s| \\
T^{n, K, p}_{i,2} &= \underset{t\in [0, T]}{\sup} |\int_0^t g^K_i(X^K_s, Y^{K, p}_s, Z^{p,n}_s/K) \phi^{K, i}_s\mathrm{d}A^K_s - \int_0^t g_i(X_s, Y^p_s, Z^{p,n}_s) K^{-2}\phi^{K, i}_s\mathrm{d}A^K_s| \\
T^{n, K, p}_{i,3} &= \underset{t\in [0, T]}{\sup} |\int_0^t g_i(X_s, Y^{p}_s, Z^{p,n}_s) K^{-2} \phi^{K, i}_s\mathrm{d}A^K_s - \int_0^t g_i(X_s, Y^{p}_s, Z^{p,n}_s) /2 \mathrm{d}A_s | \\
T^{n, p}_{i,4} &= \underset{t\in [0, T]}{\sup} |\int_0^t g_i(X_s, Y^{p}_s, Z^{p, n}_s) -  g_i(X_s, Y^p_s, Z^{p}_s) \mathrm{d}A_s|.
\end{align*}
For $i\in\{b,d\}$ the sequence $(T^{n,p}_{i, 4})_{n\geq 0}$ obviously converges to $0$ in probability by almost sure convergence as $n$ goes to infinity.\\

The sequence $(\int_0^{\cdot}K^{-2}\phi^{K, i}_s\mathrm{d}A^K_s)_{K\geq 0}$ converges towards $A/2$ in probability for the Skorohod topology and satisfy the P-UT condition by Proposition VI-6.12 in \cite{jacod2013limit}. So for any $n$, $(T^{n,K,p}_{i,3})_{K\geq 0}$ converges towards $0$ in probability (for the Skorohod topology) as a consequence of Theorem VI-6.22 in \cite{jacod2013limit}.\\

For the second term we write
\begin{align*}
|K^2 g^K_b(X^K, Y^{K, p}, Z^{p, n}/K) &- g_b(X, Y^{p}, Z^{p, n})|\\
&\leq  | K^2g^K_b(X^K, Y^{K, p}, Z^{p, n}/K) - K^2 g^K_b(X, Y^{p}, Z^{p, n}/K)| \\
&~~+ |K^2 g^K_b(X, Y^{p}, Z^{p, n}/K) - g_b(X, Y^{p}, Z^{p, n})|.
\end{align*}
So by assumptions of Lemma \ref{lemma:bsde_a} and Assumption \ref{assumption:bsde_b} we get:
\begin{align*}
|K^2  g^K_b(X^K, Y^{K, p}, Z^{p, n}/K)& - g_b(X, Y^{p}, Z^{p, n})|\\
\leq& C |X^K-X| + L|Y^{K, p}-Y^p|  + \upsilon_K (|X|^2 + |Y^p|^2 + n^2) .
\end{align*}
Thus there exits $\tilde{C}>0$
\begin{align*}
|T^{n,K,p}_{b,2}| + |T^{n,K,p}_{d,2}|\leq & \tilde{C} A^K_T \Big(  \underset{t\in [0, T]}{\sup} |X^K_t - X_t|  + \underset{t\in [0, T]}{\sup} |Y^{K, p}_t - Y^p_t| \\
&+  \upsilon_K (\underset{t\in [0, T]}{\sup } |X_t|^2+\underset{t\in [0, T]}{\sup } |Y^p_t|^2+n^2 ) \Big) 
\end{align*}
 which obviously goes to $0$ in probability when $K\rightarrow + \infty$ according to Slutsky's theorem, in view of the induction hypothesis and since $(\upsilon_K)_{K\geq 0}$ goes to $0$.\\

Finally we write:
$$
|K^2g^K_b(X^K_s, Y^{K, p}_s, Z^{K, p}_s) - K^2g^K_b(X^K_s, Y^{K, p}_s, Z^{p,n}_s/K)|^2 \leq L^2 |KZ^{K, p, b}_s - Z^{p,n}_s|^2.
$$
So we have
$$
|T^{n,K,p}_{b,1}| + | T^{n,K,p}_{d,1} |\leq \underset{t\in [0, T]}{\sup} L^2 \int_0^t \big( (KZ^{K, p}_s)^2 + (Z^{p,n}_s)^2 - 2Z^{p,n}_sKZ^{K, p}_s \big) \cdot K^{-2}\phi^{K}_s\mathrm{d}A^K_s
$$
Taking the average and going to the upper limit in $K$ we get by induction hypothesis and from Theorem VI-6.22 in \cite{jacod2013limit} that
\begin{align*}
\underset{K\rightarrow + \infty}{\lim\sup}~ \mathbb{E}[|T^{n,K,p}_{b,1}| + |T^{n,K,p}_{d,1} |] \leq& L^2 \mathbb{E}\big[ \underset{t\in [0, T]}{\sup} \int_0^t \big( (Z^{p}_s)^2 + (Z^{p,n}_s)^2 - 2Z^{p,n}_sZ^{p}_s \big) \mathrm{d}A_s \big]\\
 \leq & L^2 \mathbb{E}\big[ \underset{t\in [0, T]}{\sup} \int_0^t  (Z^{p}_s - Z^{p,n}_s)^2  \mathrm{d}A_s  \big].
\end{align*}
The RHS converges to $0$ when $n\rightarrow+\infty$ by dominated convergence. Hence we have shown that $(\chi^{K, p})_{K \geq 0}$ converges to $\chi^p$ in probability for the uniform convergence.\\

To conclude we show that $(|\chi^{K, p}|+|\chi^p|)_{K\geq 0}$ is bounded in $S^{2+\hat{\varepsilon}_{p}}$. We write
\begin{align*}
\underset{t\in [0, T]}{\sup}|\chi_t^{K, p}| \leq & C A^K_T \Big( 1 + \underset{t\in [0, T]}{\sup }|X^K_t| + \underset{t\in [0, T]}{\sup }|Y^K_t| \Big)  + C  \int_0^T |KZ_s^K|\cdot \frac{\phi^K_s}{K^2} \mathrm{d}A^K_s.
\end{align*}
Using Kunita-Watanabe it is easy to see that the last term is bounded in $L^{2+\hat{\varepsilon}_{p}}$. The other terms are bounded in $L^{2+\hat{\varepsilon}_{p}}$ by induction assumption and Proposition \ref{prop:expo_moments}. So $(\chi^{K, p})_{K\geq 0}$ is bounded in $\mathcal{S}_1^{2+\hat{\varepsilon}_{p}}$. In the same way we show that $\chi^p\in \mathcal{S}_1^{2+\hat{\varepsilon}_{p}}$. Therefore we obtain the convergence of $(\chi^{K, p})_{K \geq 0}$ towards $\chi^p$ in $\mathcal{S}_1^{2+ \tilde{\varepsilon}_{p}}$ by dominated convergence.

\subsection{Proof of Theorem \ref{thm:pbdiscret}}
\label{proof:pbdiscret}

From Assumption \ref{assumption:cv_control_a} (i)-(ii) we get that the generator $g^K$ satisfies the conditions of Lemma \ref{lemma:bsde_a}. Therefore $(\textbf{BSDE})_K$ admits a unique solution $(Y^K, Z^K)\in \mathbb{S}^K$. We consider $\alpha^{K,*}_t = \alpha^{K,*}(X^K_t,Z^K_t)$, and show that $\alpha^{K,*}$ solve the optimal control problem $(\mathbf{C})_K$. Since $\alpha^{K,*}$ is admissible according to Assumption \ref{assumption:cv_control_a} (iii) we have $J_0^{K, \alpha^{K,*}} = Y_0^K$.\\

We now take any $\alpha \in \mathcal{A}^K$ and show that
$$
J_0^{K, \alpha^{K*}} \geq J_0^{K, \alpha}.
$$
We write:
\begin{align*}
J_0^{K, \alpha^{K*}} = \xi^K + \int_0^T  \big( c^K(X^K_t, \alpha^{K*}_t) + Z^{K,d}_t h^K(X^K_t, \alpha^{K*}_t) - c^K(X^K_t, \alpha_t) - Z^{K,d}_t h^K(X^K_t, \alpha_t) \big) \mathrm{d}s \\
 + \int_0^T  \big( c^K(X^K_t, \alpha_t) + Z^{K,d}_t h(X^K_t, \alpha_t) \big) \mathrm{d}s - \int_0^T Z^K_s \cdot \mathrm{d}M^K_s.
\end{align*}
By definition the first integrand term is almost surely non negative and therefore we have
$$
J_0^{K, \alpha^{K*}} \geq \xi^K + \int_0^T  \big( c^K(X^K_t, \alpha_t) + Z^{K,d}_t h^K(X^K_t, \alpha_t) \big) \mathrm{d}s - \int_0^T Z^K_s \cdot \mathrm{d}M^K_s,
$$
or equivalently
$$
J_0^{K, \alpha^{K*}} \geq \xi^K + \int_0^T c^K(X^K_t, \alpha_t) \mathrm{d}s - \int_0^T Z^K_s \cdot \mathrm{d}M^{K, \alpha}_s.
$$
Taking the expectation with respect to $\mathbb{P}^{K,\alpha}$ we get the result.

\subsection{Proof of Theorem \ref{thm:pbcontinuous}}
\label{proof:pbcontinuous}

From Assumption \ref{assumption:cv_control_cont} (i)-(ii) we get that the generator $g$ satisfies the conditions of Lemma \ref{lemma:bsde_b}. Therefore $(\textbf{BSDE})$ admits a unique solution $(Y, Z)\in \mathbb{S}$. We consider $\alpha^{*}_t = \alpha^{*}(X_t,Z_t)$, and show that $\alpha^{*}$ solve the optimal control problem $(\mathbf{C})$. Since $\alpha^{*}$ is admissible according to Assumption \ref{assumption:cv_control_cont} (iii) we have $J_0^{ \alpha^{*}} = Y_0$.\\

We now take any $\alpha \in \mathcal{A}$ and show that
$$
J_0^{\alpha^{*}} \geq J_0^{\alpha}.
$$
We write:
\begin{align*}
J_0^{\alpha^{*}} = \xi + \int_0^T  \big( c(X_t, \alpha^{*}_t) + Z_t h(X_t, \alpha^{*}_t) - c(X_t, \alpha_t) - Z_t h(X_t, \alpha_t) \big) \mathrm{d}s \\
 + \int_0^T  \big( c(X_t, \alpha_t) + Z_t h(X_t, \alpha_t) \big) \mathrm{d}s - \int_0^T Z \mathrm{d}M^X_s.
\end{align*}
By definition the first integrand term is almost surely non negative and therefore we have
$$
J_0^{\alpha^{*}} \geq \xi + \int_0^T  \big( c(X_t, \alpha_t) + Z^{d}_t h(X_t, \alpha_t) \big) \mathrm{d}s - \int_0^T Z_s \cdot \mathrm{d}M^X_s,
$$
or equivalently
$$
J_0^{\alpha^{*}} \geq \xi + \int_0^T c(X_t, \alpha_t) \mathrm{d}s - \int_0^T Z_s \mathrm{d}M^{\alpha}_s.
$$
Taking the expectation with respect to $\mathbb{P}^{\alpha}$ we get the result.

\subsection{Proof of Theorem \ref{th:convergence_control}}
\label{proof:convergence_control}

According to Assumption \ref{assumption:control} the sequences $(\xi^K)_{K\geq 0}$ and $(g^K)_{K\geq 0}$ satisfy the assumptions of Lemma \ref{lemma:branching_a} for any $K$ and Assumption \ref{assumption:bsde_b}. So from Theorem \ref{th:main_theorem} we have in $\mathcal{S}^{2}_1 \times \mathcal{S}^{1}_1 \times \mathcal{S}^{1}_1$
\begin{align}
\nonumber \Big(Y^K, \int_0^{\cdot} Z^{K,d}_s &\lambda^{k,d}_s K^{-1} \mathrm{d}s , \int_0^{\cdot}  |K Z^{K,d}_s|^2 \lambda^{k,d}_s K^{-2} \mathrm{d}s \Big) \\
&\rightarrow \Big( Y, \int_0^{\cdot}  Z_s \sigma^2 X_s /2 \mathrm{d}s, \int_0^{\cdot} Z^2_s \sigma^2 X_s /2 \mathrm{d}s \Big) \text{ as } K\rightarrow + \infty. \label{eq:convergence_controls}
\end{align}

\subsubsection{Proof of point (i)}
\label{proof:convergence_control_i}

We write
\begin{align*}
\Big| \int_0^T  \alpha^{K,*}_t \lambda^{K,d}_s K^{-2} \mathrm{d}s - \int_0^T \alpha^{*}_t  \mathrm{d}A_s/2 \Big|  \leq& \int_0^T |\alpha^{K,*}_t- \alpha^{K,*}(X_t, Z_t/K)| \lambda^{K,d}_sK^{-2} \mathrm{d}s\\
&+ \Big| \int_0^T \alpha^{K,*}(X_t, Z_t/K) \lambda^{K,d}_sK^{-2} \mathrm{d}s - \int_0^T \alpha^{K,*}(X_t, Z_t/K) \mathrm{d}A_s/2 \Big| \\
& +  \int_0^T \big| \alpha^{K,*}(X_t, Z_t/K) - \alpha^*_t \big| \mathrm{d}A_t/2. 
\end{align*}
The second term converges towards $0$ by Theorem \ref{th:rescaling}. The last one terms goes to $0$ from Assumption \ref{assumption:control} (ii). Using Assumption \ref{assumption:control} (iii) we can dominate the first term by
\begin{align*}
\int_0^T |\alpha^{K,*}_t- \alpha^{K,*}(X_t, Z_t/K)| \lambda^{K,d}_sK^{-2} \mathrm{d}s &\leq A^K_T \int_0^T |\alpha^{K,*}_t- \alpha^{K,*}(X_t, Z_t/K)|^2 \lambda^{K,d}_sK^{-2} \mathrm{d}s\\
& \leq A^K_T \int_0^T \big( |X^K_t - X_t|^2 + |KZ^K_t - Z_t|^2 \big)\lambda^{K,d}_sK^{-2} \mathrm{d}s.
\end{align*}
that goes to $0$ according to Theorem \ref{th:rescaling} and to the convergence \eqref{eq:convergence_controls}.\\

In the same way, using that the control is bounded, we get that in probability
$$
\int_0^T  |\alpha^{K,*}_t|^2 \lambda^{K,d}_s K^{-2} \mathrm{d}s \rightarrow \int_0^T |\alpha^{*}_t|^2  \mathrm{d}A_s/2 \text{ as }K\rightarrow +\infty.
$$
We then extend the convergences to $\mathcal{S}^1_2$ by uniform integrability since the control is bounded. Thus we get the first statement of Theorem \ref{th:convergence_control}.\\

\subsubsection{Proof of point (ii)}
\label{proof:convergence_control_ii}

We consider $(t_i)_{1\leq i \leq n}\in [0, T]^n$ and a bounded continuous function $f$ defined from $\mathbb{R}^n$ into $\mathbb{R}$. We show that
$$
\mathbb{E}^{K, *}[f(X^K_{t_1},\dots,X^K_{t_n})] \rightarrow \mathbb{E}^{*}[f(X_{t_1},\dots,X_{t_n})] \text{ as } K\rightarrow + \infty
$$
where $\mathbb{E}^{K, *}$ (resp. $\mathbb{E}^{*}$) denotes the expectation under the control $\alpha^{K,*}$ (resp. $\alpha^*$). We write
$$
\mathbb{E}^{K, *}[f(X^K_{t_1},\dots,X^K_{t_n})] = \mathbb{E}[f(X^K_{t_1},\dots,X^K_{t_n})L^{K,\alpha^{K,*}}_T]
$$
and
$$
\mathbb{E}^{*}[f(X^K_{t_1},\dots,X^K_{t_n})] = \mathbb{E}[f(X_{t_1},\dots,X_{t_n})L^{\alpha^*}_T].
$$

Suppose we have shown that $(L^{K, \alpha^{K,*}}_T)_{K\geq 0}$ converges in probability surely towards $L^*_T$. Then writing 
$$
|L_T^{\alpha^*} - L_T^{k,\alpha^{K,*}}|  = 2(L_T^{\alpha^*} - L_T^{k,\alpha^{K,*}})_+ - (L_T^{\alpha^*} - L_T^{k,\alpha^{K,*}})
$$
we get that $(L_T^{k,\alpha^{K,*}})_{K\geq 0}$ converges towards $L_T^{\alpha^*}$ in $L^1$ by dominated converges and since 
$$
\mathbb{E}[L_T^{\alpha^*}] = \mathbb{E}[L_T^{K,\alpha^{K,*}}] = 1.
$$
Then we conclude noticing that:
$$
|f(X_{t_1},\dots,X_{t_n})L^{\alpha^*}_T - f(X^K_{t_1},\dots,X^K_{t_n})L^{K,\alpha^{K,*}}_T |\leq |f(X_{t_1},\dots,X_{t_n}) - f(X^K_{t_1},\dots,X^K_{t_n})|L^{\alpha^*}_T + \|f\|_{\infty} |L_T^{\alpha^*} - L_T^{k,\alpha^{K,*}}|
$$

We finally prove the convergence of $(L^{K, \alpha^{K,*}}_T)_{K\geq 0}$ towards $L^*_T$ in probability. We introduce the following sequences
\begin{align*}
\varepsilon^K_1 &= \int_0^T  \log\big(1+\frac{h^K(X^K_s, \alpha^{K,*}_s)}{\lambda^{K,d}_s}\big)\mathrm{d}N^{K,d}_s  - \int_0^T \frac{h^K(X^K_s, \alpha^{K,*}_s)}{\lambda^{K,d}_s} -\frac{1}{2} \Big( \frac{h^K(X^K_s, \alpha^{K,*}_s)}{\lambda^{K,d}_s}\Big)^2\mathrm{d}N^{K,d}_s,\\
\varepsilon^K_2 &= \int_0^T \Big( \frac{h^K(X^K_s, \alpha^{K,*}_s)}{\lambda^{K,d}_s}\Big)^2\mathrm{d}N^{K,d}_s - \int_0^T \big( \frac{h^K(X^K_s, \alpha^{K,*}_s)}{\lambda^{K,d}_s}\big)^2 \lambda^{K,d}_s \mathrm{d}s,\\
\varepsilon^K_3 &= \int_0^T \big( \frac{h^K(X^K_s, \alpha^{K,*}_s)}{\lambda^{K,d}_s}\big)^2 \lambda^{K,d}_s \mathrm{d}s - \int_0^T \big( \frac{h(X_s, \alpha^{*}_s)}{\sigma^2 X_s /2} \big)^2 \mathrm{d}A_s/2,\\
\varepsilon^K_4 &= \int_0^T \frac{h^K(X^K_s,\alpha^{K,*}_s )}{\lambda^{K,d}_s} \mathrm{d}M^{K,d}_s  - \int_0^T \frac{h(X_s, \alpha^{*}_s)}{\sigma^2 X_s /2}\mathrm{d}M^d_s
\end{align*}
and show that they all converges to $0$ in probability.\\

For some $C>0$ independent of $K$ we have
$$
|\varepsilon^K_1| \leq  C \int_0^T  \Big(\frac{h^K(X^K_s, \alpha^{K,*}_s)}{\lambda^{K,d}_s}\Big)^3 \mathrm{d}N^{K,d}_s \mathbf{1}_{\underset{s\in [0, T]}{\sup}\big| \frac{h^K(X^K_s, \alpha^{K,*}_s)}{\lambda^{K,d}_s} \big|<1} +   |\varepsilon^K_1| \mathbf{1}_{\underset{s\in [0, T]}{\sup} \big| \frac{h^K(X^K_s, \alpha^{K,*}_s)}{\lambda^{K,d}_s} \big|>1}.
$$
The first term of the RHS converges towards $0$ in probability according to Markov inequality. The second one since $\underset{s\in [0, T]}{\sup} \big| \frac{h^K(X^K_s, \alpha^{K,*}_s)}{\lambda^{K,d}_s} \big|$ converges almost surely towards $0$ from Assumption \ref{assumption:control}-(i). Remark that
$$
\varepsilon^K_2 = \int_0^T \Big( \frac{h^K(X^K_s, \alpha^{K,*}_s)}{\lambda^{K,d}_s}\Big)^2\mathrm{d}M^{K,d}_s
$$
Consequently using Assumption \ref{assumption:control} (iii) and  Tchebychev inequality we get that $(\varepsilon^K_2)_{K\geq 0}$ converges towards $0$ in probability. Notice that we have
\begin{align*}
\varepsilon^K_3  =&  \int_0^T \Big[ \Big( \frac{Kh^K(X^K_s, \alpha^{K,*}_s)}{\lambda^{K,d}_s}\Big)^2 -  \Big( \frac{Kh^K(X_s, \alpha^{K,*}(X_s, Z_s/K))}{\lambda^{K,d}(X_s)}\Big)^2 \Big] K^{-2}\lambda^{K,d}_s \mathrm{d}s\\
& + \int_0^T \Big( \frac{K h^K(X_s, \alpha^{K,*}(X_s, Z_s/K))}{\lambda^{K,d}(X_s)}\Big)^2 K^{-2} \lambda^{K,d}_s \mathrm{d}s - \int_0^T \Big( \frac{K h^K(X_s, \alpha^{K,*}(X_s, Z_s/K))}{\lambda^{K,d}(X_s)}\Big)^2 \mathrm{d}A_s/2 \\
&+  \int_0^T \Big( \frac{K h^K(X_s, \alpha^{K,*}(X_s, Z_s/K))}{\lambda^{K,d}(X_s)}\Big)^2 - \Big( \frac{h(X_s, \alpha^{*}_s)}{\sigma^2 X_s/2} \Big)^2  \mathrm{d}A_s/2.
\end{align*}
The second and last terms go to $0$ in probability by Theorem \ref{th:rescaling}, Assumption \ref{assumption:control} (ii) and from Proposition VI-6.12 and Theorem VI-6.22 in \cite{jacod2013limit}. As we did in the proof of Theorem \ref{th:convergence_control} (i), the first term goes to $0$ from Cauchy Schwarz inequality Assumption \ref{assumption:control} (iii) together with the convergence \eqref{eq:convergence_controls}.\\

Finally we write 
\begin{align*}
\varepsilon^K_4 \leq & \Big| \int_0^T \frac{Kh^K(X^K_s, \alpha^{K,*}_s)}{\lambda^{K,d}(X^K_s)} - \frac{Kh^K(X_s, \alpha^{*}_s)}{\lambda^{K,d}(X_s)} \mathrm{d}M^{K,d}_s \Big|\\
&+ \Big| \int_0^T \frac{K h^K(X_s, \alpha^{*}_s)}{\lambda^{K,d}(X_s)}\mathrm{d}\overline{M}^K_s - \int_0^T \frac{K h^K(X_s, \alpha^{*}_s)}{\lambda^{K,d}(X_s)} \mathrm{d}M_s \Big|\\
&+ \Big| \int_0^T \frac{h(X_s, \alpha^{*}_s)}{\sigma^2 X_s/2}-\frac{K h^K(X_s, \alpha^{*}_s)}{\lambda^{K,d}(X_s)} \mathrm{d}M_s \Big|
\end{align*}
The second and last terms converge towards $0$ by Assumption \ref{assumption:control} (ii), Theorem \ref{th:rescaling}, Proposition VI-6.12, Theorem VI-6.22 in \cite{jacod2013limit} and Theorem \ref{th:convergence_control}. Using Ito's isometry, Cauch-Schwarz inequality, Assumption \ref{assumption:control} (iii) together with Theorem \ref{th:rescaling} and Theorem \ref{th:convergence_control} (i) we get that the first term goes to $0$ in probability. Therefore $(\varepsilon^K_4)_{K\geq 0}$ converges towards $0$ in probability.\\

Thus we conclude that $(L_T^{K, \alpha^{K,*}})_{K\geq 0}$ converges toward $L^{\alpha^*}_T$ in probability.

\newpage

\appendix

\section{Spaces and notations}
\label{appendix:space}

\begin{itemize}
\item $L^p$ the set of real valued random variable $Z$ such that
$$
\|Z\|_{L_p}^p = \mathbb{E}[|Z|^p]<+\infty
$$ 
\item $\mathcal{S}_d^p$ is the set of $\mathbb{F}$-predictable $\mathbb{R}^d$ valued process $X$ such that
$$
\|X\|_{p}^p = \mathbb{E}[\underset{t\in [0, T]}{\sup} \|X_t\|^p ]<+\infty.
$$

\item For any $K\geq 0$ we consider the sets:

 \begin{itemize}
 \item $L^2(M^K)$ is the set of $\mathcal{F}^K$ predictable process $\mathbb{R}^2$ valued $Z$ such that
$$
\|Z\|_{L^2(M^K)}^2 = \mathbb{E}[\int_0^T |Z_s|^2\cdot \phi^K_s \mathrm{d}A^K_s ]<+\infty.
$$ 
\item $\mathbb{T}^K$ is the set of $\mathcal{F}^K_T$ measurable $\mathbb{R}$ valued random variable $\xi$ such that
$$
\|\xi\|_{\mathbb{T}^K}^2 = \mathbb{E}[e^{\beta A^K_T }|\xi|^2]<+\infty.
$$
\item $\mathbb{K}^K$ is the set of $\mathbb{F}^K$-optional $\mathbb{R}$ valued process $Y$ such that
$$
\|Y\|_{\mathbb{K}^K}^2 = \mathbb{E}[e^{\beta A^K_T }\underset{t\in [0, T]}{\sup}|Y_t|^2]<+\infty.
$$
\item $\mathbb{H}_2^K$ is the set of $\mathbb{F}^K$-predictable $\mathbb{R}^2$ valued process $Z$ such that
$$
\|Z\|_{\mathbb{H}^K}^2 = \mathbb{E}[\int_0^T e^{\beta A^K_t}Z_s^2 \cdot \phi^K_t \mathrm{d}A^K_t ]<+\infty \text{ with }Z^2 = (Z^2_1, Z^2_2).
$$
\item $\mathbb{H}_1^K$ is the set of $\mathbb{F}^K$-predictable $\mathbb{R}$ valued process $Y$ such that
$$
\|Y\|_{\mathbb{H}_1^K}^2 = \mathbb{E}[\int_0^T e^{\beta A^K_t}|Y_s^t|^2 \mathrm{d}A^K_t ]<+\infty.
$$
\item $\mathbb{S}^K$ is the set of pair $(Y, Z)\in \mathbb{H}_1^K\times  \mathbb{H}_2^{K}$, we note $\|(Y, Z)\|^2_{\mathbb{S}^K} = \|Y\|^2_{\mathbb{H}_1^K} + \|Z\|^2_{\mathbb{H}_2^K}$.
\end{itemize}

\item We also consider the sets related to the continuous model:
\begin{itemize}
 \item $L^2(M^X)$ is the set of $\mathcal{F}^X$ predictable process $\mathbb{R}$ valued $Z$ such that
$$
\|Z\|_{L^2(M^X)}^2 = \mathbb{E}[\int_0^T |Z_s|^2 \mathrm{d}A_s ]<+\infty.
$$ 
\item $\mathbb{T}$ is the set of $\mathcal{F}^X_T$ measurable $\mathbb{R}$ valued random variable $\xi$ such that
$$
\|\xi\|_{\mathbb{T}}^2 = \mathbb{E}[e^{\beta A_T }|\xi|^2]<+\infty.
$$
\item $\mathbb{K}$ is the set of $\mathbb{F}^X$-optional $\mathbb{R}$ valued process $Y$ such that
$$
\|Y\|_{\mathbb{K}}^2 = \mathbb{E}[e^{\beta A_T }\underset{t\in [0, T]}{\sup}|Y_t|^2]<+\infty.
$$
\item $\mathbb{H}$ is the set of $\mathbb{F}^X$-predictable $\mathbb{R}$ valued process $Z$ such that
$$
\|Z\|_{\mathbb{H}}^2 = \mathbb{E}[\int_0^T e^{\beta A_t}Z_s^2 \mathrm{d}A_t ]<+\infty.
$$
\item $\mathbb{S}$ is the set of pair $(Y, Z)\in \mathbb{K}\times  \mathbb{H}$, we note $\|(Y, Z)\|^2_{\mathbb{S}} = \|Y\|^2_{\mathbb{K}} + \|Z\|^2_{\mathbb{H}}$.
\end{itemize}

\item Finally we consider the sets:

\begin{itemize}
\item $L^p$ the set of real valued random variable $Z$ such that
$$
\|Z\|_{L_p}^p = \mathbb{E}[|Z|^p]<+\infty
$$ 
\item $\mathcal{S}_d^p$ is the set of $\mathbb{F}$-predictable $\mathbb{R}^d$ valued process $X$ such that
$$
\|X\|_{p}^p = \mathbb{E}[\underset{t\in [0, T]}{\sup} \|X_t\|^p ]<+\infty.
$$
\end{itemize}

\end{itemize}

\section{Change of measure for initial population}
\label{appendix:change_measure}

We consider $m\in \mathbb{R}_+^*$ and $n\in [0, m)$ and define the process
$$
Q^{K,n,m}_t = \int_0^t \sum_{i\in\{b,d\}} \frac{\lambda^{n,K,i}_s-\lambda^{m,K,i}_s}{\lambda^{m,K,i}_s}\mathbf{1}_{X^{m, k}_s>0}\mathrm{d}M^{K,i}_s.
$$
We have $|\Delta Q^{K,n,m}|\leq 1$ and therefore $\Delta Q^{K,n,m}\geq 1$. Moreover from Assumption \ref{assumption:model} we have for some constant $C$ positive
\begin{align*}
\langle Q^{K,n,m} \rangle_t & = \int_0^t \sum_{i\in\{b,d\}} \frac{|\lambda^{n,K,i}_s-\lambda^{m,K,i}_s|^2}{\lambda^{m,K,i}_s}\mathbf{1}_{X^{m, K}_s>0} \mathrm{d}s\\
&\leq \int_0^t \sum_{i\in\{b,d\}} \frac{C(K^2+K)|n-m|^2}{K^2 \underline{\eta}X^{m,K}_s } \mathbf{1}_{X^{m, K}_s>0} \mathrm{d}s\\
&\leq \int_0^t \sum_{i\in\{b,d\}} \frac{C(K^2+K)|n-m|^2}{K^2 \underline{\eta}X^{m,K}_{min} }  \mathrm{d}s,
\end{align*}
where $X^{m,K}_{min}>0$ is the lowest positive value that the process $X^{m,K}$ can take. Therefore by Theorem 2.4 in \cite{sokol2013optimal} the process $L^{n,m}$ is a uniformly integrable martingale.\\

Moreover according to Theorem III-3.11 in \cite{jacod2013limit} under the probability $\mathbb{P}^{K,m}$ for $i\in\{b,d\}$ the processes
$$
M^{K,m,i} - \langle Q^{K,n,m},M^{K,m,i} \rangle
$$
are local martingales. Finally we conclude since
\begin{align*}
M^{K,m,i}_t - \langle Q^{K,n,m},M^{K,m,i} \rangle_t &= N^{i}_t - \int_0^t \lambda^{m,K,i}_s - (\lambda^{n,K,i}_s - \lambda^{m,K,i}_s)\mathbf{1}_{X^{m,K}_s>0} \mathrm{d}s\\
&= N^{i}_t - \int_0^t \lambda^{n,K,i}_s \mathbf{1}_{X^{m,K}_s>0} + \lambda^{m,K,i}_s\mathbf{1}_{X^{m,K}_s=0} \mathrm{d}s\\
&= N^{i}_t - \int_0^t \lambda^{n,K,i}_s  \mathrm{d}s\\
&= M^{K,n,i}_t.
\end{align*}

\section{Admissibility of the controls in the toy model}
\label{appendix:toy_model}

\subsection{Discrete models}
\label{appendix:admissible_discrete}

We show that the control $\alpha^{K,*}$ is admissible. We have
$$
\lambda^{K,d,\alpha^{K,*}}_t = K X^{K}_{t-} (\mu + K\sigma^2) + K X^K_{t-}\big( -2a_K(t) + \frac{a_K(t)}{X^K_{t-}K}-\frac{b_K(t)}{X^K_{t-}}  \big)\mathbf{1}_{X^{K}_{t-}>0}.
$$
By \cite{jacod1975multivariate} the probability $\mathbb{P}^{K, \alpha^{K,*}}$ exists. We recall that we have chosen $T$ small enough such that $a_K$ is negative and $b_K$ positive on $[0, T]$. Hence we have
\begin{align*}
\lambda^{K,d,\alpha^{K,*}}_t & \geq KX^K_{t-} (\mu + K\sigma^2) + K \big( -2a_K(t)X^K_{t-} + \frac{a_K(t)}{K} - b_K(t)\big) \mathbf{1}_{X^K_{t-}>0} \\
&\geq X^K_{t-}K \Big( \mu -a_K(t)\mathbf{1}_{X^K_{t-}>0} + K\sigma^2 - K b_K(t) \Big).
\end{align*}
We can always assume that $T$ is small enough so that we can assume that for any $t\in [0, T]$, $\sigma^2 - b_K(t) > 0$. So $\lambda^{K,d,\alpha^{K,*}}$ is $\mathbb{P}^{K, \alpha^{K,*}}$ almost surely non negative and the control $\alpha^{K,*}$ is admissible.

\subsection{Continuous models}
\label{appendix:admissible_continuous}

We have
$$
X_t \alpha^*_t = \big( -2a(t) X_t - b(t)\big)\mathbf{1}_{X_t>0}.
$$
So the SDE
$$
\mathrm{d}X_t = X_t (\nu-\mu - \alpha^*_t) \mathrm{d}t + \sigma \sqrt{X_t} \mathrm{d}W_t
$$
writes
$$
\mathrm{d}X_t = \Big( X_t (\nu-\mu) - \big( -2a(t) X_t - b(t)\big) \mathbf{1}_{X_t>0} \Big) \mathrm{d}t + \sigma \sqrt{X_t} \mathrm{d}W_t,~X_0=x_0.
$$
Obviously this SDE admits a unique strong solution given by $Y_t\mathbf{1}_{ \underset{s\in [0, t]}{\inf}Y_s>0 }$ where $Y$ is the unique strong solution of 
$$
\mathrm{d}Y_t = \big( Y_t (\nu-\mu)  + 2a(t) Y_t + b(t) \big) \mathrm{d}t + \sigma \sqrt{Y_t} \mathrm{d}W_t,~Y_0 = x_0.
$$

\section{Feller property of the model}
\label{appendix:feller}

We consider a non negative real $x$. We obviously have that when $t\rightarrow 0$ the $X^{K, x}_t$ converges almost surely towards $x$. Now we consider a non negative sequence $(x_n)_{n\geq 0}$ that converges towards $x$ and show that for any $t>0$, $(X^{K,x_n}_t)_{n\geq 0}$ converges in law towards $X^{K,x}_t$. We fix $x_0$ larger than $x$ and any of the $x_n$ and $f$ a bounded continuous function on $\mathbb{R}_+$.\\

We write
$$
\mathbb{E}^{K,x_n}[f(X^{K,x_n}_t)] = \mathbb{E}^{K,x_0}[f(X^{K,x_n}_t)L^{K,x_n,x_0}_t] \text{ and }\mathbb{E}^{K,x}[f(X^{K,x}_t)] = \mathbb{E}^{K,x_0}[f(X^{K,x}_t)L^{K,x,x_0}_t],
$$
and
$$
|f(X^{K,x_n}_t)L^{K,x_n,x_0}_t - f(X^{K,x}_t)L^{K,x,x_0}_t| \leq |f(X^{K,x_n}_t) - f(X^{K,x}_t)|L^{K,x,x_0}_t +|f(X^{K,x_n}_t)||L^{K,x_n,x_0}_t - L^{K,x,x_0}_t|.
$$
The first term of the right hand side (RHS for short) goes to $0$ by dominated convergence. We can dominate the second on by
$$
\|f\|_{\infty} |L^{K,x_n,x_0}_t - L^{K,x,x_0}_t|
$$
that converges towards $0$ according to Scheff\'e's lemma. Therefore our model has the Feller property.

\section{Martingale representation with respect to $M^X$}
\label{appendix:martingale_representation.}

We show in this section that any $(\mathbb{F}^X, \mathbb{P})-$martingale has the representation property relative to $M^X$.\\

We set $\mathcal{H}=\mathcal{F}^X_0$ and $\mathbb{P}_0 = \varepsilon_{X_0 = x_0}$, \textit{i.e.} the probability measure on $\mathcal{H}$ such that that $\mathbb{P}_0(X_0=x_0) = 1$. For $\widetilde X$ a c\`adl\`ag process adapted to the filtration $\mathbb F^X$ and $B$ and $C$ two $\mathbb F^X$-predictable processes with finite variation such that $B_0 = C_0=0$ we recall the definition of the martingale problem associated with $(\mathcal H, \widetilde X)$ and ($\mathbb{P}_0$, $B$, $C$).

\begin{definition}[Definition III-2.6 in \cite{jacod2013limit}] A solution to the martingale problem associated with $(\mathcal H,\widetilde X)$ and $(\mathbb P_0,\widetilde B,C)$ is a probability measure $\mathbb Q$ on $(\Omega,\mathbb F^X)$ such that
\begin{itemize}
\item the restriction of $\mathbb Q$ to $\mathcal H$ equals $\mathbb P_0$,
\item $\widetilde X$ is a semi-martingale on $(\Omega,\mathbb F^X,\mathbb Q)$ with characteristics $(B,C)$.
\end{itemize}
We denote by $\mathbf{s}(\mathcal{H}, \widetilde X| \mathbb{P}_0; B, C)$ the set of solutions to this martingale problem.
\end{definition}

From this definition we see that the projection of $\mathbb{P}$ on $\mathbb{F}^K$ is a solution of $\mathbf{s}(\mathcal{H}, X| \mathbb{P}_0; D, A)$ where 
$$
D_t = \int_0^t f(X_s)\mathrm{d}s \text{ and } A_t = \int_0^t \sigma^2 X_s \mathrm{d}s.
$$
We have $M^X = X_t - D_t$ so that $M^X$ is a $\mathbb F^X$-adapted process and it makes sense to consider the set $\mathbf{s}(\mathcal{H}, M^X| \mathbb{P}_0; 0, A)$. We show that 
\begin{equation}
\label{egalite_sHXM}
\mathbf{s}(\mathcal{H}, M| \mathbb{P}_0; 0, A) = \mathbf{s}(\mathcal{H}, X| \mathbb{P}_0; D, A)
\end{equation}
and that $\mathbf{s}(\mathcal{H}, X| \mathbb{P}_0; D, A)$ is reduced to a singleton. This will be enough to conclude according to Theorem III-4.29 in \cite{jacod2013limit}.\\

Consider $\mathbb{Q}\in \mathbf{s}(\mathcal{H}, M| \mathbb{P}_0; 0, C)$. We have $X = M^X + A$ and since $D$ and $C$ are continuous process with finite variation, we deduce that under $\mathbb{Q}$ the characteristics of $X$ are $(A, C)$. Conversely, if $\mathbb{Q}\in \mathbf{s}(\mathcal{H}, X| \mathbb{P}_0; D, A)$ then by recalling that $M^X=X-D$ we obtain $\mathbb Q\in \mathbf{s}(\mathcal{H}, M| \mathbb{P}_0; 0, A)$. Hence, \eqref{egalite_sHXM} holds.\\

Since $(\mathbf{S})$ admits a unique strong solution it admits a unique solution in law (see Theorem IX-1.7 \cite{revuz2013continuous}). Therefore from Theorem III-2.26 in \cite{jacod2013limit} the set $\mathbf{s}(\mathcal{H}, X|\mathbb P_0;D,C)$ is reduced to a singleton. As a consequence of \eqref{egalite_sHXM}, the set $\mathbf{s}(\mathcal{H}, M^X| \mathbb{P}_0; 0, C)$ is also reduced to a singleton. Therefore, we deduce from Theorem III-4.29 that all $(\mathbb{F}^X, \mathbb{P})$-martingales have the representation property relative to $M^X$.

\section{Proof of Proposition \ref{prop:expo_moments}}
\label{proof:expo_moments}

For two non negative reals $\nu$ and $\mu$ we say that $N$ is a linear branching process with birth rate $\nu$ and intensity $\mu$ if it can be written as $N = N^b - N^d$ where $N^b$ and $N^d$ are two counting processes with respective intensity $\nu N$ and $\mu N$. This corresponds to a branching process as defined in Section III-3.3.1 in \cite{meleard2016modeles} with parameters $a = \nu + \mu$, $p_0 = \frac{\mu}{\nu + \mu}$ and $p_2 = \frac{\nu}{\nu + \mu}$.\\

To prove Proposition \ref{prop:expo_moments} we proceed in two steps:
\begin{itemize}
\item Step 1: We prove a result similar to Proposition \ref{prop:expo_moments} for linear branching process.
\item Step 2: We show that a under some assumption a population processes is almost surely dominated by a linear branching process.
\item Step 3: We conclude using the previous steps.
\end{itemize}

\subsection{Step 1: exponential moments for linear branching processes}
\label{proof:expo_moments_linear}

We consider $N$ a linear branching process with birth and death rate given by $\nu$ and $\mu$.\\

We define the function $F$ from $\mathbb{N} \times (\mathbb{R}_+^*)^2\times \mathbb{R}_+$ into $\overline{\mathbb{R}_+}$ by
$$
F(n, \beta, t) = \mathbb{E}_{n}[e^{\beta_1 \int_0^t N_s \mathrm{d}s + \beta_2 N_t}]
$$
where $\mathbb{E}_{n}$ is the expectation taker under the probability law that corresponds to initial condition  population of size $n$. We have the following lemma:

\begin{lemma}
\label{lemma:branching_a}
For any $\beta\in \mathbb{R}^2_+$ consider
$$
\gamma_{\nu, \mu, \beta} = \frac{\nu + \mu - \beta_1}{2\nu}, ~\phi_{\nu, \mu, \beta} = \nu \gamma_{\nu, \mu, \beta}^2 - \mu ,~\Delta_{\nu, \mu, \beta} = \sqrt{\frac{\nu}{\phi_{\nu, \mu, \beta}}}(e^{\beta_2} - \gamma_{\nu, \mu, \beta}),
$$
$$
\alpha_{\nu, \mu, \beta} = \log\big( \big| \frac{\Delta_{\nu, \mu, \beta}-1}{\Delta_{\nu, \mu, \beta}+1}\big| \big) \text{ and }t^*_{\nu, \mu, \beta} = \frac{-\alpha_{\nu, \mu, \beta}}{2\sqrt{\nu \phi_{\nu, \mu, \beta}}}.
$$
With those notations if $\beta$ satisfies $ \phi_{\nu, \mu, \beta}>0 \text{ and } \Delta_{\nu, \mu, \beta} > 1, $
we have for any $t\in [0, t^*_{\nu, \mu, \beta})$
$$
F(n, \beta, t) = \Big( \sqrt{\frac{\phi_{\nu, \mu, \beta}}{\nu}}\big( \frac{2}{1 - \text{exp}(\alpha_{\nu, \mu, \beta} + 2\sqrt{ \nu \phi_{\nu, \mu, \beta} }t)} - 1 \big) + \gamma_{\nu, \mu, \beta} \Big)^n.
$$
\end{lemma}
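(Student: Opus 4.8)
\textit{Overview and Step 1 (reduction to one particle).} My plan is to exploit the branching structure to reduce to a single initial individual, derive a scalar Riccati ODE for the corresponding Laplace transform by a first-jump argument, and integrate that ODE explicitly. Since $N$ is a linear branching process, the process started from $n$ individuals has the law of a sum $\sum_{i=1}^n N^{(i)}$ of $n$ independent one-particle copies, and this decomposition is simultaneously valid for $N_t=\sum_i N^{(i)}_t$ and for $\int_0^t N_s\,\mathrm{d}s=\sum_i\int_0^t N^{(i)}_s\,\mathrm{d}s$. Factorizing $\exp(\beta_1\int_0^t N_s\,\mathrm{d}s+\beta_2 N_t)$ over the subpopulations and using independence gives $F(n,\beta,t)=h(t)^n$ with $h(t):=F(1,\beta,t)$, so it suffices to identify $h$ with the bracketed expression of the statement.

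\textit{Step 2 (Riccati equation).} Conditioning the one-particle process on its first jump --- which occurs at an $\mathrm{Exp}(\nu+\mu)$ time and is a birth (two particles) with probability $\nu/(\nu+\mu)$ or an extinction with probability $\mu/(\nu+\mu)$ --- and invoking the Markov and branching properties together with $F(0,\beta,\cdot)\equiv 1$, I obtain, after multiplying by $e^{(\nu+\mu-\beta_1)t}$,
$$
e^{(\nu+\mu-\beta_1)t}h(t)=e^{\beta_2}+\int_0^t e^{(\nu+\mu-\beta_1)r}\big(\nu h(r)^2+\mu\big)\,\mathrm{d}r,
$$
so that, wherever $h$ is finite, $h$ is $C^1$ and solves the Riccati equation $h'(t)=\nu h(t)^2-(\nu+\mu-\beta_1)h(t)+\mu$ with $h(0)=e^{\beta_2}$. (As stated this step is only formal, since a priori $h$ could be infinite; see the last paragraph.)

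\textit{Step 3 (explicit integration).} Writing $\gamma,\phi,\Delta,\alpha,t^*$ for $\gamma_{\nu,\mu,\beta}$, $\phi_{\nu,\mu,\beta}$, etc., one has $\nu+\mu-\beta_1=2\nu\gamma$ and $\nu\gamma^2-\mu=\phi$, so the equation reads $h'=\nu(h-\gamma)^2-\phi$. With $w:=h-\gamma$ this becomes $w'=\nu w^2-\phi$, $w(0)=e^{\beta_2}-\gamma$. Under $\phi>0$ put $k:=\sqrt{\phi/\nu}>0$, so $w'=\nu(w-k)(w+k)$; the hypothesis $\Delta>1$ says exactly that $w(0)=k\Delta>k$, hence $w$ stays $>k$ and increasing with $\tfrac{w-k}{w+k}\in(0,1)$. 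Separating variables and using partial fractions gives
$$
\log\frac{w(t)-k}{w(t)+k}=2\sqrt{\nu\phi}\,t+\alpha,
$$
the constant being fixed by $\tfrac{w(0)-k}{w(0)+k}=\tfrac{\Delta-1}{\Delta+1}$. The right-hand side reaches $0$ exactly at $t^*$, which is therefore the blow-up time of $w$; for $t\in[0,t^*)$, solving for $w$ yields $w(t)=k\,\tfrac{1+E(t)}{1-E(t)}$ with $E(t):=\exp(\alpha+2\sqrt{\nu\phi}\,t)\in(0,1)$, and since $\tfrac{1+E}{1-E}=\tfrac{2}{1-E}-1$ this is precisely $h(t)=w(t)+\gamma=\sqrt{\phi/\nu}\big(\tfrac{2}{1-\exp(\alpha+2\sqrt{\nu\phi}\,t)}-1\big)+\gamma$, which combined with Step 1 is the claimed formula.

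\textit{Main obstacle.} The genuinely delicate point is the implicit finiteness used in Step 2. I would close this by a verification argument: letting $h^*$ be the explicit solution of the Riccati ODE constructed in Step 3 (defined on $[0,t^*)$), the very fact that $h^*$ solves that ODE makes $s\mapsto e^{\beta_1\int_0^s N_u\,\mathrm{d}u}\,h^*(t-s)^{N_s}$ a positive local --- hence super --- martingale on $[0,t]$ for each $t<t^*$; letting $s\uparrow t$ and applying Fatou's lemma gives $h(t)\le h^*(t)<\infty$, after which the first-jump identity of Step 2 is rigorous and, by uniqueness for the ODE, forces $h=h^*$ on $[0,t^*)$. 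Equivalently, localizing that supermartingale at the hitting times of large levels of $N$ yields the identity directly. Everything else is elementary.
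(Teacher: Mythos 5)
Your proof is correct, and its skeleton coincides with the paper's: reduce to a single ancestor via the branching decomposition $F(n,\beta,t)=F(1,\beta,t)^n$, derive the Riccati equation $h'=\nu h^2-(\nu+\mu-\beta_1)h+\mu$, $h(0)=e^{\beta_2}$, by conditioning on the first jump, and integrate it explicitly through $w=h-\gamma$ and separation of variables (your algebra here matches the paper's, including the identification of $\alpha$, $t^*$ and the blow-up time). The genuine difference is how you close the delicate point you correctly isolate, namely the a priori finiteness of $h(t)=F(1,\beta,t)$ for $t<t^*$. The paper stops the population at the hitting times $T_n$ of level $n$, so that the stopped functionals $F_n(\beta,\cdot)$ are finite and satisfy the first-jump relation only as an integral \emph{inequality}; it then compares $F_n$ with the explicit ODE solution via a Gr\"onwall-type comparison and lets $n\to\infty$ by monotone convergence, after which the exact renewal identity and ODE uniqueness give $h=f_{\mu,\nu,\beta}$. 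You instead run a verification argument: the explicit solution $h^*$ turns $s\mapsto e^{\beta_1\int_0^s N_u\,\mathrm{d}u}\,h^*(t-s)^{N_s}$ into a positive local martingale (the generator computation indeed kills the drift precisely because $h^*$ solves the Riccati equation, and $h^*$ is increasing from $e^{\beta_2}>0$, hence positive), so the supermartingale inequality yields $h(t)\le h^*(t)$ directly, and the renewal identity plus ODE uniqueness finish as in the paper. Your route buys a cleaner finiteness bound without the stopped integral inequality and the comparison lemma, at the price of justifying the local-martingale property (jump It\^o formula with localization at the hitting times of $N$, which is routine); the paper's route is more elementary in that it only manipulates bounded stopped quantities. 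Either way the conclusion and the explicit formula agree, so your proposal stands as a valid alternative proof.
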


Note that since $(0, 0)$ satisfies the above conditions, they also hold for $\beta_1$ and $\beta_2$ small enough.

\begin{proof}[Proof of Lemma \ref{lemma:branching_a}]

Consider a population starting with one individual. We call $\tau$ the lifetime of this particle and $C=1$ or $2$ the size his offspring. Since all particles are independent and follow the same law we can consider that:
\begin{equation}
\label{eq:branching_a}
e^{\beta_1 \int_0^t N_s \mathrm{d}s + \beta_2 N_t } = \mathbf{1}_{\tau > t}e^{\beta_1 t + \beta_2} + \mathbf{1}_{\tau \leq t} e^{\beta_1 \tau } \prod_{i = 1}^{C} e^{\beta_1 \int_{\tau}^t N^{(i)}_{s-\tau} \mathrm{d}s + \beta_2N^{(i)}_{t-\tau}}
\end{equation}
where $(N^{(i)})_{1\leq i \leq 2}$ are independent copies of $N$.\\

Consider the stopping times 
$$
T_n = \inf \{s>0 \text{ s.t. } N_s=n \}\text{ and }T_n^{(i)}=\inf \{s>0 \text{ s.t. } N^{(i)}_s=n \} \text{ for } i=1,2.
$$
From Equation \eqref{eq:branching_a} we get
\begin{equation*}
e^{\beta_1 \int_0^t N^{T_n}_s \mathrm{d}s + \beta_2 N^{T_n}_t} \leq \mathbf{1}_{\tau > t}e^{\beta_1 t + \beta_2} + \mathbf{1}_{\tau \leq t} e^{\beta_1 \tau } \prod_{i = 1}^{C} e^{\beta_1 \int_{\tau}^t N^{(i)T_n^{(i)}}_{s-\tau} \mathrm{d}s + \beta_2N^{(i)T_n^{(i)}}_{t-s}}
\end{equation*}
and taking the average we have
$$
F_n(\beta, t) \leq    e^{-(\nu+\mu)t}e^{\beta_1 t + \beta_2} + \int_0^t (\nu + \mu)e^{-(\nu+\mu)s}  e^{\beta_1 s } \Big( \frac{\nu}{\nu + \mu} F^2_n(\beta, t-s)+ \frac{\mu}{\nu + \mu}   \Big)\mathrm{d}s.
$$
where $F_n(\beta, t) = \mathbb{E}_1[e^{\beta_1 \int_0^t N^{T_n}_s \mathrm{d}s + \beta_2 N^{T_n}_t}]$. We therefore consider the following ODE:
$$
(R)_{\nu, \mu, \beta}: f' = \nu f^2 - (\nu + \mu - \beta_1 )f + \mu,~f(0) = e^{\beta_2}.
$$
We show that $(R)_{\nu, \mu, \beta}$ has a unique maximal solution defined on $t\in [0, t^*_{\nu, \mu, \beta})$ by
$$
f_{\mu, \nu,\beta}(t) = \sqrt{\frac{\phi_{\nu, \mu, \beta}}{\nu}}\big( \frac{2}{1 - \text{exp}(\alpha_{\nu, \mu, \beta} + 2\sqrt{ \nu \phi_{\nu, \mu, \beta} }t)} - 1 \big) + \gamma_{\nu, \mu, \beta}.
$$

Using the change of variable $g = f - \gamma_{\nu, \mu, \beta}$, the ODE $(R)_{\nu, \mu, \beta}$ is equivalent to
$$
(R)_{\nu, \mu, \beta}':~g' = \phi_{\nu, \mu, \beta} (\frac{\nu}{\phi_{\nu, \mu, \beta}}g^2-1).
$$
By Cauchy-Lispchitz theorem this ODE admits a maximal solution $g$. By hypothesis on $\beta$ we have
$$
\phi_{\nu, \mu, \beta}\big( \frac{\nu}{\phi_{\nu, \mu, \beta}}g^2(0)-1 \big) = \phi_{\nu, \mu, \beta} (\Delta^2_{\nu, \mu, \beta}-1) >0.
$$
So for all $t$ such that $\frac{\nu}{\phi_{\nu, \mu, \beta}}g^2(t)-1>0$ we can write
\begin{equation}
\label{eq:branching_d}
\frac{g'(t)}{\frac{\nu}{\phi_{\nu, \mu, \beta}}g^2(t)-1} = \phi_{\nu, \mu, \beta}.
\end{equation}
We recognize the derivative of 
$$
x \rightarrow \frac{1}{2}\sqrt{\frac{\phi_{\nu, \mu, \beta}}{\nu}}\log\big( \frac{\sqrt{\frac{\nu}{\phi_{\nu, \mu, \beta}}}x-1}{\sqrt{\frac{\nu}{\phi_{\nu, \mu, \beta}}}x + 1} \big).
$$
So integrating on both sides of \eqref{eq:branching_d} we have
$$
\log\big( \frac{\sqrt{\frac{\nu}{\phi_{\nu, \mu, \beta}}}g(t)-1}{\sqrt{\frac{\nu}{\phi_{\nu, \mu, \beta}}}g(t) + 1} \big) = \alpha_{\nu, \mu, \beta} + 2\sqrt{\nu \phi_{\nu, \mu, \beta}}t.
$$
Therefore it is then easy to show that for any $t<t^*_{\nu, \mu, \beta}$ we have
$$
g(t)+\gamma_{\nu, \mu, \beta} = \sqrt{\frac{\phi_{\nu, \mu, \beta}}{\nu}}\big( \frac{2}{1 - \text{exp}(\alpha_{\nu, \mu, \beta} + 2\sqrt{ \nu \phi_{\nu, \mu, \beta} }t)} - 1 \big)+\gamma_{\nu, \mu, \beta}.
$$
Reciprocally it is easy to show that this function is a maximal solution of $(R)_{\nu, \mu, \beta}$ defined on $[0, t^*_{\nu, \mu, \beta})$. \\

The function $F_n(\beta, \cdot)$ being continuous a direct application of the Gr\"onwall lemma gives that for any $ t\in [0, t^*_{\mu, \nu,\beta})$, $F_n(\beta, t) \leq f_{\mu, \nu,\beta}(t)$. By monotone convergence we obtain that $F(1,\beta, t)$ is finite and taking the average in Equation \eqref{eq:branching_a} we obtain that $F(1, \beta, \cdot)$ is solution of $(R)_{\mu, \nu,\beta}$ therefore we have
$F(1, \beta,t) = f_{\mu, \nu,\beta}(t)$.\\

Finally if we consider a population $N$ starting with $n$ individual we can consider that
$$
N =  \sum_{i = 1}^n N^{(i)}
$$
where $(N^{(i)})_{1\leq i \leq n}$ are independent copies of the branching process starting with one individuals. Therefore for $t<t^*_{\nu, \mu, \beta}$ we get $F(n, \beta, t) = F(1, \beta, t)^n$ which concludes the proof.
\end{proof}

We now consider a sequence of branching process $(N^K)_{K\geq 0}$ with initial condition $K n$ and parameters 
$$
\mu^K = \mu + a K \text{ and } \nu^K = \nu + a K. 
$$
We consider $\beta_K=(\beta_1/K,\beta_2/K)$ such that $(\nu-\mu)^2>4a\beta_1$ and note
$$
\Lambda_{\infty} = \big( \frac{\nu - \mu}{2a} + \beta_2 \big) \frac{2a}{\sqrt{(\nu - \mu)^2 - 4a\beta_1}},~\eta = \frac{\sqrt{(\nu - \mu)^2 - 4a \beta_1}}{2},
$$
$$
 \alpha_{\infty} := \log( \big| \frac{\Lambda_{\infty} - 1}{\Lambda_{\infty} + 1} \big| )\text{ and }t_{\infty}^* = -\frac{\alpha_{\infty}}{2\eta}.
$$
We assume that $\beta_1$ and $\beta_2$ satisfy $ \Lambda_{\infty}>1 $. Those conditions imply that for $K$ large enough $\beta_K$ satisfies the assumption in Lemma \ref{lemma:branching_a}.\\ 

To lighten the notations we use the under-script $K$ instead of $(\nu_K, \mu_K, \beta_K)$. One can easily show the following convergence or equivalence: 
\begin{equation}
\label{eq:cv_a}
1-\gamma_K \sim \frac{\nu - \mu}{2a}\frac{1}{K},~ \frac{\phi_K}{\nu_K} \sim \frac{1}{K^2}\frac{(\nu - \mu)^2-4a\beta_1}{4a^2},
\end{equation}
\begin{equation}
\label{eq:cv_b}
\underset{K\rightarrow + \infty}{\lim } \Lambda_K = \Lambda_{\infty},~\underset{K\rightarrow + \infty}{\lim } \sqrt{\nu_K \phi_K} = \eta,~\underset{K\rightarrow + \infty}{\lim }\alpha_K = \alpha_{\infty} \text{ and } \underset{K\rightarrow + \infty}{\lim } t^*_K = t_{\infty}^*.
\end{equation}
The convergence of the sequence $(t_K)_{K\geq 0}$ implies that for any $t<t^*_{\infty}$ and $K$ large enough $F(nK, \beta_K, t)$ is finite. Moreover from \eqref{eq:cv_a} and \eqref{eq:cv_b} we get that the sequence $\big( F(nK, \beta_K, t)\big)_{K\geq 0}$ converges. More precisely it is easy to check that for any $t<t_{\infty}^*$ we have
$$
\underset{K\rightarrow + \infty}{\lim } F(nK, \beta_K, t) = e^{n\Psi(\beta, t)}
$$
where
$$
\Psi(\beta, t) = \frac{\mu - \nu}{2a} + \frac{\eta}{a} \big( \frac{2}{1-\text{exp}(\alpha_{\infty} + 2\eta t )}-1 \big).
$$

Therefore we deduce that there exists $K_0\in \mathbb{N}$, $T>0$ and $\beta\in \mathbb{R}^2_+$ such that for any $s\in [0, t)$ we have

\begin{equation}
\label{lemma:branching_c}
\underset{K\geq K_0}{\sup}\mathbb{E}_{n K}[e^{ \frac{\beta_1}{K}\int_0^s N^K_u\mathrm{d}u + \frac{\beta_2}{K}N^K_s}] < +\infty.
\end{equation}

\subsection{Step 2: domination of $X^K$ by linear process}
\label{proof:expo_moments_dmination}

We begin by showing the following lemma.
\begin{lemma}
\label{lemma:thining}
Consider two functions $g_d$ and $g_b$ from $\mathbb{R}_+$ into $\mathbb{R}_+$ such that
\[g_b(x) \leq \nu x, \; g_d(x) \geq \mu x,~g_d(0) = 0.\]
We consider two counting processes $N^{d}$ and $N^b$ with respective intensity $ g_d(N) $ and $g_b(N)$ where $N = N^b -N^d$. Then up to an extension of the probability space there exists a linear branching process $\tilde{N}$ with birth rate $\nu$ and death rate $\mu$.
\end{lemma}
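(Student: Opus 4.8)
The plan is to realise both $N$ and the dominating process $\tilde N$ as solutions of SDEs driven by the \emph{same} pair of Poisson random measures, and then to compare them pathwise. \textbf{Step 1 (common driving noise).} First I would extend the probability space so as to carry two independent Poisson random measures $Q^b(\mathrm{d}s,\mathrm{d}u)$ and $Q^d(\mathrm{d}s,\mathrm{d}u)$ on $\mathbb{R}_+\times\mathbb{R}_+$, each with intensity $\mathrm{d}s\,\mathrm{d}u$, such that
\[
N^b_t=\int_{[0,t]\times\mathbb{R}_+}\mathbf{1}_{\{u\le g_b(N_{s-})\}}\,Q^b(\mathrm{d}s,\mathrm{d}u),\qquad N^d_t=\int_{[0,t]\times\mathbb{R}_+}\mathbf{1}_{\{u\le g_d(N_{s-})\}}\,Q^d(\mathrm{d}s,\mathrm{d}u).
\]
This is the standard Poisson embedding of a counting process with a prescribed intensity: one marks the jump times of $N^b$ (resp. $N^d$) with independent uniforms on $[0,g_b(N_{s-})]$ (resp. $[0,g_d(N_{s-})]$) and completes the atoms above the intensity graph with an independent Poisson measure; see the Poisson representation of birth/death processes used in \cite{bansaye2015stochastic}.

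\textbf{Step 2 (construction of $\tilde N$).} On the same space I would then let $\tilde N=\tilde N^b-\tilde N^d$, with $\tilde N_0=N_0$, be the unique solution of
\[
\tilde N^b_t=N_0+\int_{[0,t]\times\mathbb{R}_+}\mathbf{1}_{\{u\le\nu\,\tilde N_{s-}\}}\,Q^b(\mathrm{d}s,\mathrm{d}u),\qquad \tilde N^d_t=\int_{[0,t]\times\mathbb{R}_+}\mathbf{1}_{\{u\le\mu\,\tilde N_{s-}\}}\,Q^d(\mathrm{d}s,\mathrm{d}u).
\]
Since the rates $\nu\tilde N_{s-}$ and $\mu\tilde N_{s-}$ are finite and of linear growth, this equation is solved recursively on its jump times and admits a unique global solution; moreover $\tilde N\ge0$ because its death rate vanishes at $0$, and $\mathbb{E}[\tilde N_t]=N_0\,e^{(\nu-\mu)t}<\infty$, so the jump times do not accumulate and $\tilde N$ does not explode. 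By construction $\tilde N$ has birth intensity $\nu\tilde N$ and death intensity $\mu\tilde N$, i.e. it is a linear branching process with birth rate $\nu$ and death rate $\mu$.

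\textbf{Step 3 (comparison).} It remains to prove $N_t\le\tilde N_t$ for all $t\ge0$. Let $0=\tau_0<\tau_1<\cdots$ be the successive jump times of the bivariate process $(N,\tilde N)$; they form a locally finite set since neither coordinate explodes. The difference $D:=\tilde N-N$ is constant between two consecutive jump times, so it suffices to show that $D_{\tau_k-}\ge0$ implies $D_{\tau_k}\ge0$. If $\tau_k$ is an atom $(\tau_k,u)$ of $Q^b$ at which $N$ jumps, then $u\le g_b(N_{\tau_k-})\le\nu N_{\tau_k-}\le\nu\tilde N_{\tau_k-}$, so $\tilde N$ jumps up as well and $D$ is unchanged; if only $\tilde N$ jumps, $D$ increases. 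If $\tau_k$ is an atom of $Q^d$ and $D_{\tau_k-}\ge1$, then $D$ decreases by at most $1$ and stays $\ge0$; if $D_{\tau_k-}=0$, then whenever $\tilde N$ jumps down, i.e. $u\le\mu\tilde N_{\tau_k-}=\mu N_{\tau_k-}\le g_d(N_{\tau_k-})$, the process $N$ jumps down too, so $D$ stays equal to $0$. Since $D_0=0$, this induction gives $D_t\ge0$ for all $t$, hence $N\le\tilde N$ almost surely.

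\textbf{Main obstacle.} The delicate point is Step 1: realising $N$ and $\tilde N$ as driven by a common Poisson noise on a suitable extension of the space. Once this representation is in place, the construction of $\tilde N$ and the comparison in Step 3 are routine case analyses, the only further care being the non-explosion of $\tilde N$ (hence of $N$), which follows from the linear growth of the intensities.
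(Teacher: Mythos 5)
Your proof is correct and, like the paper's, is at heart a coupling by thinning, but the implementation is genuinely different. The paper builds the two processes jointly as a single multivariate point process with four marks (existence via \cite{jacod1975multivariate}, identification of the intensities via Ogata's thinning \cite{ogata1981lewis}): by construction every birth of the dominated process is simultaneously a birth of the dominating one and every death of the dominating process is simultaneously a death of the dominated one, so domination is immediate, the price being that one produces a coupled pair containing a copy in law of the population process. You instead keep the given $N$, represent it on an enlarged space as driven by Poisson random measures (the inverse Poisson embedding), solve for $\tilde N$ against the same noise, and prove $N\le\tilde N$ by induction over the jump times; this dominates the actual given process, which is the form in which the lemma is used in the proof of Proposition \ref{prop:expo_moments}. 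Your handling of deaths is also more robust: you only need $\mu\tilde N_{s-}\le g_d(N_{s-})$ on the diagonal $\{\tilde N_{s-}=N_{s-}\}$, where it follows from $g_d(x)\ge\mu x$, since off the diagonal an unmatched death of $\tilde N$ merely reduces the gap by one; forcing every death of the dominating process to coincide with a death of the dominated one would instead require the thinning probability $\mu\tilde N_{t-}/g_d(N_{t-})$ to stay below one, which is not automatic when $\tilde N_{t-}>N_{t-}$. The cost of your route is Step 1: representing a given counting process with predictable intensity as a thinned Poisson measure on an extension is standard but is the converse of the construction used in \cite{bansaye2015stochastic}, so it should be isolated as a lemma with a precise reference (it follows, as you sketch, by adjoining independent uniform marks below the intensity graph and an independent Poisson measure above it, and checking the compensator of the superposed measure is deterministic). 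Two small repairs: obtain $\mathbb{E}[\tilde N_{t\wedge T_n}]\le N_0e^{\nu t}$ by stopping at the first time $T_n$ at which $\tilde N$ reaches level $n$ and applying Gr\"onwall, then let $n\to\infty$, rather than quoting the moment formula to rule out explosion; and note that the independence of $Q^b$ and $Q^d$ is never used in the comparison, which is convenient since it could fail if $N^b$ and $N^d$ were allowed to jump simultaneously.
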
 
\begin{proof}[Proof of Lemma \ref{lemma:thining}]

We proceed by thinning. We consider a multivariate point process $X$ with values in $\mathcal{E} = \{b_1, b_2, d_1, d_2\}$ and let $p$ be its corresponding random measure. For any $e\in \mathcal{E}$ we define:
$$
N^{e} = \int_0^t \int_{\mathcal{E}} \mathbf{1}_{x = e} p(\mathrm{d}x,\mathrm{d}t).
$$
For $i=1$ and $2$ we note $N^{i} = N^{b_i} - N^{d_i} $ and 
$$
\lambda^{b_1} = \mu N^1,~\lambda^{d_1} = \nu N^1,~\lambda^{b_2} = g_b(N^2)\text{ and }\lambda^{d_2} = g_d(N^2).
$$
We set $ p(\mathrm{d}x, \mathrm{d}t) = m_t(\mathrm{x})\lambda_t \mathrm{d}t $ where $ \lambda_t = \lambda^{b_2}_t + \lambda^{d_1}_t $. The measure $m_{t}$ is defined by:
$$
m_t(b_1) = \varepsilon^{1}_{t} \delta_{b_1},\;m_t(b_2) = \varepsilon^1_t \varepsilon^2_t \delta_{b_2},\;
m_t(d_2) = (1 - \varepsilon^{1}_{t}) \delta_{d_2},\;m_t(d_1) = (1 - \varepsilon^{1}_{t}) \varepsilon^3_t \delta_{d_1}
$$
where $(\varepsilon^i_t)_{1\leq i \leq 3}$ are Bernoulli random variable with parameters
$$
p^1_t = \frac{\lambda^{b_1}_t}{\lambda_t},~ p^2_t = \frac{\lambda^{b_2}_t}{\lambda^{b_1}_t} \text{ and }p^3_t = \frac{\lambda^{d_1}_t}{\lambda^{d_2}_t} .
$$

For existence of the process $X$ see \cite{jacod1975multivariate}. Basically we get that the when there is an event either $N^{b_1}$ $N^{d_2}$ jump. If $N^{b_1}$ has jumped, then $N^{b_2}$ may jump or not and If $N^{d_2}$ has jumped, then $N^{d1}$ may jump or not. So almost surely we have $N^1 \geq N^2$. According to Proposition 1. in \cite{ogata1981lewis} for any $e\in \mathcal{E}$ the process $N^e$ is a counting process with intensity $\lambda^e$. This concludes the proof of the Lemma.
\end{proof}

\subsection{Step 3: conclusion}

As consequence of Lemma \ref{lemma:thining} for any $K$ up to an extension of the probability space we can consider that there exists a branching process with birth and death rate given by 
$$
\nu_K = \nu + \frac{\sigma^2}{2} K \text{ and }\mu_K = \frac{\sigma^2}{2}K
$$
that dominates $X^K$ almost surely. So according to Equation \eqref{lemma:branching_c} in Step 2, there exists some positive constants $\beta_0$, $T$ and $K_0$ such that for any $s\leq T$ 
$$
\underset{K\geq K_0}{\sup}\mathbb{E}[\text{exp}(\beta_0 \int_0^s X^K_u\mathrm{d}u + \beta_0 X^K_s)]<+\infty.
$$
This conclude the proof of the proposition.

\paragraph{Acknowledgment: }The authors are gratefully thankful to  Vincent Bansaye, Mathieu Rosenbaum, Sylvie M\'el\'eard, Dylan Possama\"i and Nizar Touzi for many interesting discussions. The authors gratefully acknowledge the financial supports of the ERC Grant 679836 Staqamof, the Chaires Analytics and Models for Regulation and Financial Risk.

\bibliographystyle{alpha}
\newcommand{\etalchar}[1]{$^{#1}$}

\end{document}